\newtheorem{defn}{Definition}[section]
\newtheorem{rem}[defn]{Remark}
\newtheorem{prop}[defn]{Proposition}
\newtheorem{lem}[defn]{Lemma}
\numberwithin{equation}{section}
\newtheorem{theorem}{Theorem}[section]
\newtheorem{cor}[defn]{Corollary}
\newtheorem{assump}[defn]{Assumption}
\def\G{\mathcal{G}}
         \def\vf{\bv{f}} \def\vF{\bv{F}}  \def\vx{\bv{x}}   \def\vy{\bv{y}}   \def\vX{\bv{X}}   \def\vv{\bv{v}} \def\vw{\bv{w}} 
    \def\vs{\bv{s}}
\def\vn{\bv{n}}
\def\grad{\bv{\nabla}}  
\newcommand{\bv}[1] {\boldsymbol{{#1}}}
\begin{document}
\title{Minimal invariant regions and minimal globally attracting regions for toric differential inclusions}
\author[1]{Yida Ding}
\author[2]{Abhishek Deshpande}
\author[3]{Gheorghe Craciun}
\affil[1]{Department of Mathematics, University of Wisconsin-Madison, {\tt yding54@wisc.edu}.}
\affil[2]{Department of Mathematics, University of Wisconsin-Madison, {\tt deshpande8@wisc.edu}.}
\affil[3]{Department of Mathematics and Department of Biomolecular Chemistry, University of Wisconsin-Madison, {\tt craciun@math.wisc.edu}.}

\maketitle
\begin{abstract}
{\em Toric differential inclusions} occur as key dynamical systems in the context of the {\em Global Attractor Conjecture}. We introduce the notions of minimal invariant regions and minimal globally attracting regions for toric differential inclusions. We describe a procedure for explicitly constructing the minimal invariant and minimal globally attracting regions for two-dimensional toric differential inclusions. In particular, we obtain invariant regions and globally attracting regions for two-dimensional weakly reversible or endotactic dynamical systems (even if they have time-dependent parameters). %Under some mild additional assumptions, we find that the minimal invariant region coincides with the minimal globally attracting region for the dynamical systems that we consider. 
\end{abstract}

\section{Introduction}
A wide range of mathematical models in biology, chemistry, physics, and engineering are governed by interactions between various populations. Often these systems can be represented by a set of differential equations on the positive orthant with polynomial or power-law right-hand sides, i.e., they have the form
\begin{equation}\label{eq:power_law_dyn_system}
\frac{d\vx}{dt}=\sum\limits_{i=1}^m  {\vx}^{\vs_i} \vv_i
\end{equation}
where $\vx = (x_1, x_2, ..., x_n) \in \mathbb{R}^n_{>0}$, $\vs_i, \vv_i\in\mathbb{R}^n$, and $\vx^{\vy      }:=\vx_1^{y_{1}}\vx_2^{y_{2}}...\vx_n^{y_{n}}$. A key dynamical property of such systems is \emph{persistence} which essentially means that no species can go extinct. In particular, a solution $\vx(t)$ of~(\ref{eq:power_law_dyn_system}) is said to be \emph{persistent} if for any initial condition $\vx(0)\in\mathbb{R}^n_{>0}$, we have $\displaystyle\liminf_{t\to\infty}x_i(t)>0$ for $i=1,2,...,n$. The \emph{Persistence Conjecture} says that dynamical systems generated by weakly reversible reaction networks are persistent~\cite{craciun2013persistence}. This conjecture is related to the \emph{Global Attractor Conjecture} which says that there exists a unique globally attracting equilibrium for complex balanced dynamical systems (up to linear conservation laws)~\cite{craciun2009toric}. Many special cases of the Global Attractor Conjecture have been proved~\cite{anderson2011proof,craciun2013persistence,gopalkrishnan2014geometric,pantea2012persistence}. Recently, a proof of the Global Attractor Conjecture in the fully general case was proposed~\cite{craciun2015toric}. A key tool in this proof is the embedding of weakly reversible dynamical systems into \textit{toric differential inclusions}, which are piecewise constant differential inclusions possessing a rich geometric structure~\cite{craciun2015toric,craciun2019polynomial,craciun2019endotactic}. It is known that positive solutions of toric differential inclusions are contained in some specific invariant regions~\cite{craciun2015toric}. In this paper, we focus on two-dimensional toric differential inclusions. We give an explicit procedure for constructing the minimal invariant regions and minimal globally attracting regions for toric differential inclusions. 
%We show that the minimal invariant region is also the minimal globally attracting region for the toric differential inclusion.
In particular, we can interpret our results as follows: two-dimensional complex balanced systems are known to have {\em globally attracting points} located in the strictly positive quadrant; similarly, the corresponding toric differential inclusions have {\em globally attracting compact sets}, which are also subsets of the strictly positive quadrant, and can be characterized in detail. 

This paper is structured as follows: In Section~\ref{sec:E_graph} we introduce reaction networks as Euclidean embedded graphs, and we define the notions of persistence and permanence for dynamical systems generated by reaction networks. In Section~\ref{sec:cones} we rigorously define a polyhedral cone, polyhedral fan and {\em toric differential inclusions}. We also define \textit{minimal invariant regions} and \textit{minimal globally attracting regions} for toric differential inclusions. In Section~\ref{sec:construction}, we present a procedure for constructing the minimal invariant region for a toric differential inclusion $\mathcal{T}_{\mathcal{F},\delta}$, which we denote by $\mathcal{M}_{\mathcal{F},\delta}$. In Section~\ref{sec:min_invariant_region}, we give a proof of correctness for our procedure of constructing the minimal invariant region for a toric differential inclusion. In Section~\ref{sec:min_glob_region}, we show that the region $\mathcal{M}_{\mathcal{F},\delta}$ is also the minimal globally attracting region for a toric differential inclusion. We conclude by summarizing our results and give possible directions for future work.

\section{Euclidean embedded graphs, Persistence, Permanence}\label{sec:E_graph}

A reaction network is a directed graph $\G=(V,E)$ called the Euclidean embedded graph~\cite{craciun2015toric,craciun2019polynomial,craciun2019endotactic}, where $V\subset\mathbb{R}^n$ is the set of vertices and $E$ is the set of edges corresponding to the reactions in the network. We will abbreviate the Euclidean embedded graph by an E-graph. Note that if $\vy,\vy'\in V$ are two distinct vertices of the E-graph $\G$, then $\vy\rightarrow \vy' \in E$ means that there is a directed edge from $\vy$ to $\vy'$. An E-graph $\mathcal{G}=(V,E)$ is called \textit{reversible} if $\vy\rightarrow \vy'\in E$ implies $\vy'\rightarrow \vy\in E$. An E-graph $\mathcal{G}=(V,E)$ is called \textit{weakly reversible} if every edge $\vy\rightarrow \vy'\in E$ is part of some directed cycle. An E-graph $\mathcal{G}=(V,E)$ is called \textit{endotactic} if for every $\vv\in\mathbb{R}^n$ with $\vv\cdot(\vy'-\vy)>0$ for some $\vy\rightarrow \vy'\in E$, there exists a $\tilde{\vy}\rightarrow \tilde{\vy}'\in E$ such that $\vv\cdot(\tilde{\vy}' - \tilde{\vy})<0$ and $\vv\cdot\tilde{\vy} > \vv\cdot \vy$. Given ${\vx}^*\in\mathbb{R}^n_{>0}$, the stoichiometric compatibility class of ${\vx}^*$ is the polyhedron $({\vx}^* + S)\cap\mathbb{R}^n_{>0}$, where $S=\text{span}\{\vy'-\vy \mid \vy\rightarrow \vy'\in E\}$.

Any E-graph $\mathcal{G}=(V,E)$ gives rise to a family of dynamical systems on the positive orthant. Under the standard assumption of  mass-action kinetics~\cite{feinberg1979lectures,gunawardena2003chemical,guldberg1864studies,voit2015150,yu2018mathematical} the dynamical systems generated by $\mathcal{G}$ can be represented as
\begin{eqnarray}\label{eq:autonomous}
\frac{d\vx}{dt}=\sum\limits_{\vy\rightarrow \vy ' \in E} k_{\vy\rightarrow \vy '} {\vx}^{\vy} (\vy ' - \vy) 
\end{eqnarray}
where the parameter $k_{\vy\rightarrow \vy '} > 0$ is the {\em rate constant} corresponding to the reaction $\vy\rightarrow \vy '$. In general, rate constants can actually vary with time due to external signals or forcing, and this gives rise to more general \textit{non-autonomous} dynamical systems of the form
\begin{eqnarray}\label{eq:non_autonomous}
\frac{d\vx}{dt}=\sum\limits_{\vy\rightarrow \vy ' \in E} k_{\vy\rightarrow \vy '}(t) {\vx}^{\vy} (\vy ' - \vy) 
\end{eqnarray}
If there exists an $\epsilon>0$ such that $\epsilon\leq k_{\vy\rightarrow \vy'}(t)\leq\frac{1}{\epsilon}$ for every $\vy\rightarrow \vy'\in E$, then we call such a dynamical system a {\em variable-$k$ polynomial (or power-law) dynamical system}~\cite{craciun2019polynomial,craciun2019endotactic,craciun2013persistence}.

Some of the most relevant properties of these types of systems are expressed by the notions of {\em persistence} and {\em permanence}. 
%Consider a solution $\vx(t)$ of Equation~(\ref{eq:non_autonomous}). 
A dynamical system of the form~(\ref{eq:non_autonomous}) is said to be \emph{persistent} if for any initial condition $\vx_0\in\mathbb{R}^n_{>0}$, the solution $\vx(t)$ satisfies
\begin{eqnarray}
\displaystyle\liminf_{t\rightarrow T}\vx_i(t)>0 
\end{eqnarray}
for every $i=1,2,...,n$, where $T$ is the maximum time for which $\vx(t)$ is well-defined. A dynamical system of the form~(\ref{eq:non_autonomous}) is said to be \emph{permanent} if for every stoichiometric compatibility class $\mathcal{D}$ there exists a compact set $\mathcal{K}\subset \mathcal{D}$ such that, if $\vx(0)\in \mathcal{D}$, then we have $\vx(t)\in\mathcal{K}$ for all  sufficiently large $t$. A dynamical system given by the form~(\ref{eq:autonomous}) is said to be \emph{complex balanced} if there exists $\tilde{\vx}\in\mathbb{R}^n_{>0}$ such that the following is true for every vertex $\vy\in V$:
\begin{eqnarray}
\displaystyle\sum_{\vy\rightarrow \vy' \in E}k_{\vy\rightarrow \vy'}\tilde{\vx}^{\vy} = \displaystyle\sum_{\vy'\rightarrow \vy\in E}k_{\vy'\rightarrow \vy}\tilde{\vx}^{\vy'}.
\end{eqnarray}
These notions are related to the following open conjectures.  
\begin{enumerate}[i.]
\item \textbf{Persistence conjecture:} Dynamical systems generated by weakly reversible reaction networks are persistent.
\item \textbf{Extended Persistence conjecture:} Variable-$k$ dynamical systems generated by endotactic networks are persistent.
\item \textbf{Permanence conjecture:} Dynamical systems generated by weakly reversible reaction networks are permanent.
\item \textbf{Extended Permanence conjecture:} Variable-$k$ dynamical systems generated by endotactic networks are permanent.
\end{enumerate}
A proof of any one of the conjectures above would also imply the \emph{Global Attractor Conjecture}, which says that complex balanced dynamical systems have  a  globally attracting point within any stoichiometric compatibility class. In recent years, there have been several attempts towards the resolution of these conjectures. Craciun, Nazarov and Pantea~\cite{craciun2013persistence} have proved that two-dimensional variable-$k$ endotactic dynamical systems are permanent. Pantea has extended this result to show that weakly reversible variable-$k$ dynamical systems with two-dimensional stoichiometric subspace are permanent~\cite{pantea2012persistence}. Anderson has proved the global attractor conjecture for complex balanced dynamical systems consisting of a {\em single linkage class}~\cite{anderson2011proof}. Gopalkrishnan, Miller and Shiu~\cite{gopalkrishnan2014geometric} have extended this result to show that \emph{strongly endotactic} variable-$k$ dynamical systems are permanent. Craciun has proposed a proof of the global attractor conjecture in full generality, using a method based on toric differential inclusions~\cite{craciun2015toric}. In particular, the proof uses the fact that solutions of toric differential inclusions are confined to certain invariant regions. Therefore, it is important to understand more about invariant regions of toric differential inclusions. In this paper, we give  explicit constructions for minimal invariant regions and minimal globally attracting regions of toric differential inclusions in two dimensions.

\section{Polyhedral Cones, Fans and Toric Differential Inclusions}\label{sec:cones}

Recall that a {\itshape polyhedral cone}~\cite{berman1994nonnegative} $C\subset \mathbb{R}^n$ is a set whose elements can be represented as a nonnegative linear combination of a finite set of vectors as follows 
\begin{equation}
C=\left\{\displaystyle\sum\limits_{i=1}^k a_i\vv_i|a_i\geq 0, \vv_i\in \mathbb{R}^n\right\}
\end{equation}
An \emph{affine cone} is a set of the form $\vx_0 + C$, where $\vx_0\in\mathbb{R}^n$ and $C$ is a cone. In what follows, we might refer to affine cones simply as cones. The meaning will be clear from the context.\\
The polar of a cone $C$ will be denoted by $C^o$ and is defined as follows:
\begin{equation}\displaystyle
C^o=\{\vw\in\mathbb{R}^n \mid  \vw\cdot\vx \leq 0\, \rm{for}\, \vx\in C \}
\end{equation}
Figure~\ref{fig:polar_cone} gives a few examples illustrating the polar of a cone.

\begin{figure}[h!]
\centering
\includegraphics[scale=0.38]{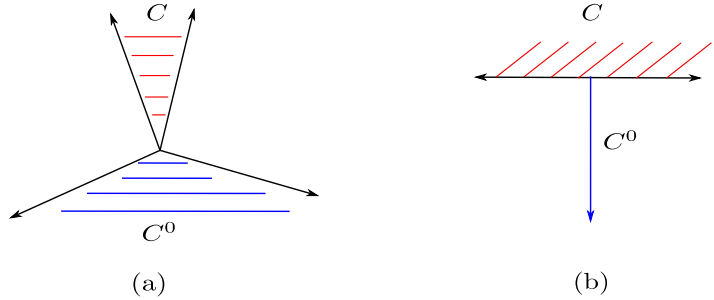}
\caption{\small A few examples showing the polar of a cone $C$. The cone $C$ is marked in red and its polar $C^o$ is marked in blue.}
\label{fig:polar_cone}
\end{figure} 
A \emph{supporting hyperplane} $H$ of a cone $C$ is a hyperplane such that $H\cap C\neq\emptyset$ and $C$ lies in exactly one of the half-spaces generated by $H$. A \emph{face} of a cone is obtained by intersecting the cone with a supporting hyperplane. We now define the notion of a \emph{polyhedral fan}. 

\begin{defn}[Polyhedral Fan]
A finite set $\mathcal{F}$ of polyhedral cones in $\mathbb{R}^n$ is a polyhedral fan if the following two conditions are satisfied:

\begin{enumerate}[(i)]
\item every face of a cone in $\mathcal{F}$ is also a cone in $\mathcal{F}$.
\item the intersection of any two cones in $\mathcal{F}$ is a face of both the cones.
\end{enumerate}
\end{defn}
If $\bigcup\limits_{C\in \mathcal{F}} C=\mathbb{R}^n$, then the polyhedral fan $\mathcal{F}$ is said to be \emph{complete}. Below, we define differential inclusions on the positive orthant. Differential inclusions differ from differential equations in the sense that the right hand side of a differential inclusion is allowed to take values in a set instead of a single point as in the case of differential equations. They are vital to proving the persistence/permanence properties of various dynamical systems. 

\begin{defn}[Differential Inclusion]
A differential inclusion on $\mathbb{R}^n_{>0}$ is a dynamical system of the form
\begin{equation}\displaystyle
			\frac{d\vx}{dt}\in F(\vx)
\end{equation}
, where $F(\vx)\subseteq\mathbb{R}^n$ for all $\vx\in\mathbb{R}^n_{>0}$.
\end{defn}
We now define toric differential inclusions, which are the key dynamical systems of interest in this paper.

\begin{defn}[Toric Differential Inclusions]
Given a complete polyhedral fan $\mathcal{F}$ and $\delta>0$, a toric differential inclusion $\mathcal{T}_{\mathcal{F},\delta}$ is a dynamical system of the form

\begin{eqnarray}
\frac{d\vx}{dt}\in F_{\mathcal{F},\delta}(\vX)
\end{eqnarray}
where $\vX=\log{\vx}\in\mathbb{R}^n$ and 
\begin{eqnarray}\label{eq:initial_tdi}
F_{\mathcal{F},\delta}(\vX) = \left(\displaystyle\bigcap_{\substack{C\in\mathcal{F} \\dist(\vX,C)\leq \delta}} C\right)^o. 
\end{eqnarray}
\end{defn}
From~\cite[Equation 9]{craciun2019quasi}, Equation~(\ref{eq:initial_tdi}) can be written as
\begin{eqnarray}\label{eq:simple_tdi}
F_{\mathcal{F},\delta}(\vX) =  \left({\displaystyle\bigcap_{\substack{C\in\mathcal{F}\\dist(\vX,C)\leq\delta \\ dim(C)=n}} C}\right)^o.
\end{eqnarray}
Figure~\ref{fig:toric_differential_inclusion} depicts the toric differential inclusion for a fan consisting of two line generators. 

\begin{figure}[h!]
\begin{center}
\includegraphics[scale=0.33]{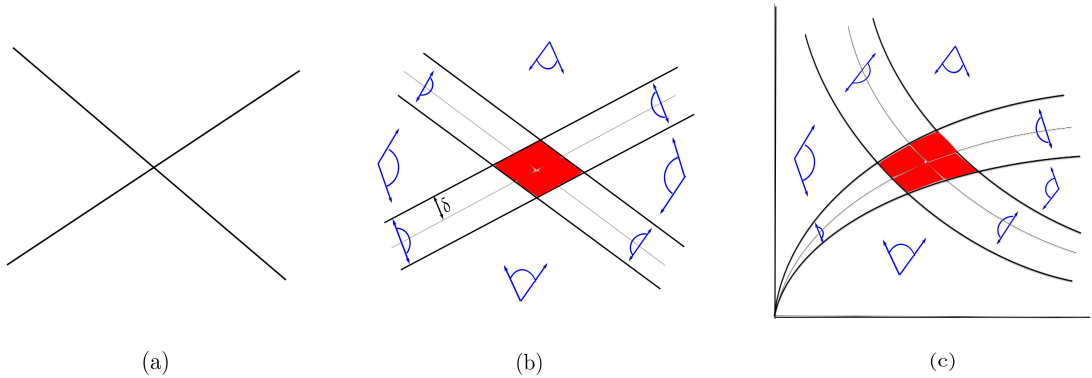}
\caption{\small (a) Fan consisting of nine cones: four cones of dimension two, four cones of dimension one and one cone of dimension zero. (b) Lines parallel to and at a distance $\delta$ from the line generators in (a). The blue cones indicate the right hand side of the toric differential inclusion for regions of space far from the origin. The red part indicates the region of space where the right hand side of the toric differential inclusion is $\mathbb{R}^2$. (c) The curves obtained by exponentiating the lines in (b). The blue cones are unchanged from (b).}
\label{fig:toric_differential_inclusion}
\end{center}
\end{figure}

\begin{defn}[Embedding]
A dynamical system of the form $\frac{d\vx}{dt} = \vf(\vx,t)$ is said to be \textbf{embedded} into the differential inclusion $\frac{d\vx}{dt}\in \vF(\vx)$ if $\vf(\vx,t)\in\vF(\vx)$ for every $\vx\in\mathbb{R}^n_{>0}$ and all $t>0$.
\end{defn}

\begin{defn}[Minimal invariant region]\label{def:invariant_regions}
Consider a toric differential inclusion $\mathcal{T}_{\mathcal{F},\delta}$. A set $\Omega^{\rm{inv}}_{\mathcal{T}_{\mathcal{F},\delta}}\subseteq\mathbb{R}^n_{>0}$ is an {\bf invariant region} of $\mathcal{T}_{\mathcal{F},\delta}$ if for any solution $\vx(t)$ of $\mathcal{T}_{\mathcal{F},\delta}$ with $\vx(0)\in \Omega^{\rm{inv}}_{\mathcal{T}_{\mathcal{F},\delta}}$, we have $\vx(t)\in \Omega^{\rm{inv}}_{\mathcal{T}_{\mathcal{F},\delta}}$ for all $t>0$. A set $\Omega^{\rm{min},\rm{inv}}_{\mathcal{T}_{\mathcal{F},\delta}}$ is the {\bf minimal invariant region} of $\mathcal{T}_{\mathcal{F},\delta}$ if for any invariant region $\Omega^{\rm{inv}}_{\mathcal{T}_{\mathcal{F},\delta}}$, we have $\Omega^{\rm{min},\rm{inv}}_{\mathcal{T}_{\mathcal{F},\delta}}\subseteq \Omega^{\rm{inv}}_{\mathcal{T}_{\mathcal{F},\delta}}$.
\end{defn}

\begin{defn}
Consider a solution $\vx(t)$ of the toric differential inclusion $\mathcal{T}_{\mathcal{F},\delta}$. We call $\vx(t)$ a \textbf{strict solution} of $\mathcal{T}_{\mathcal{F},\delta}$ if for every compact set $K\subset\mathbb{R}^n_{>0}$, there exists $\rho>0$ such that if $\vx(t)\in K$ and $F_{\mathcal{F},\delta}(\log(\vx(t))\neq\mathbb{R}^n$, then $\|\frac{d\vx(t)}{dt}\|>\rho$.
\end{defn}

\begin{defn}[Omega-limit set]\label{def:omega_limit_set}
Consider a toric differential inclusion $\mathcal{T}_{\mathcal{F},\delta}$. Let $\vx(t)$ be a solution of $\mathcal{T}_{\mathcal{F},\delta}$ with $\vx(0)\in \mathbb{R}^n_{>0}$. Then, the \textbf{omega-limit set} of $\vx(t)$ with initial condition $\vx(0)$ is the set $\omega(\vx(0))=\{{\vx}^*\in\mathbb{R}^n_{\geq 0}: \text{there exists a sequence of times} \ t_1 < t_2 <...< t_k$ with $\displaystyle\lim_{k\to\infty}t_k=\infty$ such that $\displaystyle\lim_{k\to\infty}\vx(t_k)=\vx^*\}$. 
\end{defn}

\begin{defn}[Minimal globally attracting region]\label{def:globally_attracting_region}
Consider a toric differential inclusion $\mathcal{T}_{\mathcal{F},\delta}$. A set $\Omega^{\rm{glob}}_{\mathcal{T}_{\mathcal{F},\delta}}\subseteq\mathbb{R}^n_{>0}$ is a {\bf globally attracting region} of $\mathcal{T}_{\mathcal{F},\delta}$ if for any strict solution $\vx(t)$ of $\mathcal{T}_{\mathcal{F},\delta}$ with $\vx(0)\in \mathbb{R}^n_{>0}$, we have $\omega(\vx(0)) \subseteq \Omega^{\rm{glob}}_{\mathcal{T}_{\mathcal{F},\delta}}$. A set $\Omega^{\rm{min},\rm{glob}}_{\mathcal{T}_{\mathcal{F},\delta}}$ is the {\bf minimal globally attracting region} of $\mathcal{T}_{\mathcal{F},\delta}$ if for any globally attracting region $\Omega^{\rm{glob}}_{\mathcal{T}_{\mathcal{F},\delta}}$, we have $\Omega^{\rm{min},\rm{glob}}_{\mathcal{T}_{\mathcal{F},\delta}}\subseteq \Omega^{\rm{glob}}_{\mathcal{T}_{\mathcal{F},\delta}}$.
\end{defn}

\begin{defn}[Trajectory]\label{def:trajectory}
Consider a toric differential inclusion $\mathcal{T}_{\mathcal{F},\delta}$ and points $P_1,P_2\in\mathbb{R}^n_{>0}$. We will say that there is a \textbf{trajectory of $\mathcal{T}_{\mathcal{F},\delta}$} from $P_1$ to $P_2$ if there exists a solution $\vx(t)$ of $\mathcal{T}_{\mathcal{F},\delta}$ with $\vx(0)=P_1$ such that for every $\zeta>0$ there exists $t_0>0$ satisfying $||\vx(t_0) - P_2||<\zeta$.
\end{defn}

\section{Constructing $\mathcal{M}_{\mathcal{F},\delta}$}\label{sec:construction}

In what follows, we describe a procedure for the construction of $\mathcal{M}_{\mathcal{F},\delta}$ in the limit of large $\delta$, where $\mathcal{F}$ is a hyperplane-generated complete polyhedral fan in $\mathbb{R}^2$. From here on, we will refer to a hyperplane-generated complete polyhedral fan in $\mathbb{R}^2$ simply as a fan. Moreover, to simplify the notation, we will denote a point $\vx \in {\mathbb R}^2_{>0}$ by $(x,y)$, and a point $\vX \in {\mathbb R}^2$ by $(X,Y)$.

Let us denote the equations of the line generators of $\mathcal{F}$ by $q_i Y=p_i X$, where $1\leq i\leq b$. We will consider lines that are parallel to these line generators and at a distance $\delta$ from them. Their equations are given by

\begin{eqnarray}\label{eq:one_d_line}
\begin{split}
q_i Y &=p_i X\pm\delta_i,\\ 
\text{where}\, \delta_i &= \delta\sqrt{p^2_i + q^2_i}.
\end{split}
\end{eqnarray}
Under the diffeomorphism $x=e^X,y=e^Y$, these lines get transformed to the curves 
\begin{eqnarray}\label{eq:exponential_curves}
y^{q_i} = h_i x^{p_i},\,\, \text{where}\,\,h_i\in\{e^{-\delta_i},e^{\delta_i}\}.
\end{eqnarray}
We will call the region bounded between the curves $y^{q_i}= e^{-\delta_i}x^{p_i}$ and $y^{q_i}=e^{\delta_i} x^{p_i}$ the \emph{uncertainty region} corresponding to the line $q_i Y=p_i X$. 

\begin{rem}\label{rem:(1,1)_uncert}
Consider a fan $\mathcal{F}$. Then the point $(1,1)$ is contained in the interior of every uncertainty region of $\mathcal{F}$.
\end{rem}

\begin{proof}
Consider a line generator of $\mathcal{F}$ given by $q_i Y=p_i X$. The uncertainty region corresponding to this line generator is the area bounded between the curves $y^{q_i}= e^{-\delta_i}x^{p_i}$ and $y^{q_i}=e^{\delta_i} x^{p_i}$. Note that $1\in(e^{-\delta_i},e^{\delta_i})$. Therefore, the point $(1,1)$ belongs to the interior of the uncertainty region corresponding to the line $q_i Y=p_i X$. Since our choice of the uncertainty region was arbitrary, we get that $(1,1)$ is in the interior of every uncertainty region of $\mathcal{F}$.
\end{proof}

\begin{lem}\label{lem:embed_tdi_another}
Consider the fans $\mathcal{F}$ and $\tilde{\mathcal{F}}$ such that the set of line generators of $\tilde{\mathcal{F}}$ are contained in the set of line generators of $\mathcal{F}$. Then, we have $\mathcal{T}_{\tilde{\mathcal{F}},\delta}\subseteq \mathcal{T}_{\mathcal{F},\delta}$, i.e., $F_{\tilde{\mathcal{F}},\delta}(\vX)\subseteq F_{\mathcal{F},\delta}(\vX)$ for every $\vX\in\mathbb{R}^n$.
\end{lem}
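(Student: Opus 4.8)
The plan is to unwind the definitions and reduce everything to an elementary statement about polar cones. Fix $\vX \in \mathbb{R}^n$. By the simplified formula~(\ref{eq:simple_tdi}), the right-hand side of $\mathcal{T}_{\mathcal{F},\delta}$ at $\vX$ is
\[
F_{\mathcal{F},\delta}(\vX) = \left(\bigcap_{\substack{C\in\mathcal{F},\ \dim(C)=n \\ \mathrm{dist}(\vX,C)\le\delta}} C\right)^o,
\]
and similarly for $\tilde{\mathcal{F}}$. So the lemma amounts to the inclusion of polars $(\bigcap_{\tilde C} \tilde C)^o \subseteq (\bigcap_{C} C)^o$, where the first intersection ranges over the maximal-dimensional cones of $\tilde{\mathcal{F}}$ within distance $\delta$ of $\vX$, and the second over those of $\mathcal{F}$.

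The key step is to compare the two intersections. First I would observe the general fact that the polar reverses inclusions: if $A \subseteq B$ then $B^o \subseteq A^o$. Hence it suffices to show
\[
\bigcap_{\substack{C\in\mathcal{F},\ \dim(C)=n \\ \mathrm{dist}(\vX,C)\le\delta}} C \ \subseteq\ \bigcap_{\substack{\tilde C\in\tilde{\mathcal{F}},\ \dim(\tilde C)=n \\ \mathrm{dist}(\vX,\tilde C)\le\delta}} \tilde C.
\]
Here is where the hypothesis on line generators enters. Since the line generators (the one-dimensional rays, or in the $\mathbb{R}^2$ hyperplane-generated case, the defining hyperplanes) of $\tilde{\mathcal{F}}$ form a subset of those of $\mathcal{F}$, the fan $\mathcal{F}$ is a refinement of $\tilde{\mathcal{F}}$: every maximal cone $\tilde C$ of $\tilde{\mathcal{F}}$ is a union of maximal cones of $\mathcal{F}$, and conversely each maximal cone $C$ of $\mathcal{F}$ is contained in a unique maximal cone $\tilde C$ of $\tilde{\mathcal{F}}$. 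I would make this precise and then argue: given any maximal $\tilde C \in \tilde{\mathcal{F}}$ with $\mathrm{dist}(\vX,\tilde C)\le\delta$, I want to produce a maximal $C \in \mathcal{F}$ with $C \subseteq \tilde C$ and $\mathrm{dist}(\vX, C) \le \delta$; then any point in the left intersection lies in that $C$, hence in $\tilde C$, giving the desired containment. The existence of such a $C$ follows because $\tilde C = \bigcup C_j$ over the maximal cones $C_j \in \mathcal{F}$ it contains, and $\mathrm{dist}(\vX,\tilde C) = \min_j \mathrm{dist}(\vX, C_j)$, so the minimizing $C_j$ works.

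The main obstacle I anticipate is the refinement claim itself — establishing cleanly that a fan whose generating hyperplanes contain those of another is a genuine subdivision, so that each $\tilde C$ decomposes as a union of cones of $\mathcal{F}$ and distances behave as claimed. In the two-dimensional hyperplane-generated setting this is geometrically transparent (adding more lines through the origin only subdivides the existing sectors), but writing it carefully requires noting that the extra hyperplanes of $\mathcal{F}$ either pass through the interior of a cone $\tilde C$, splitting it, or bound it without affecting it, and that no cone of $\mathcal{F}$ straddles the boundary of $\tilde{\mathcal{F}}$. Once that structural fact is in hand, the polar-reversal and distance-minimization steps are routine. I would also remark that the statement is phrased for general $\mathbb{R}^n$ via line generators, so the refinement argument should be given at that level of generality (or the proof restricted to the $\mathbb{R}^2$ case actually used in the paper, with a remark that the general case is analogous).
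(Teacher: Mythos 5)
Your proposal is correct and follows essentially the same route as the paper's proof: both reduce to the inclusion of the two intersections of maximal cones via polar reversal, and both establish it by noting that each maximal cone $\tilde C$ of $\tilde{\mathcal{F}}$ is a union of maximal cones of $\mathcal{F}$, so that $\mathrm{dist}(\vX,\tilde C)\le\delta$ forces some maximal $C\subseteq\tilde C$ with $\mathrm{dist}(\vX,C)\le\delta$ (the paper phrases this last step as a contradiction rather than as a distance-to-a-union minimization, but the content is identical).
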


\begin{proof}
Let $\vX\in\mathbb{R}^n$. From Equation~(\ref{eq:simple_tdi}), we have
\begin{eqnarray}
F_{\mathcal{F},\delta}(\vX) =  \left({\displaystyle\bigcap_{\substack{C\in\mathcal{F}\\dist(\vX,C)\leq\delta \\ dim(C)=n}} C}\right)^o,
\end{eqnarray}
and
\begin{eqnarray}
F_{\tilde{\mathcal{F}},\delta}(\vX) =  \left({\displaystyle\bigcap_{\substack{\tilde{C}\in\tilde{\mathcal{F}}\\dist(\vX,\tilde{C})\leq\delta \\ dim(\tilde{C})=n}} \tilde{C}}\right)^o.
\end{eqnarray}
Let $dist(\vX,C)\leq\delta$ for some $C\in\mathcal{F}$ such that $dim(C)=n$. Then, because the set of line generators of $\tilde{\mathcal{F}}$ are contained in the set of line generators of $\mathcal{F}$, there exists a unique cone $\tilde{C}\in\tilde{\mathcal{F}}$ with $dim(\tilde{C})=n$ such that $C\subseteq \tilde{C}$. Since $C\subseteq \tilde{C}$ and $dist(\vX,C)\leq\delta$, we have $dist(\vX,\tilde{C})\leq\delta$. In addition, if $dist(\vX,C'')\leq\delta$ for some $C''\in\tilde{\mathcal{F}}$ with $dim(C'')=n$, then there exists atleast one $C'\in \mathcal{F}$ such that $C'\subseteq C'', dim(C')=n$ and $dist(\vX,C')\leq\delta$. This is true because if $dist(\vX,\hat{C})>\delta$ for every $\hat{C}\in\mathcal{F}$ such that $\hat{C}\subseteq C$ and $dim(\hat{C})=n$, then since 
\begin{eqnarray}
C''=\displaystyle\bigcup_{\substack{\hat{C}\subseteq C''\\dim(\hat{C})=n}} \hat{C}
\end{eqnarray}
we get $dist(\vX,C'')>\delta$, a contradiction. This implies that
\begin{eqnarray}
{\displaystyle\bigcap_{\substack{C\in\mathcal{F}\\dist(\vX,C)\leq\delta \\ dim(C)=n}} C} \subseteq {\displaystyle\bigcap_{\substack{\tilde{C}\in\tilde{\mathcal{F}}\\dist(\vX,\tilde{C})\leq\delta \\ dim(\tilde{C})=n}} \tilde{C}}.
\end{eqnarray}
Since our choice of the point $\vX$ was arbitrary, it follows that  
\begin{eqnarray}
F_{\tilde{\mathcal{F}},\delta}(\vX)\subseteq F_{\mathcal{F},\delta}(\vX)
\end{eqnarray}
for every $\vX\in\mathbb{R}^n$, as desired.
\end{proof}

\begin{figure}[h!]
\centering
\includegraphics[scale=0.6]{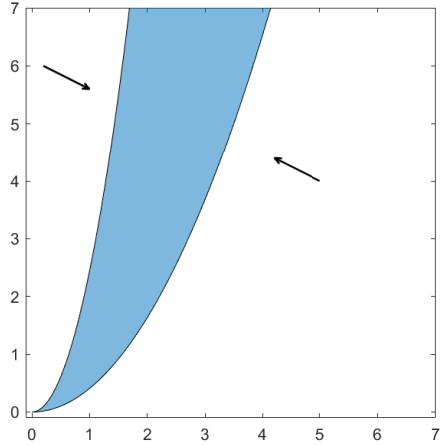}
\caption{\small Attracting directions(shown in black) corresponding to an uncertainty region(shown in blue).}
\label{fig:attracting_direction}
\end{figure}

\begin{defn}[Attracting directions]\label{def:attracting_direction}
Consider a fan $\mathcal{F}$. Let $UC$ be the uncertainty region corresponding to a line generator $qY=pX$ of $\mathcal{F}$. The complement of $UC$ given by $\mathbb{R}^2_{>0}\setminus UC$ consists of two connected components. We define the \textbf{attracting directions} of $UC$ to be the directions that are orthogonal to the line $qY=pX$ and point towards the uncertainty region $UC$ within each connected component. 
\end{defn}
Figure~\ref{fig:attracting_direction} shows the attracting directions corresponding to an uncertainty region.

\begin{defn}\label{def:function_uncertainty_region}
Consider a fan $\mathcal{F}$ and the uncertainty regions corresponding to its one-dimensional generators. Given $\vx\in\mathbb{R}^2_{>0}$, we define $r(\vx)$ to be the number of uncertainty regions that contain the point $\vx$ in their interior.
\end{defn}
In the next remark, we relate the right-hand side of the toric differential inclusion on $\mathbb{R}^2_{>0}$ with the function $r(\vx)$.

\begin{rem}\label{rem:tdi}
The toric differential inclusion $\frac{d\vx}{dt}\in F_{\mathcal{F},\delta}(\vX)$ can be described as follows:
\begin{enumerate}[(i)]		
\item If $r(\vx)=1$, and $\vx$ is in the uncertainty region corresponding to the line $q_i Y=p_iX$. If $ p_i Y + q_i X \geq 0$, then the cone $F_{\mathcal{F},\delta}(\vX)$ is the half space $p_i Y + q_i X\geq 0$. If $p_i Y + q_i X\leq 0$, then the cone $F_{\mathcal{F},\delta}(\vX)$ is the half space $p_i Y + q_i X\leq 0$.
\item If $r(\vx)\geq 2$, then $F_{\mathcal{F},\delta}(\vX)=\mathbb{R}^2$.
\item If $r(\vx)=0$, and $\vx$ lies in the region between the two nearest uncertainty regions which correspond to the lines $q_i Y=p_i X$ and $q_j Y=p_j X$. Then $F_{\mathcal{F},\delta}(\vX)$ is a proper cone formed by the intersection of two half-spaces which are the right-hand sides of the toric differential inclusion corrresponding to these two uncertainty regions.
\end{enumerate}
\end{rem}

\begin{assump}\label{assump:positive_negative_generators}
We will assume that the fan $\mathcal{F}$ has at least one line generator having slope $> 1$, at least one line generator having slope such that $0 < slope <1$ and at least one line generator having slope $< 0$. In addition, we will assume that the line generators of the fan do not have zero or infinite slope. These special cases will be dealt in Section~\ref{sec:special_cases}.
\end{assump}

To build the region $\mathcal{M}_{\mathcal{F},\delta}$, we use the following steps:

\begin{enumerate}

\item[]\textbf{\textit{Step 1: Calculate the intersections of uncertainty regions}}

One can calculate the coordinates corresponding to the intersection of uncertainty regions by solving the following system of equations
\begin{eqnarray}
\begin{split}
y^{q_i}= e^{\pm\delta_i}x^{p_i},\\
y^{q_j}= e^{\pm\delta_j}x^{p_j}
\end{split}
\end{eqnarray}
for every $i\neq j$. Solving them gives us the following points 
\begin{eqnarray}\label{eq:expo_intersection_points}
\begin{split}
(x_1,y_1)&=\bigg(e^{\frac{q_i\delta_j-q_j\delta_i}{p_iq_j-p_jq_i}},e^{\frac{p_i\delta_j-p_j\delta_i}{p_iq_j-p_jq_i}}\bigg), (x_2,y_2)=\bigg(e^{\frac{q_i\delta_j+q_j\delta_i}{p_iq_j-p_jq_i}},e^{\frac{p_i\delta_j+p_j\delta_i}{p_iq_j-p_jq_i}}\bigg),\\ (x_3,y_3)&=\bigg(e^{\frac{-q_i\delta_j-q_j\delta_i}{p_iq_j-p_jq_i}},e^{\frac{-p_i\delta_j-p_j\delta_i}{p_iq_j-p_jq_i}}\bigg), (x_4,y_4)=\bigg(e^{\frac{-q_i\delta_j+q_j\delta_i}{p_iq_j-p_jq_i}},e^{\frac{-p_i\delta_j+p_j\delta_i}{p_iq_j-p_jq_i}}\bigg).
\end{split}
\end{eqnarray}

\begin{rem}\label{rem:para_inter}
Note that substituting $\delta_i=\delta\sqrt{p_i^2 + q_i^2},\delta_j=\delta\sqrt{p_j^2 + q_j^2}$ from Equation~(\ref{eq:one_d_line}) into Equation~(\ref{eq:expo_intersection_points}), we get that the intersection points of the uncertainty regions can be parametrised as $(x,y)=(e^{C\delta},e^{D\delta})$, where $C=f(p_i,p_j,q_i,q_j)$ and $D=g(p_i,p_j,q_i,q_j)$ for some appropriate functions $f$ and $g$.
\end{rem}

Let us denote by $S^{\rm uc}$ the set of all possible intersection points between the uncertainty regions.
Let us assume that the fan $\mathcal{F}$ has line generators given by the set $\mathcal{L} = \{q_iY = p_iX\ |\ i\in[b]\}$, where $[b]=\{1,2,...,b\}$. We can classify the uncertainty regions into the following groups. Let $S_1=\{i\in [b]\ |\ \frac{p_i}{q_i}<0\},S_2=\{i\in[b]\ |\ \frac{p_i}{q_i}>0,\ |q_i|>|p_i|\}, S_3=\{i\in[b]\ |\ \frac{p_i}{q_i}>0,\ |q_i|\leq |p_i|\}$. Note that by assumption~\ref{assump:positive_negative_generators}, each of the sets $S_1,S_2,S_3$ is non-empty. Define 
\begin{eqnarray}
\begin{split}
i_1 &= \arg\displaystyle\max_{i\in S_1}\frac{p_i}{q_i} \\
i_2 &= \arg\displaystyle\min_{i\in S_2}\frac{p_i}{q_i} \\
i_3 &= \arg\displaystyle\max_{i\in S_3}\frac{p_i}{q_i} \\
i_4 &= \arg\displaystyle\min_{i\in S_1}\frac{p_i}{q_i}
\end{split}
\end{eqnarray}
Figure~\ref{fig:construction_step_1} shows the uncertainty regions of the fan $\mathcal{F}$.

\begin{figure}[H]
\begin{center}
\includegraphics[scale=0.6]{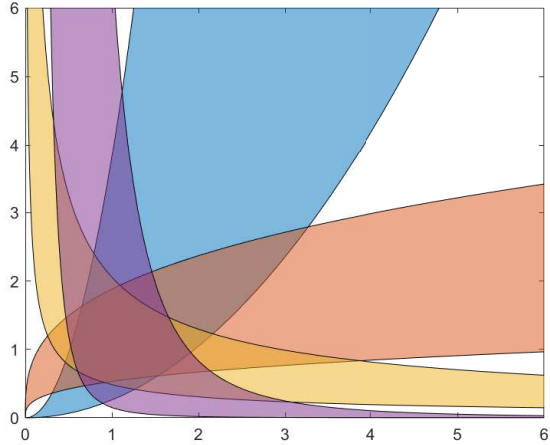}
\caption{Plot depicting the uncertainty regions of the fan $\mathcal{F}$.}
\label{fig:construction_step_1}
\end{center}
\end{figure}

\item[]\textbf{\textit{Step 2: Choose $(N,M)$ and $(n,m)$, the starting points for constructing
$M^{\mathcal{F}}_{\delta}$}}

Let $S^{\rm uc} = \{(x_1,y_1), (x_2,y_2),...,(x_p,y_p))\}$ denote the set of intersection points of the uncertainty regions. Let 
$x_{\max} = \displaystyle\max_{1\leq i\leq p}x_i$ and $y_{\max} = \displaystyle\max_{1\leq i\leq p}y_i$. Let us assume that $y_{\max}=\max{(x_{\max},y_{\max})}$. We will set $M=y_{\max}$ and will denote the intersection point in $S^{\rm uc}$ whose $y$ coordinate is $M$ by $(N,M)$\footnote{If we have several points with same maximal value like $(N_1,M),\ (N_2,M),...$, then choose the intersection point closest to the line $y=x$.}. 

Let $m=\min(\max(x_1,y_1),\max(x_2,y_2),...,\max(x_p,y_p))$. We will denote the intersection point in $S^{\rm uc}$ with this coordinate by $(n,m)$. These two points will serve as the starting points for building the region $\mathcal{M}_{\mathcal{F},\delta}$. Figure~\ref{fig:construction_step_2} illustrates this point.

\begin{figure}[H]
\begin{center}
\includegraphics[scale=0.6]{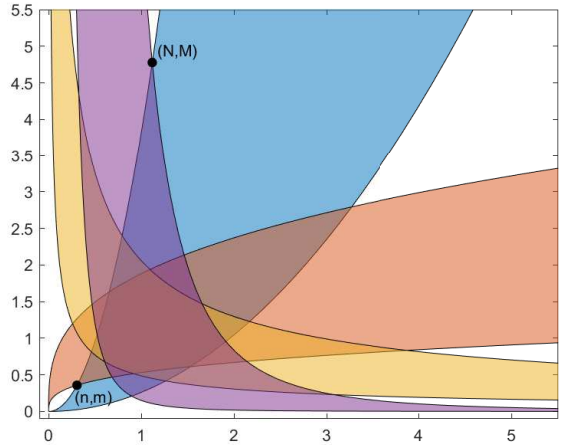}
\caption{Choose starting points $(N,M)$ and $(n,m)$}
\label{fig:construction_step_2}
\end{center}
\end{figure}

\item[]\textbf{\textit{Step 3: Starting from $(N,M)$ build polygonal lines $I_1,I_4$}.}

The procedure described in this step is with respect to Figure~\ref{fig:construction_step_3}. Starting from $A_0=(N,M)$, we build the polygonal line in a counter-clockwise sense as follows. The first line segment that we build is $A_0A_1$ that crosses an uncertainty region such that its slope is given by the slope of the attracting direction of this uncertainty region. Then starting at $A_1$, we repeat this process until one reaches the outer boundary of the uncertainty region with index $i_1$. We will denote these trajectories of polygonal lines by $I_1$.

Then starting from $A_0=(N,M)$ again, we build the polygonal line in a clockwise sense as follows. The first line segment that we build is $A_0B_1$ that crosses an uncertainty region such that its slope is given by the slope of the attracting direction of this uncertainty region. Then starting at $B_1$, we repeat this process until one reaches the outer boundary of the uncertainty region with index $i_4$. We will denote these trajectories of polygonal lines by $I_4$.

\begin{figure}[H]
\begin{center}
\includegraphics[scale=0.6]{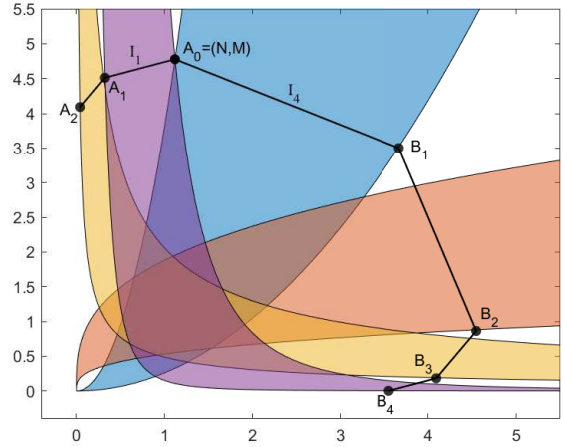}
\caption{Build polygonal lines $I_1,I_4$}
\label{fig:construction_step_3}
\end{center}
\end{figure}

\item[]\textbf{\textit{Step 4: Starting from $(n,m)$ build polygonal lines $I_2,I_3$}}

The procedure described in this step is with respect to Figure~\ref{fig:construction_step_4}. Starting from $C_0=(n,m)$, we build the polygonal line in a clockwise sense as follows. The first line segment that we build is $C_0C_1$ that crosses an uncertainty region such that its slope is given by the slope of the attracting direction of this uncertainty region. Then starting at $C_1$, we repeat this process until one reaches the outer boundary of the uncertainty region with index $i_2$. We will denote these trajectories of polygonal lines by $I_2$.
 
Then starting from $C_0=(n,m)$ again, we build the polygonal line in a counter-clockwise sense as follows. The first line segment that we build is $C_0D_1$ that crosses an uncertainty region such that its slope is given by the slope of the attracting direction of this uncertainty region. Then starting at $D_1$, we repeat this process until one reaches the outer boundary of the uncertainty region with index $i_3$. We will denote these trajectories of polygonal lines by $I_3$.

\begin{figure}[H]
\begin{center}
\includegraphics[scale=0.6]{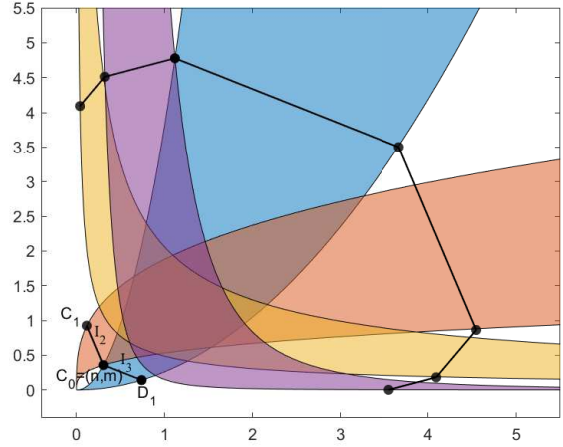}
\caption{Build polygonal lines $I_2,I_3$}
\label{fig:construction_step_4}
\end{center}
\end{figure}

\item[]\textbf{\textit{Step 5: Connect $I_1,I_2,I_3,I_4$}}

We will connect $I_1$ and $I_2$ by following the curves $\mathcal{C}_{i_1}$, $\mathcal{C}_{i_2}$ and will connect $I_3$ and $I_4$ by following the curves  $\mathcal{C}_{i_3},\mathcal{C}_{i_4}$ as shown in Figure~\ref{fig:construction_step_5}. This gives us the boundary of $\mathcal{M}_{\mathcal{F},\delta}$.

\begin{figure}[H]
\begin{center}
\includegraphics[scale=0.6]{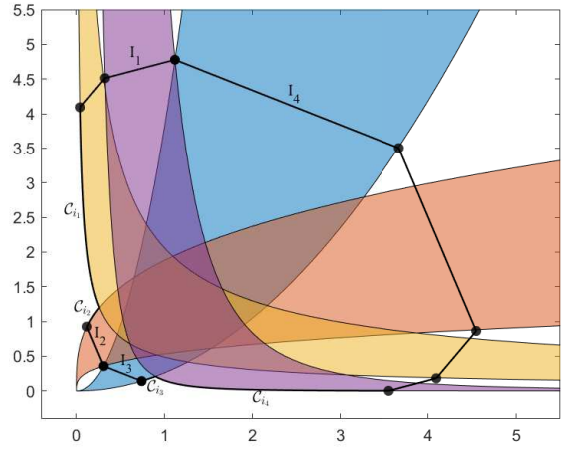}
\caption{Connect $I_1,I_2,I_3,I_4$}
\label{fig:construction_step_5}
\end{center}
\end{figure}

\end{enumerate}

\section{The minimal invariant region}\label{sec:min_invariant_region}

We will assume the following notation with respect to the construction of $M^{\mathcal{F}}_{\delta}$ for the rest of the paper.

\begin{enumerate}

\item Let $C^{\mathcal{F}}_{\delta}$ denote the boundary of $M^{\mathcal{F}}_{\delta}$.

\item The line segments in $C^{\mathcal{F}}_{\delta}$ can be decomposed into the following paths:
\begin{enumerate}[(i)]
\item $\{B_u, B_{u-1}, \cdots , B_p, B_{p-1}, \cdots, B_1, A_0=(N,M), A_1, A_2\cdots, A_q\}$.
\item $\{C_r, C_{r-1}\cdots, C_1, C_0=(n,m), D_1, D_2, \cdots, D_s\}$,
\end{enumerate}
where $A_q$ is the terminal point in the construction of $I_1$ and lies on the uncertainty region with index $i_1$, $B_u$ is the terminal point in the construction of $I_4$ and lies on the uncertainty region with index $i_4$, $C_r$ is the terminal point in the construction of $I_2$ and lies on the uncertainty region with index $i_2$ and $D_s$ is the terminal point in the construction of $I_3$ and lies on the uncertainty region with index $i_3$. Figure~\ref{fig:red_cone} shows these points.

\begin{figure}[h!]
\centering
\includegraphics[scale=0.5]{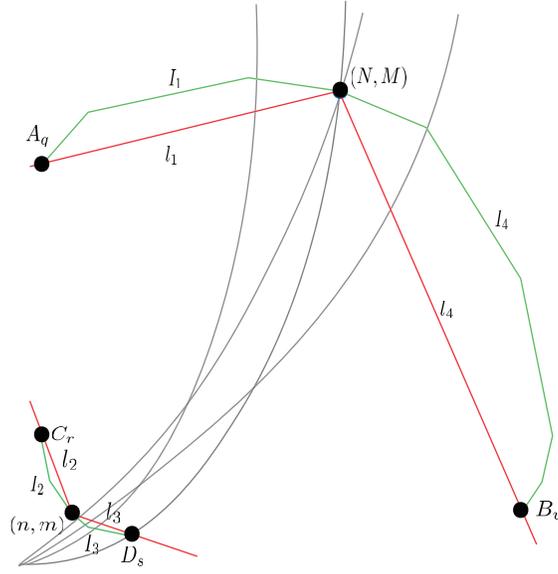}
\caption{\small The green polygonal line denotes the paths $I_1,I_2,I_3,I_4$. The line segments $l_1,l_2,l_3,l_4$ are marked in red. The cone $C_{l_1,l_4}$ is formed by the line segments $l_1$ and $l_4$ with vertex at $(N,M)$. The cone $C_{l_2,l_3}$ is formed by the line segments $l_2$ and $l_3$ with vertex at $(n,m)$.}
\label{fig:red_cone}
\end{figure}

\item We will denote the following
  
\begin{enumerate}
\item[] $l_1$: line segment connecting $(N,M)$ to $A_q$.
\item[] $l_2$: line segment connecting $(n,m)$ to $C_r$.
\item[] $l_3$: line segment connecting $(n,m)$ to $D_s$.
\item[] $l_4$: line segment connecting $(N,M)$ to $B_u$.
\item[] $C_{l_1,l_4}$: cone formed by the line segments $l_1$ and $l_4$ with vertex at $(N,M)$.
\item[] $C_{l_2,l_3}$: cone formed by the line segments $l_2$ and $l_3$ with vertex at $(n,m)$.
\end{enumerate}

Figure~\ref{fig:red_cone} illustrates the cones $C_{l_1,l_4}$ and $C_{l_2,l_3}$ formed by the line segments $l_1,l_2,l_3,l_4$.

\end{enumerate}

\begin{rem}\label{rem:coordinates_points}
Let us denote the coordinates of the point $B_u$ in the construction of $M^{\mathcal{F}}_{\delta}$ as $(x_u,y_u)$. By assumption~\ref{assump:positive_negative_generators}, there is at least one line generator having negative slope and at least one line generator have positive slope. Since $B_u$ is the terminal point in the construction of $I_4$, it lies in the fourth quadrant with respect to Figure~\ref{fig:logarithmic_space}. The point $M$ has the maximum coordinate among the points in $S^{\rm uc}$ and therefore lies either in the first or the second quadrant with respect to Figure~\ref{fig:logarithmic_space}. This implies that $\log(M)>0$ and $\log(y_u)<0$ or equivalently $M>1$ and $y_u<1$.
\end{rem}

\begin{figure}[h!]
\centering
\includegraphics[scale=0.6]{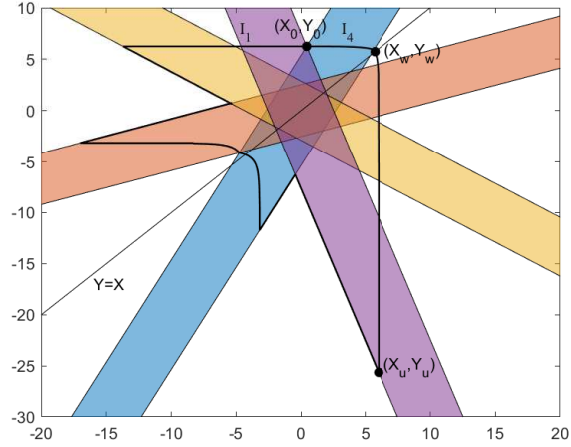}
\caption{\small The curve $C^{\mathcal{F}}_{\delta}$ in logarithmic space.}
\label{fig:logarithmic_space}
\end{figure} 

\begin{rem}\label{rem:knee}
It is instructive to visualize the line segments of $C^{\mathcal{F}}_{\delta}$ in logarithmic space. In particular, consider the diffeomorphism $\phi:(x,y)\to (X,Y)$, where $X=\log(x),Y=\log(y)$. The Jacobian of this diffeomorphism is $J=\begin{pmatrix}
e^{-X} & 0 \\
0 & e^{-Y} 
\end{pmatrix}$. For $Y-X \gg 1$, we consider the rescaled Jacobian $J_1=\begin{pmatrix}
e^{Y-X} & 0 \\
0 & 1 
\end{pmatrix}$. For $X-Y \gg 1$, we consider the rescaled Jacobian $J_2=\begin{pmatrix}
1 & 0 \\
0 & e^{X-Y} 
\end{pmatrix}$
. Note that for $M>N$ and $\delta$ large enough, the rescaled Jacobians $J_1$ and $J_2$ show that the path $I_4$ consists of an (almost) horizontal component and an (almost) vertical component separated by the line $Y=X$ in logarithmic space. Figure~\ref{fig:logarithmic_space} illustrates this fact. Since we assumed $\delta$ to be very large, the shape of $I_4$ in the narrow region around $Y=X$ can be safely ignored.
\end{rem}

\begin{lem}\label{lem:knee_coordinate}
Consider the path $I_4$ that starts from the point $(N,M)$ in the construction of $M^{\mathcal{F}}_{\delta}$. Let us denote the coordinates of the terminal point $B_u$ on $I_4$ by $(x_u,y_u)$. If $N<M$, then in the limit of large $\delta$, we have $x_u=O(M)$.
\end{lem}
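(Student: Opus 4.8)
The plan is to track the path $I_4$ segment-by-segment in logarithmic coordinates, where each leg of the polygonal line is a straight segment whose slope equals the slope of the attracting direction of the uncertainty region it crosses, i.e., the direction orthogonal to the corresponding line generator $q_i Y = p_i X$. Write $(X_k, Y_k)$ for the logarithmic coordinates of the vertices $A_0 = (N,M) \to B_1 \to \cdots \to B_u$, so $(X_0, Y_0) = (\log N, \log M)$. Since $N < M$ we start strictly above the diagonal $Y = X$. By Remark~\ref{rem:knee}, for $\delta$ large the path $I_4$ splits into an almost-horizontal part (while $Y - X \gg 1$) followed by an almost-vertical part (once $X - Y \gg 1$), with a negligible transition near $Y = X$. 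The claim $x_u = O(M)$ is equivalent to showing $X_u = \log M + O(1)$ as $\delta \to \infty$, i.e., the total horizontal displacement accumulated along $I_4$ is $\log N + O(1) \le \log M + O(1)$ — in other words, the horizontal excursion is controlled by the starting height, not by $\delta$.

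The key steps I would carry out: (1) Set up the recursion for vertices. In log-space, crossing the uncertainty region of the generator with parameters $(p_i, q_i)$ along the attracting direction means moving along a segment of slope $-q_i/p_i$ (orthogonal to slope $p_i/q_i$), and the vertical extent of that crossing is essentially $2\delta_i / \text{(something)} = O(\delta)$ — the width of the uncertainty strip measured vertically. So each leg in the vertical part contributes $O(\delta)$ in $Y$ but the corresponding change in $X$ is that amount times the (bounded) cotangent of the slope. (2) Identify which legs contribute to horizontal displacement. By Assumption~\ref{assump:positive_negative_generators} and the classification into $S_1, S_2, S_3$, the path $I_4$ runs clockwise from $(N,M)$ down toward the uncertainty region of index $i_4 = \arg\min_{i \in S_1} p_i/q_i$ (the steepest negative-slope generator). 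While $Y - X \gg 1$ (above the diagonal) the attracting-direction slopes are such that motion is predominantly in the $X$-direction — here is where horizontal displacement happens — but the total vertical drop available before reaching $Y \approx X$ is only $Y_0 - X_0 = \log M - \log N = O(\log M)$... wait, I need instead: the horizontal distance traveled in this regime is bounded because once we've moved horizontally we cross into a different cone/region and the finitely many uncertainty strips each have bounded width in the relevant direction near the point $(1,1)$ region — more precisely, crossing all the uncertainty strips that the horizontal part meets contributes a total $X$-displacement of $O(1)$ (independent of $\delta$) since these strips all pass near $(1,1)$ by Remark~\ref{rem:(1,1)_uncert}, and there are finitely many of them each of bounded angular width. (3) Conclude: $X_u = X_0 + (\text{bounded horizontal drift}) = \log N + O(1) \le \log M + O(1)$, hence $x_u = O(M)$. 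The vertical part after the diagonal only adds to $Y$ (downward), not to $X$, up to $O(1)$ corrections, so it doesn't affect the estimate.

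The main obstacle is step (2): making rigorous the claim that the horizontal drift of $I_4$ before it turns downward is $O(1)$ rather than growing with $\delta$. This requires a careful geometric argument about how the polygonal line threads through the finitely many uncertainty regions — specifically, that in the regime $Y - X \gg 1$ the relevant uncertainty strips (those with $p_i/q_i$ in the appropriate range, governed by $S_1, S_2, S_3$) are encountered in an order that forces the path to move essentially monotonically and with bounded total horizontal extent, because each such strip is a fixed-width band (width $O(\delta)$ in the direction normal to its generator, hence $O(1)$ in the relevant transverse coordinate once we rescale by $\delta$) and they all funnel through a common bounded neighborhood. I would handle this by rescaling the log-coordinates by $\delta$ (as the rescaled Jacobians $J_1, J_2$ in Remark~\ref{rem:knee} suggest), so that in the limit $\delta \to \infty$ the uncertainty strips become half-lines/rays emanating from the origin and the polygonal path $I_4$ converges to a piecewise-linear limit curve whose horizontal intercept is exactly $\log N$ rescaled — then the $O(1)$ error is the deviation from this limit. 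The bookkeeping of exactly which generators' attracting directions are used in which order, and checking the slope signs make the path behave as in Figure~\ref{fig:logarithmic_space}, is the tedious-but-routine part; the conceptual crux is the rescaling-and-limit argument that pins the horizontal displacement to the starting data.
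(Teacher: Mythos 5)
Your qualitative picture (an almost-horizontal leg followed by an almost-vertical leg in logarithmic coordinates, separated by the diagonal) agrees with Remark~\ref{rem:knee}, but the quantitative claim at the heart of your steps (2)--(3) is false, and it is exactly the claim the lemma turns on. You assert that the total horizontal drift of $I_4$ in log coordinates is $O(1)$, so that $X_u=\log N+O(1)$. In fact the horizontal phase must carry $X$ from $\log N$ all the way to the diagonal $Y=X$; since during that phase the segments have slopes drawn from a fixed finite set (the attracting directions), the original coordinate $y$ changes only by a bounded multiplicative factor, so $Y$ stays within $O(1)$ of $\log M$ and the diagonal is reached at $X\approx\log M$. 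The horizontal drift is therefore $\log M-\log N=(M'-N')\delta\to\infty$, and the correct conclusion is $X_u=\log M+O(1)$, i.e.\ $x_u=\Theta(M)$ --- which is what the paper proves and what is needed downstream (Lemma~\ref{lem:cone_uc} uses $x_u=c_1M+c_2$ with $c_1>0$; your stronger claim $x_u=O(N)=o(M)$ would contradict it). Your supporting argument --- that the finitely many uncertainty strips ``all pass near $(1,1)$'' and each contributes $O(1)$ horizontal displacement --- fails because each strip has width $2\delta$ in log space, and in the original coordinates its width at height $y\sim M$ is of order $M$, not $O(1)$. Even your own rescaling $X\mapsto X/\delta$ yields a limit path running horizontally from $(N',M')$ to $(M',M')$ and then down, i.e.\ a rescaled horizontal displacement of $M'-N'$, not $0$; carried out correctly, your framework refutes your claim.

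For comparison, the paper works entirely in the original coordinates with a mean-slope (chord) argument: writing $y_w-y_0=\bar s\,(x_w-x_0)$ for the chord from $(N,M)$ to the crossing point $(x_w,y_w)$ of $I_4$ with $y=x$, where $\bar s$ lies between the extreme attracting-direction slopes along that stretch and $\bar s\neq 1$, one solves $x_w=y_w=\frac{1}{1-\bar s}(M-\bar sN)=\Theta(M)$ since $N/M\to 0$. A second chord argument from $(x_w,y_w)$ to $(x_u,y_u)$, with a short case analysis on the mean slope $\tilde s$, transfers this to $x_u=\Theta(x_w)=\Theta(M)$. To salvage your approach you would need to replace ``bounded horizontal drift'' with this kind of control on where $I_4$ meets the diagonal.
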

\begin{proof}
Let us denote the coordinates of $(N,M)$ by $(x_0,y_0)$ and let $(x_w, y_w)$ be the intersection of $I_4$ with the line $y=x$. Our proof will proceed by analysing the following paths along $I_4$: (i) From $(x_0,y_0)$ to $(x_w, y_w)$ and (ii) From $(x_w,y_w)$ to $(x_u, y_u)$. Let $Q$ denote the set of slopes of the line segments on the path $I_4$ starting from $(x_0,y_0)$ to $(x_w,y_w)$. Now consider the line connecting $(x_0,y_0)$ to $(x_w,y_w)$. Let $Q_{\rm min}$ and $Q_{\rm max}$ denote the minimum and maximum slopes in the set $Q$. Then we have $y_w-y_0=\bar{s}(x_w-x_0)$, where $Q_{\rm min}\leq\bar{s}\leq\ Q_{\rm max}$ and $\bar{s}\neq 1$. By Remark~\ref{rem:para_inter}, we know that the point $(x_0,y_0)$ can be parametrized as $(x_0,y_0)=(e^{N'\delta},e^{M'\delta})$ for some $N',M'\in\mathbb{R}$. Since $x_w=y_w$, we get $y_w-e^{M'\delta}=\bar{s}(y_w-e^{N'\delta})$. This gives
\begin{eqnarray}\label{eq:temp_y}
y_w=\frac{1}{1-\bar{s}}(e^{M'\delta}-\bar{s}e^{N'\delta}).
\end{eqnarray}
Without loss of generality, assume that $y_w=h(x,y,\delta)e^{M'\delta}$ for some function $h(x,y,\delta)$. Inserting this expression of $y_w$ into Equation~(\ref{eq:temp_y}), we get 
\begin{eqnarray}
h(x,y,\delta)=\frac{1}{1-\bar{s}}(1 - \bar{s}e^{(N'-M')\delta})\to \frac{1}{1-\bar{s}}\,\, \text{as}\,\, \delta\to\infty.
\end{eqnarray}
, where the last step follows because $M>N$ or equivalently $M'>N'$. Therefore, $h(x,y,\delta)$ approaches a bounded constant as $\delta\to\infty$. This implies that $x_w=y_w= O(M)$.

Now consider the path along $I_4$ from $(x_w, y_w)$ to $(x_u, y_u)$. Let $Q'$ denote the set of slopes of the line segments on this path. Consider the line connecting $(x_w,y_w)$ to $(x_u,y_u)$. Let $Q'_{\rm min}$ and $Q'_{\rm max}$ denote the minimum and maximum slopes in the set $Q'$. Then we have $y_w-y_u=\tilde{s}(x_w-x_u)$, where $Q'_{\rm min}\leq\tilde{s}\leq\ Q'_{\rm max}$ and $\tilde{s}\neq 1$. Note that since $y_w\neq y_u$, we have $\tilde{s}\neq 0$. Further, since $x_w=y_w$, we have $x_w-y_u=\tilde{s}(x_w-x_u)$. This gives  
\begin{eqnarray}\label{eq:temp_x}
x_w=\frac{1}{1-\tilde{s}}(y_u - \tilde{s}x_u).
\end{eqnarray}
Let us assume that $x_w=\tilde{h}(x,y,\delta)x_u$ for some function $\tilde{h}(x,y,\delta)$. Inserting this expression of $x_w$ into Equation~(\ref{eq:temp_x}), we get
\begin{eqnarray}\label{eq:intermediate_I_4}
\tilde{h}(x,y,\delta)=\frac{1}{1-\tilde{s}}\bigg(\frac{y_u}{x_u} - \tilde{s}\bigg)
\end{eqnarray}
Note from Remark~\ref{rem:knee} that if $M>N$ and for $\delta$ large enough, the path $I_4$ consists of an (almost) horizontal component and an (almost) vertical component separated by the line $Y=X$ in logarithmic space. This implies that $\log(x_u) > \log(y_u)$ or equivalently $x_u>y_u$. Note that $\tilde{h}(x,y,\delta)>0$ for all $x,y\in\mathbb{R}^2_{>0}$ and $\delta > 0$. Therefore, we have
\begin{eqnarray}
\tilde{h}(x,y,\delta) = \bigg|\frac{1}{1-\tilde{s}}\bigg|\bigg|\frac{y_u}{x_u} - \tilde{s}\bigg|\leq \bigg|\frac{1}{1-\tilde{s}}\bigg|\bigg(\bigg|\frac{y_u}{x_u}\bigg| + |\tilde{s}|\bigg)<\bigg|\frac{1}{1-\tilde{s}}\bigg|(1 + |\tilde{s}|).
\end{eqnarray}
This shows that $\tilde{h}(x,y,\delta)$ is upper bounded by a constant. We now show that $\tilde{h}(x,y,\delta)$ is lower bounded by a constant. Consider the following cases:
\begin{enumerate}
\item $\tilde{s} > 1$: Since $\frac{y_u}{x_u} <1$, we get that $\tilde{h}(x,y,\delta)=\frac{1}{1-\tilde{s}}\bigg(\frac{y_u}{x_u} - \tilde{s}\bigg) >1$.
\item $0 < \tilde{s} < \frac{y_u}{x_u} <1$: We show that this case cannot happen. Consider the triangle formed by the points $(0,0),(x_u,y_u),(x_w,y_w)$. The slope of the line segment joining $(0,0)$ to $(x_u,y_u)$ has slope $\frac{y_u}{x_u}$ which is less than 1. In addition, the slope of the line segment joining $(x_u,y_u)$ to $(x_w,y_w)$ has slope $\tilde{s}$ which is less than 1. This implies that the slope of the line segment joining $(0,0)$ to $(x_w,y_w)$ is less than 1, contradicting the fact $x_w=y_w$.
\item $\tilde{s} < 0 < \frac{y_u}{x_u} <1$: Since $\frac{y_u}{x_u}>0$, we get that $\tilde{h}(x,y,\delta)=\frac{1}{1-\tilde{s}}\bigg(\frac{y_u}{x_u} - \tilde{s}\bigg) > \frac{-\tilde{s}}{1-\tilde{s}}$.
\end{enumerate}
Since $\tilde{h}(x,y,\delta)$ is upper bounded and lower bounded by some constants, we get that $x_u = O(x_w) = O(M)$.

\end{proof}

We assume a certain ordering on the uncertainty regions of a fan by looking at the corresponding picture in logarithmic space. The uncertainty region corresponding to the line having the minimum positive slope has index 1. The uncertainty region corresponding to the line having the minimum negative slope has the highest index. The intermediate indices of the uncertainty regions are assigned in increasing order in the counter-clockwise sense. Figure~\ref{fig:ordering_uncert_regions} illlustrates this point.

\begin{figure}[H]
\centering
\begin{subfigure}{0.47\textwidth}
\includegraphics[scale=0.5]{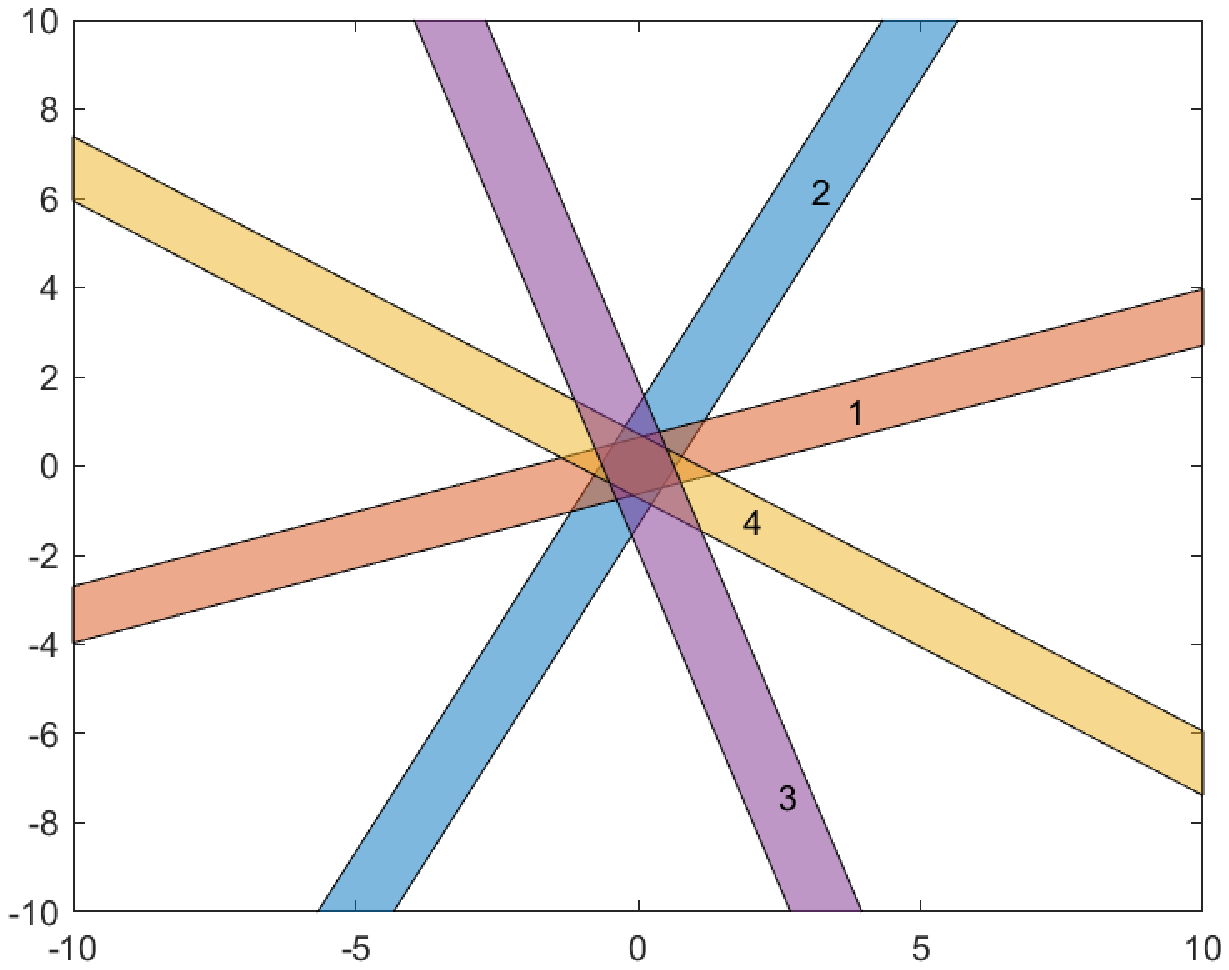}
\caption{\small{Fan with strips of width $\delta$ drawn around its line generators.}}
\label{fig:rotation_log}
\end{subfigure}
\begin{subfigure}{0.47\textwidth}
\includegraphics[scale=0.5]{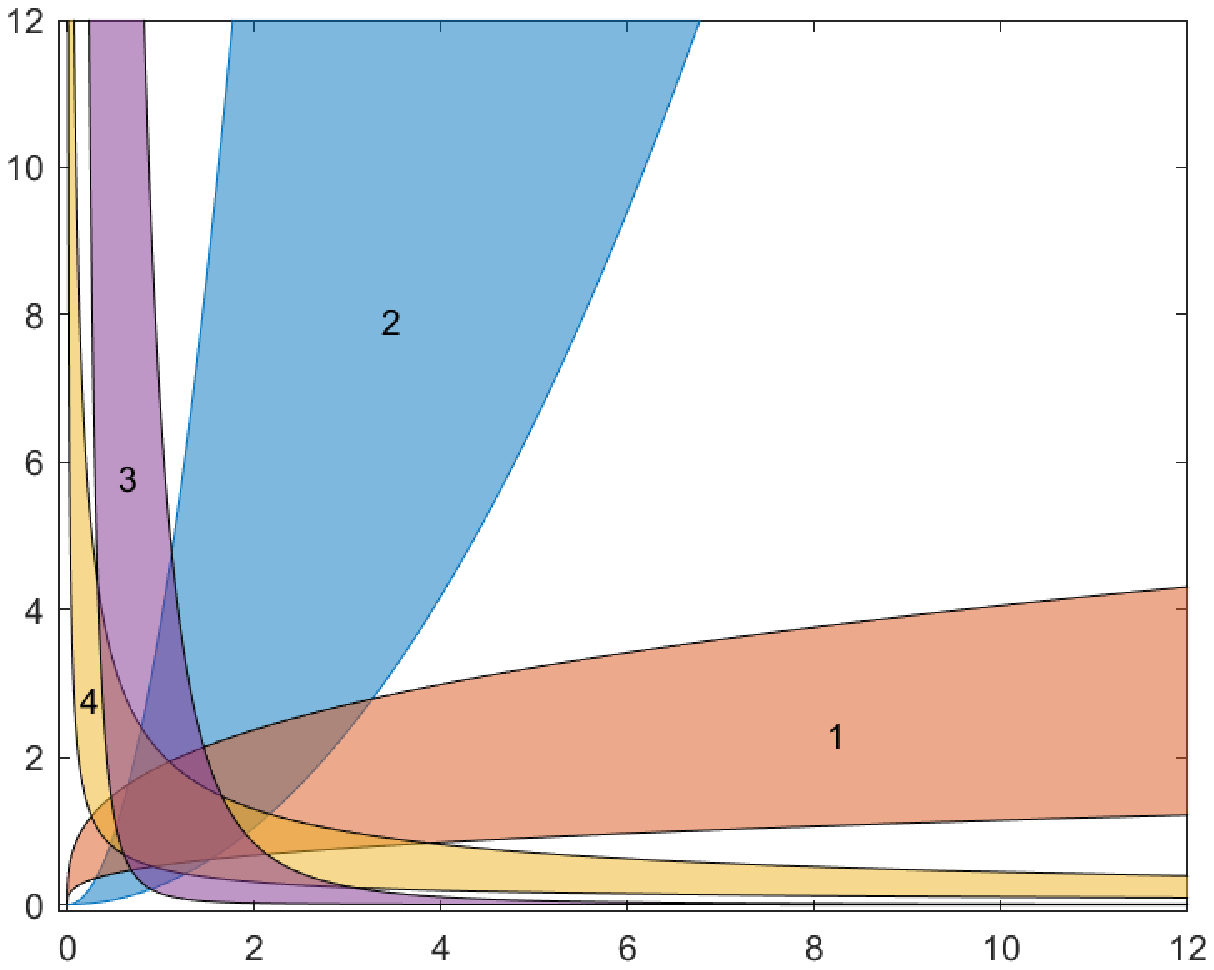}
\caption{\small{This figure is obtained by exponentiating the lines in Figure~\ref{fig:rotation_log}.}} 
\label{fig:rotation_exp}
\end{subfigure}
\caption{\small{We choose a natural ordering on the fat lines in Figure~\ref{fig:rotation_log} starting from the line of the smallest positive slope to the line of the smallest negative slope in a counter-clockwise sense. This induces a corresponding ordering on the uncertainty regions as depicted in Figure~\ref{fig:rotation_exp}}. The notion of going along the attracting direction of the next uncertainty region implies that we traverse the uncertainty regions in a certain order. This in turn induces an ordering on the slopes of the attracting directions along our path, which is the idea behind Remark~\ref{rem:monotonic}.}  
\label{fig:ordering_uncert_regions}
\end{figure}

\begin{rem}\label{rem:monotonic} 
Note that the construction of $M^{\mathcal{F}}_{\delta}$ proceeds by building line segments that go in the attracting direction of the next uncertainty region starting from the points $A_0=(N,M)$ and $C_0=(n,m)$. This puts a natural order on the slopes of the line segments that constitute $M^{\mathcal{F}}_{\delta}$, as remarked in the caption of Figure~\ref{fig:ordering_uncert_regions}. In particular, let $B_p$ be the point in the construction of $I_4$ with the maximum $x$-coordinate. Then, we have the following:
\begin{enumerate}
\item $slope(l_{B_p,B_{p-1}}) < \cdots < slope(l_{B_1,A_0}) < slope(l_{A_0,A_1}) < \cdots < slope(l_{A_{q-1},A_q})$.
\item $slope(l_{C_r,C_{r-1}}) < \cdots < slope(l_{C_1,C_0}) < slope(l_{C_0,D_1}) < \cdots < slope(l_{D_{s-1},D_s}) < 0$.
\item $0<slope(l_{B_u,B_{u-1}}) < \cdots < slope(l_{B_{p+1},B_p})$.
\end{enumerate}

\end{rem}

\begin{lem}\label{lem:curved_part}
The cone corresponding to the right-hand side of the toric differential inclusion $\mathcal{T}_{\mathcal{F},\delta}$ at any point on the curves $\mathcal{C}_{i_1},\mathcal{C}_{i_2},\mathcal{C}_{i_3},\mathcal{C}_{i_4}$ is contained in the interior of the uncertainty regions of these curves.
\end{lem}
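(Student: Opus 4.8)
The plan is to analyze what the right-hand side $F_{\mathcal{F},\delta}(\vX)$ looks like near a point lying on one of the curves $\mathcal{C}_{i_1},\dots,\mathcal{C}_{i_4}$, and to show that the associated half-space points into the interior of the corresponding uncertainty region. I will work in logarithmic space, so that each curve $\mathcal{C}_{i_k}$ becomes a line parallel to the line generator $q_{i_k}Y = p_{i_k}X$ at distance $\delta$, bounding the strip (uncertainty region) around that generator. First I would recall, via Remark~\ref{rem:tdi}, that the behavior of $F_{\mathcal{F},\delta}$ at a point $\vx$ is governed by $r(\vx)$: if $\vx$ lies on the curve $\mathcal{C}_{i_k}$ bounding the uncertainty region $UC_{i_k}$ and $r(\vx)=1$ (i.e.\ $\vx$ is in no other uncertainty region), then $F_{\mathcal{F},\delta}(\vX)$ is precisely the half-space $\{p_{i_k}Y + q_{i_k}X \geq 0\}$ or $\{p_{i_k}Y + q_{i_k}X \leq 0\}$, whose bounding hyperplane is orthogonal to the generator $q_{i_k}Y=p_{i_k}X$. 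The key geometric observation is then that the inward normal to this half-space — restricted to the boundary curve — is exactly an attracting direction of $UC_{i_k}$ (Definition~\ref{def:attracting_direction}), hence points toward the interior of $UC_{i_k}$; this is precisely the claim.

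The main steps, in order, would be: (1) Fix a point $P$ on, say, $\mathcal{C}_{i_1}$, and show $r(P)=1$, i.e.\ $P$ lies in the interior of no uncertainty region other than (the closure of) $UC_{i_1}$. This uses the construction of Step~5: the curves $\mathcal{C}_{i_1},\mathcal{C}_{i_2},\mathcal{C}_{i_3},\mathcal{C}_{i_4}$ are the \emph{outer} boundary arcs of the four extremal uncertainty regions (those indexed by $i_1,i_2,i_3,i_4$ as defined in Step~1), and in the large-$\delta$ limit these outer boundaries are swept out far enough from the origin that the other uncertainty regions — whose strips have fixed angular width but are separated in slope — do not reach them. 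I would make this precise by comparing, along the outer boundary line of $UC_{i_1}$ in logarithmic space, the signed distance to every other generator line and showing it exceeds $\delta$ for the relevant portion, using Assumption~\ref{assump:positive_negative_generators} (which guarantees $i_1$ is the generator of maximal slope in $S_1$, etc., so its outer strip boundary is genuinely ``outermost''). (2) Invoke Remark~\ref{rem:tdi}(i) to conclude $F_{\mathcal{F},\delta}(\log P)$ is the half-space $\{p_{i_1}Y + q_{i_1}X \geq 0\}$ (the sign determined by which side $P$ is on; on the outer boundary of $UC_{i_1}$ the relevant inequality is the one whose half-space contains the strip). (3) Identify the cone (a half-space, hence in particular containing an open set of directions transverse to $\mathcal{C}_{i_1}$) and observe that its bounding hyperplane $\{p_{i_1}Y + q_{i_1}X = 0\}$ is orthogonal to the generator line $\{q_{i_1}Y = p_{i_1}X\}$; therefore the open half-space contains the direction orthogonal to the generator that points from the outer boundary curve back into the strip $UC_{i_1}$ — which is exactly one of the two attracting directions. (4) Repeat verbatim for $\mathcal{C}_{i_2},\mathcal{C}_{i_3},\mathcal{C}_{i_4}$, noting the signs flip appropriately depending on the quadrant / which connected component of the complement the curve bounds.

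The main obstacle I expect is Step~(1): rigorously establishing that along the curves $\mathcal{C}_{i_1},\dots,\mathcal{C}_{i_4}$ one has $r(\vx)=1$ (no overlap with other uncertainty regions) in the large-$\delta$ regime. This is where the hypothesis ``$\delta$ large'' and the extremality of the indices $i_1,i_2,i_3,i_4$ do the real work: one must rule out that the outer boundary arc of an extremal uncertainty region dips back into another strip. I would handle this by a distance estimate in logarithmic space — the outer boundary of $UC_{i_k}$ is the line $q_{i_k}Y = p_{i_k}X + \delta_{i_k}$ (on the appropriate side), and for any other generator $q_jY = p_jX$ the perpendicular distance from a point on this line, restricted to the cone $C_{l_1,l_4}$ or $C_{l_2,l_3}$ region that the construction actually traverses, is bounded below by a quantity that grows like $\delta$ (times a positive constant depending only on the angular gaps between generators), hence eventually exceeds $\delta$. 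A clean way to package this is to note that exterior to $C_{l_1,l_4}\cup C_{l_2,l_3}$ and the four extremal strips, and once $\delta$ is large, the arcs $\mathcal{C}_{i_k}$ lie in the ``$r=0$ or $r=1$'' region by the same angular-separation argument used implicitly throughout Section~\ref{sec:construction}; combined with the fact that a point on $\mathcal{C}_{i_k}$ is by definition on the boundary of $UC_{i_k}$, this forces $r(\vx)=1$ there. Everything after Step~(1) is elementary plane geometry about orthogonality of a half-space normal to a line and the definition of attracting direction.
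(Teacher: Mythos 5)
There is a genuine gap, and it is in the logical direction of the containment. What the lemma asserts (and what Proposition~\ref{lem:invariant} needs from it) is that \emph{every} vector in the cone $F_{\mathcal{F},\delta}(\vX)$ at a point of $\mathcal{C}_{i_1},\dots,\mathcal{C}_{i_4}$ points weakly inward, i.e.\ $\vw\cdot\vn\leq 0$ for all $\vw\in F_{\mathcal{F},\delta}(\vX)$, where $\vn$ is the outward normal to the boundary arc. Your step (3) instead shows that $F_{\mathcal{F},\delta}(\vX)$ \emph{contains} one inward-pointing direction (an attracting direction). That is the converse containment and does not yield the lemma. The geometry also works against you here: by Remark~\ref{rem:tdi}(i) the half-space is $\{q_{i_1}X+p_{i_1}Y\geq 0\}$ (or $\leq 0$), whose bounding line consists precisely of the directions orthogonal to the generator $q_{i_1}Y=p_{i_1}X$. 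Hence \emph{both} attracting directions of Definition~\ref{def:attracting_direction} --- the one pointing into the strip and the one pointing out of it --- lie on the boundary of this closed half-space, so the half-space certainly contains directions that exit the uncertainty region; and its inward normal is parallel to the generator, not orthogonal to it as you claim. So even granting your step (1), the conclusion you draw in steps (2)--(3) is not the statement being proved.

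The paper's argument is entirely different and its real content is absent from your proposal. It works with the tangent lines to the curves in the exponentiated coordinates: it shows the tangent slopes are monotone along $\mathcal{C}_{i_1}$ (increasing toward $P_{i_1,i_2}$, using $y'=\frac{p}{q}y^{1-q/p}e^{-\sqrt{p^2+q^2}\delta/p}$ with $p<0<q$) and along $\mathcal{C}_{i_2}$ (decreasing toward $P_{i_1,i_2}$), reduces the claim to the single corner point $P_{i_1,i_2}$ where the relevant right-hand side is the proper cone of Remark~\ref{rem:tdi}(iii), and then uses the fact that $|y'|\to\infty$ as $\delta\to\infty$ --- while the right-hand-side cone is determined by the $(p_i,q_i)$ alone and is independent of $\delta$ --- to conclude that the cone spanned by the two tangents at $P_{i_1,i_2}$ eventually contains it. Your identified ``main obstacle'' (establishing $r(\vx)=1$ along the arcs) is a legitimate point the paper glosses over, but it is not where the difficulty of this lemma lies; the large-$\delta$ tangent-cone estimate at the corner points is, and your proposal does not address it.
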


\begin{proof}
We will present a proof for the case of $\mathcal{C}_{i_1}$ and $\mathcal{C}_{i_2}$, the case of $\mathcal{C}_{i_3}$ and $\mathcal{C}_{i_4}$ will follow analogously. Let us denote the intersection point of $\mathcal{C}_{i_1}$ and $\mathcal{C}_{i_2}$ by $P_{i_1,i_2}$. We now show the following:
\begin{enumerate}
\item The slopes of the tangents to the curve starting from the point $A_q$ increase monotonically along the curve until the point $P_{i_1,i_2}$. To see this, note that since we stop building line segments in the construction of $M^{\mathcal{F}}_{\delta}$ at the uncertainty region with index $i_1$, Equation~(\ref{eq:exponential_curves}) gives us that $\mathcal{C}_{i_1}$ lies on the curve of the form $y^q = e^{-\sqrt{p^2+q^2}\delta}x^p$, where $p$ and $q$ are such that $p<0$ and $q>0$. Differentiating the equation of the curve $\mathcal{C}_{i_1}$, we get
\begin{eqnarray}\label{eq:slope_i_1}
y' = \frac{p}{q}y^{(1-\frac{q}{p})}e^{\frac{-\sqrt{p^2+q^2}\delta}{p}}  
\end{eqnarray}
Therefore, the slope of the tangent to the curve $\mathcal{C}_{i_1}$ increases with decrease in $y$ and the desired conclusion follows.

\begin{figure}[h!]
\centering
\includegraphics[scale=0.6]{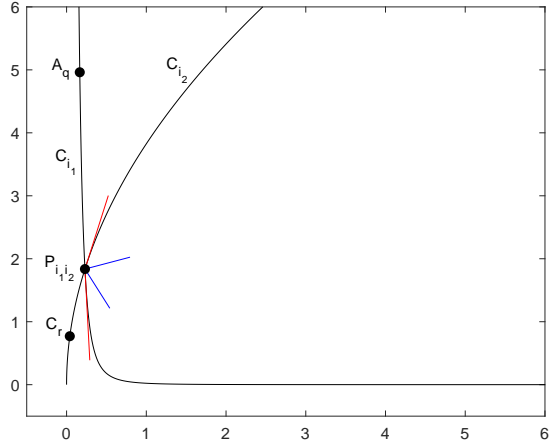}
\caption{\small The right-hand side of the toric differential inclusion $\mathcal{T}_{\mathcal{F},\delta}$ evaluated at the point $P_{i_1,i_2}$ (denoted by the blue cone) is contained in the red cone formed by the tangents to the curves $\mathcal{C}_{i_1}$ and $\mathcal{C}_{i_2}$ at the point $P_{i_1,i_2}$.}
\label{fig:curved_parts}
\end{figure} 

\item The slopes of the tangents to the curve starting from the point $C_r$ decrease monotonically along the curve until the point $P_{i_1,i_2}$. To see this, note that since we stop building line segments in the construction of $M^{\mathcal{F}}_{\delta}$ at the uncertainty region with index $i_2$, Equation~(\ref{eq:exponential_curves}) gives us that $\mathcal{C}_{i_2}$ lies on the curve of the form $y^{\tilde{q}} = e^{\sqrt{{\tilde{p}}^2+{\tilde{q}}^2}\delta}x^{\tilde{p}}$, where $\tilde{p}>0,\tilde{q}>0$ and $\frac{\tilde{q}}{\tilde{p}}>1$. Differentiating the equation of the curve $\mathcal{C}_{i_2}$, we get
\begin{eqnarray}\label{eq:slope_i_2}
y' = \frac{\tilde{p}}{\tilde{q}}y^{(1-\frac{\tilde{q}}{\tilde{p}})}e^{\frac{\sqrt{{\tilde{p}}^2+ {\tilde{q}}^2}\delta}{\tilde{p}}}  
\end{eqnarray} 
Therefore, the slope of the tangent to the curve $\mathcal{C}_{i_2}$ decreases with increase in $y$ and we are done.

Consider Figure~\ref{fig:curved_parts}, which shows a sketch of the regions $\mathcal{C}_{i_1}$ and $\mathcal{C}_{i_2}$. Due to the monotonicity of the slopes of the tangents to $\mathcal{C}_{i_1}$ and $\mathcal{C}_{i_2}$ as argued above, it suffices to show that the cone corresponding to the right-hand side of $\mathcal{T}_{\mathcal{F},\delta}$(marked in blue) lies inside the cone formed by the tangents to the curves $\mathcal{C}_{i_1}$ and $\mathcal{C}_{i_2}$ at the point $P_{i_1,i_2}$(marked in red). Note that from Equations~(\ref{eq:slope_i_1}) and~(\ref{eq:slope_i_2}), we get that $|y'|\to\infty$ as $\delta\to\infty$. Therefore, the slopes of the generators of the red cone can be made as large as possible so that it contains the cone corresponding to the right-hand side of $\mathcal{T}_{\mathcal{F},\delta}$.

\end{enumerate}

\end{proof}

\begin{lem}
The line segments $l_1,l_2,l_3,l_4$ lie inside $M^{\mathcal{F}}_{\delta}$.
\end{lem}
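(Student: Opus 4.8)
The plan is to prove the statement in full for $l_1$, and observe that $l_4$ follows by the identical argument with the two arcs leaving the vertex $(N,M)$ interchanged, while $l_2$ and $l_3$ follow with $(n,m)$ in place of $(N,M)$. Throughout I would work in logarithmic coordinates, where the construction is piecewise linear, and use Remark~\ref{rem:monotonic} as the main engine.

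First I would record that the polygonal arc $I_1$, traversed as $A_0=(N,M)\to A_1\to\cdots\to A_q$, is \emph{convex}: by Remark~\ref{rem:monotonic}(1) its consecutive segment slopes are strictly monotone, and since each segment is built along the attracting direction of the next uncertainty region, with the uncertainty regions visited in the fixed counter-clockwise cyclic order, the arc turns in one consistent sense. Hence $I_1$ together with its chord $l_1=\overline{A_0A_q}$ bounds a convex ``cap'' $R_1$, which lies on the side of $I_1$ towards which the arc is concave. The next point is that this concave side is precisely the side on which $\mathrm{int}(M^{\mathcal{F}}_{\delta})$ lies locally along $I_1$ — again forced by the orientation of the attracting directions used in the construction. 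So, modulo the global step below, $R_1$ is a piece of $M^{\mathcal{F}}_{\delta}$, and in particular $l_1\subseteq\partial R_1$ would lie in $M^{\mathcal{F}}_{\delta}$.

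Next I would check transversality of $l_1$ at its two endpoints. At $A_0=(N,M)$ the curve $C^{\mathcal{F}}_{\delta}$ consists of the initial segment of $I_1$ (towards $A_1$) and the initial segment of $I_4$ (towards $B_1$), and $\mathrm{int}(M^{\mathcal{F}}_{\delta})$ fills the angular sector between them; by the slope monotonicity along $I_1$ and along $I_4$ the directions $A_0\to A_q$ and $A_0\to B_u$ are rotations of $A_0\to A_1$ and $A_0\to B_1$ strictly into that sector, so $l_1$ and $l_4$ leave $(N,M)$ into the interior, and $C_{l_1,l_4}$ is contained in that sector near its vertex. At the other end, $A_q$ lies on the curved piece $\mathcal{C}_{i_1}$; for large $\delta$ this piece is nearly horizontal/vertical (Remark~\ref{rem:knee}) and its inward side is controlled by Lemma~\ref{lem:curved_part}, so $l_1$ meets $\mathcal{C}_{i_1}$ at $A_q$ from the interior side. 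Combined with the convexity of $I_1$, this shows $l_1$ meets $I_1\cup\mathcal{C}_{i_1}$ only at its endpoints.

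The hard part will be the final, global step: showing that $l_1$ (equivalently, the cap $R_1$) does not run into any \emph{non-adjacent} piece of $C^{\mathcal{F}}_{\delta}$ — namely $I_2,I_3,I_4$ or $\mathcal{C}_{i_2},\mathcal{C}_{i_3},\mathcal{C}_{i_4}$. Here one must invoke the global structure of the construction: that $C^{\mathcal{F}}_{\delta}$ is a simple closed curve bounding $M^{\mathcal{F}}_{\delta}$, that the pieces occur along it in a fixed cyclic order, and — crucially — that $\delta$ is large, so (Remark~\ref{rem:knee}, Remark~\ref{rem:coordinates_points}, Assumption~\ref{assump:positive_negative_generators}) the curved pieces are pushed far out, the four vertices are far apart, and the slope ranges of the polygonal pieces are well separated. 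With these facts the thin cap $R_1$ occupies an angular range about $(N,M)$ disjoint from all the other pieces, whence $R_1\subseteq M^{\mathcal{F}}_{\delta}$ and therefore $l_1\subseteq M^{\mathcal{F}}_{\delta}$. Running the same argument for $I_4$ about $(N,M)$ and for $I_2,I_3$ about $(n,m)$ then finishes the proof. The convexity input and the local transversality at the vertices are routine; isolating exactly the ``large $\delta$'' estimates needed to rule out global crossings is the real work.
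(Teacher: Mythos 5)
Your core argument is exactly the paper's: by Remark~\ref{rem:monotonic} the slopes of consecutive segments along $I_1$ are monotone, so the polygonal arc, viewed as the graph of a function, is concave and therefore lies on or above its chord $l_1$, and the same reasoning handles $l_2,l_3$ and (after exchanging the roles of the coordinates) $l_4$. The paper's proof stops at that point --- it performs none of the endpoint-transversality checks or the global non-crossing analysis that you flag as ``the real work'' --- so the additional large-$\delta$ considerations you sketch are extra caution on top of the same method rather than a different route.
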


\begin{proof}
We will first show that $l_1$ lies inside $M^{\mathcal{F}}_{\delta}$. Consider the path $\mathcal{P}_1=\{A_q,A_{q-1},\cdots, A_0=(N,M)\}$ as a function $f(x)$ defined on the interval $[x_0,x_1]$ with $A_q = (x_0,f(x_0))$ and $A_0 = (x_1,f(x_1))$. By Remark~\ref{rem:monotonic}, we have the following inequalities: $slope(l_{A_0,A_1}) < \cdots < slope(l_{A_{q-1},A_q})$. This implies that the function $f$ is concave. Therefore, for any two points in the domain of $f$, the function always lies on or above the line segment joining these two points. In particular, if we choose the points to be $A_q$ and $A_0=(N,M)$, we get that  the points $A_q,A_{q-1},...,A_0=(N,M)$ lie on or above the line segment $l_1$, implying that $l_1$ lies inside $M^{\mathcal{F}}_{\delta}$. The same argument can be repeated for the line segments $l_2$ and $l_3$. For the line segment $l_4$, a slight modification of the above argument works by considering the path $\mathcal{P}_4=\{A_0=(N,M),\cdots,B_u\}$ as a function $g(y)$ defined on the interval $[y_0,y_1]$ with $A_0 = (y_0,g(y_0))$ and $B_u = (y_1,g(y_1))$.
\end{proof}

\begin{lem}\label{lem:uc_line_segment_slope}
Let $l^{a,b}_{N,M}$ and $l^{a,b}_{n,m}$ denote the line segments connecting the points $(N,M)$ to $(a,b)$ and $(n,m)$ to $(a,b)$ respectively, where $(a,b)\in S^{\rm{uc}}$. Note from Remark~\ref{rem:para_inter} that $N,M,a,b$ depend on $\delta$. Then
\begin{enumerate}
\item Either $\displaystyle\lim_{\delta\rightarrow\infty}|slope(l^{a,b}_{N,M})|= 0$ for every $(a,b)\in S^{\rm{uc}}$ or $\displaystyle\lim_{\delta\rightarrow\infty}|slope(l^{a,b}_{N,M})|= \infty$ for every $(a,b)\in S^{\rm{uc}}$.
\item Either $\displaystyle\lim_{\delta\rightarrow\infty}|slope(l^{a,b}_{n,m})|= 0$ or $\displaystyle\lim_{\delta\rightarrow\infty}|slope(l^{a,b}_{n,m})|= \infty$ or $\displaystyle\lim_{\delta\rightarrow\infty}|slope(l^{a,b}_{n,m})|= 1$ for $(a,b)\in S^{\rm{uc}}$.
\end{enumerate}
\end{lem}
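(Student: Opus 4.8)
The plan is to reduce both statements to an elementary comparison of exponential growth rates, using the key fact from Remark~\ref{rem:para_inter} that every intersection point of the uncertainty regions has coordinates that are \emph{exactly} of the form $(e^{C\delta},e^{D\delta})$ with $C,D$ independent of $\delta$. Accordingly, write $(N,M)=(e^{N'\delta},e^{M'\delta})$, $(n,m)=(e^{n'\delta},e^{m'\delta})$, and, for a point $(a,b)\in S^{\rm uc}$ distinct from the relevant base point, $(a,b)=(e^{a'\delta},e^{b'\delta})$. Then
\[
|slope(l^{a,b}_{N,M})| = \left|\frac{e^{M'\delta}-e^{b'\delta}}{e^{N'\delta}-e^{a'\delta}}\right|,
\qquad
|slope(l^{a,b}_{n,m})| = \left|\frac{e^{m'\delta}-e^{b'\delta}}{e^{n'\delta}-e^{a'\delta}}\right|.
\]
Since these are exact exponentials with no lower-order corrections, $e^{\alpha\delta}-e^{\beta\delta}$ is identically $0$ when $\alpha=\beta$ and otherwise behaves like $\pm e^{\max(\alpha,\beta)\delta}$ as $\delta\to\infty$.

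From this the trichotomy in part (2) is immediate by case analysis on the ratio $|slope(l^{a,b}_{n,m})|$: if $m'=b'$ the numerator vanishes identically and the slope is $0$; if $m'\neq b'$ but $n'=a'$ the segment is vertical and the slope is $\infty$; and if $m'\neq b'$ and $n'\neq a'$ the ratio is comparable to $e^{(\max\{m',b'\}-\max\{n',a'\})\delta}$, whose limit is $0$ or $\infty$ unless $\max\{m',b'\}=\max\{n',a'\}$, in which case numerator and denominator are both comparable to $\pm e^{\lambda\delta}$ for the common exponent $\lambda$ and the ratio of absolute values tends to exactly $1$. For part (1) the extra input is the defining property of $(N,M)$: since $M=y_{\max}=\max(x_{\max},y_{\max})$ is the largest coordinate occurring among all points of $S^{\rm uc}$, we have $M'\geq N'$ and $M'\geq a',b'$ for every $(a,b)$. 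I would then invoke Assumption~\ref{assump:positive_negative_generators} together with the explicit intersection coordinates of Step~1 (Equation~(\ref{eq:expo_intersection_points})) to upgrade this to \emph{strict} maximality — i.e.\ $M'>N'$ and $M'$ is attained only by the $y$-coordinate of $(N,M)$ — so that in $|slope(l^{a,b}_{N,M})|$ the numerator has strictly larger dominant exponent ($M'$) than the denominator ($\max\{N',a'\}<M'$); hence $|slope(l^{a,b}_{N,M})|\to\infty$ for \emph{every} $(a,b)$, which is the second alternative of part (1), uniformly in $(a,b)$. (The ``$0$ for every $(a,b)$'' alternative is vacuous under Assumption~\ref{assump:positive_negative_generators}, since there are always intersection points whose $y$-coordinate lies strictly below $M$; it is retained for uniformity with the degenerate slope-$0$ and slope-$\infty$ cases of Section~\ref{sec:special_cases}.)

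The exponential bookkeeping above is routine; the step I expect to be the main obstacle is proving the strict maximality of $M'$ (and the analogous control of $\max\{n',m'\}$ for $(n,m)$) directly from the slope hypotheses and the closed-form coordinates of Step~1 — that is, verifying that the topmost intersection point of the uncertainty regions really does have its $y$-exponent strictly above its own $x$-exponent and above all exponents of every other intersection point. Once that structural fact is established, both parts follow at once from the estimate $e^{\alpha\delta}-e^{\beta\delta}=\pm e^{\max(\alpha,\beta)\delta}(1+o(1))$, the tie case collapsing cleanly to the value $1$ precisely because the coordinates carry no sub-exponential terms.
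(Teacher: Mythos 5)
Your proposal follows essentially the same route as the paper's proof: parametrize every point of $S^{\rm uc}$ as $(e^{C\delta},e^{D\delta})$ via Remark~\ref{rem:para_inter} and compare dominant exponents in the numerator and denominator of each slope, with the tie case giving the limit $1$. The only real deviation is in part (1), where the paper simply runs both cases ($M$ maximal among coordinates of $S^{\rm uc}$ gives $\infty$; $N$ maximal gives $0$) rather than declaring one alternative vacuous --- your stated reason for vacuousness is not the right one (the $0$ alternative covers the symmetric situation where the $x$-coordinate is the globally maximal one, not the existence of points with smaller $y$-coordinate), but since the conclusion is a disjunction this does not affect correctness. The ``strict maximality of the dominant exponent'' issue you flag as the main obstacle is indeed glossed over, but the paper's own proof makes the same implicit assumption (ties are only addressed by the footnote in Step~2), so on this point you are no worse off than the original.
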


\begin{proof}
Since $(a,b)\in S^{\rm{uc}},(N,M)\in S^{\rm{uc}}$ and $(n,m)\in S^{\rm{uc}}$, by Remark~\ref{rem:para_inter}, one can parametrize them as follows: $(N,M) = (e^{N'\delta},e^{M'\delta}),(n,m) = (e^{n'\delta},e^{m'\delta})$ and $(a,b)=(e^{a'\delta},e^{b'\delta})$ for some $M',N',m',n',a',b'\in \mathbb{R}$. We split our analysis into the following cases:\\
Case I: Consider the line segment $l^{a,b}_{N,M}$, where $(a,b)\in S^{\rm{uc}}$. Then, we have
\begin{eqnarray}
|slope(l^{a,b}_{N,M})| = \bigg|\frac{e^{M'\delta}-e^{b'\delta}}{e^{N'\delta}-e^{a'\delta}}\bigg|
\end{eqnarray}
If in Step 2 of the construction of $M^{\mathcal{F}}_{\delta}$, $M$ (or equivalently $e^{M'\delta}$) turns out to be the coordinate with the maximum value among points in $S^{\rm uc}$, then we have
\begin{eqnarray}
\displaystyle\lim_{\delta\rightarrow\infty}|slope(l^{a,b}_{N,M})| = \displaystyle\lim_{\delta\rightarrow\infty}\bigg|\frac{e^{M'\delta}-e^{b'\delta}}{e^{N'\delta}-e^{a'\delta}}\bigg| = \displaystyle\lim_{\delta\rightarrow\infty}\bigg|\frac{1-e^{(b'-M')\delta}}{e^{(N'-M')\delta}-e^{(a'-M')\delta}}\bigg| =\infty
\end{eqnarray}
Else, if in Step 2 of the construction of $M^{\mathcal{F}}_{\delta}$, $N$ (or equivalently $e^{N'\delta}$) turns out to be the coordinate with the maximum value among points in $S^{\rm uc}$, then we have 
\begin{eqnarray}
\displaystyle\lim_{\delta\rightarrow\infty}|slope(l^{a,b}_{N,M})| = \displaystyle\lim_{\delta\rightarrow\infty}\bigg|\frac{e^{M'\delta}-e^{b'\delta}}{e^{N'\delta}-e^{a'\delta}}\bigg| = \displaystyle\lim_{\delta\rightarrow\infty}\bigg|\frac{e^{(M'- N')\delta}-e^{(b'- N')\delta}}{1-e^{(a'-N')\delta}}\bigg| = 0.
\end{eqnarray}
Case II: Consider the line segment $l^{a,b}_{n,m}$, where $(a,b)\in S^{\rm{uc}}$. Then, we have
\begin{eqnarray}
|slope(l^{a,b}_{n,m})| = \bigg|\frac{e^{m'\delta}-e^{b'\delta}}{e^{n'\delta}-e^{a'\delta}}\bigg|
\end{eqnarray}
Let us assume that in Step 2 of the construction, we get $m=\min(\max(x_1,y_1),\max(x_2,y_2),...\\,\max(x_p,y_p))$, where $(x_i,y_i)\in S^{\rm uc}$. If $a'<b'$, we have $n'<m'<b'$ and   
\begin{eqnarray}
\displaystyle\lim_{\delta\rightarrow\infty}|slope(l^{a,b}_{n,m})|= \displaystyle\lim_{\delta\rightarrow\infty}\bigg|\frac{e^{m'\delta}-e^{b'\delta}}{e^{n'\delta}-e^{a'\delta}}\bigg| = \displaystyle\lim_{\delta\rightarrow\infty}\bigg|\frac{e^{(m'-b')\delta}-1}{e^{(n'-b')\delta} - e^{(a'-b')\delta}}\bigg| =\infty.
\end{eqnarray}
Else if $a'>b'$, then we have $n' < m' < a'$ and   
\begin{eqnarray}
\displaystyle\lim_{\delta\rightarrow\infty}|slope(l^{a,b}_{n,m})|= \displaystyle\lim_{\delta\rightarrow\infty}\bigg|\frac{e^{m'\delta}-e^{b'\delta}}{e^{n'\delta}-e^{a'\delta}}\bigg| = \displaystyle\lim_{\delta\rightarrow\infty}\bigg|\frac{e^{(m'-a')\delta} - e^{(b'-a')\delta}}{e^{(n'-a')\delta} - 1}\bigg| = 0.
\end{eqnarray}
Else if $a'=b'$, then we have $n' < m' < a'=b'$ and   
\begin{eqnarray}
\displaystyle\lim_{\delta\rightarrow\infty}|slope(l^{a,b}_{n,m})|= \displaystyle\lim_{\delta\rightarrow\infty}\bigg|\frac{e^{m'\delta}-e^{a'\delta}}{e^{n'\delta}-e^{a'\delta}}\bigg| = \displaystyle\lim_{\delta\rightarrow\infty}\bigg|\frac{e^{(m'-a')\delta} - 1}{e^{(n'-a')\delta} - 1}\bigg| = 1.
\end{eqnarray}
If in Step 2 of the construction, we get $n=\min(\max(x_1,y_1),\max(x_2,y_2),...,\max(x_p,y_p))$, a similar calculation shows
that $\displaystyle\lim_{\delta\rightarrow\infty}|slope(l^{a,b}_{n,m})|=\infty$ when $a'>b',\displaystyle\lim_{\delta\rightarrow\infty}|slope(l^{a,b}_{n,m})|= 0$ when $a'<b'$ and $\displaystyle\lim_{\delta\rightarrow\infty}|slope(l^{a,b}_{n,m})|= 1$ when $a'=b'$.

\end{proof}	

The next lemma shows that the cones $C_{l_2,l_3}$ and $C_{l_1,l_4}$ contain certain horizontal and vertical rays. Refer to Figure~\ref{fig:red_cone} for the proof of the next lemma.

\begin{lem}\label{lem:cone_uc}
For $\delta$ large enough, we have the following:
\begin{enumerate}
\item The cone $C_{l_2,l_3}$ contains the set $(n,m)+\mathbb{R}^2_{>0}$.
\item The cone $C_{l_1,l_4}$ either contains a fixed conical neighbourhood of the horizontal ray $(N,M)+\mathbb{R}_{<0}\times\{0\}$ or contains a fixed conical neighbourhood of the vertical ray $(N,M)+\{0\}\times\mathbb{R}_{<0}$ or contains both.
\end{enumerate}
\end{lem}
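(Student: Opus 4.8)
The plan is to analyze the two cones separately, using the parametrization $(N,M)=(e^{N'\delta},e^{M'\delta})$, $(n,m)=(e^{n'\delta},e^{m'\delta})$ from Remark~\ref{rem:para_inter}, together with the slope computations of Lemma~\ref{lem:uc_line_segment_slope} and the coordinate estimate of Lemma~\ref{lem:knee_coordinate}.

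For part (1), I would first recall that by Step 2 of the construction, $m=\min_i\max(x_i,y_i)$ is the \emph{smallest} of the ``$\max$-coordinates'' among intersection points, so $(n,m)$ sits in the ``inner corner'' of the picture and every other point of $S^{\rm uc}$ has both coordinates $\geq$ roughly $m$ in the relevant sense. The line segments $l_2$ and $l_3$ join $(n,m)$ to $C_r$ and $D_s$ respectively, where $C_r$ lies on the uncertainty region of index $i_2$ (positive slope $>1$) and $D_s$ lies on the uncertainty region of index $i_3$ (positive slope $<1$). By Lemma~\ref{lem:uc_line_segment_slope}(2), the slopes of $l_2$ and $l_3$ have limits in $\{0,1,\infty\}$ as $\delta\to\infty$; I would pin down which limit occurs for each by tracking the signs $a'-b'$ for the relevant endpoints (the $i_2$ endpoint has $b'>a'$ type behavior giving slope $\to\infty$, the $i_3$ endpoint gives slope $\to 0$), so that $l_2$ becomes (almost) vertical and $l_3$ becomes (almost) horizontal, both emanating from $(n,m)$ into the region above and to the right. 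Since $(n,m)$ is the inner corner and the two bounding segments open up toward increasing $x$ and increasing $y$, the cone $C_{l_2,l_3}$ they span contains, for $\delta$ large, the full open first-quadrant translate $(n,m)+\mathbb{R}^2_{>0}$. The key point to make rigorous here is that the limiting slopes are approached from the correct side (i.e. $l_2$ has slope slightly larger than vertical is not possible, so it must be large positive, hence the cone genuinely swallows the vertical ray, and similarly for $l_3$ and the horizontal ray).

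For part (2), the geometry near $(N,M)$ is less symmetric because $(N,M)$ is the point of \emph{maximal} coordinate. By Lemma~\ref{lem:uc_line_segment_slope}(1), there is a dichotomy: either $|slope(l^{a,b}_{N,M})|\to 0$ for all $(a,b)\in S^{\rm uc}$, or $|slope(l^{a,b}_{N,M})|\to\infty$ for all such points; which case occurs depends only on whether $M$ or $N$ is the globally maximal coordinate among intersection points (equivalently whether $M'\ge N'$ fails). In the first case both $l_1$ (to $A_q$) and $l_4$ (to $B_u$) have slope $\to 0$; here I would invoke Lemma~\ref{lem:knee_coordinate} to control the $x$-extent of $B_u$ ($x_u=O(M)$) so that the nearly-horizontal segment $l_4$ actually extends to the \emph{left} of $(N,M)$ far enough, and combined with $l_1$ the cone $C_{l_1,l_4}$ contains a fixed conical neighborhood of the horizontal ray $(N,M)+\mathbb{R}_{<0}\times\{0\}$. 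In the second case, the segments are nearly vertical and a symmetric argument (using the analogue of Lemma~\ref{lem:knee_coordinate} with $x$ and $y$ interchanged, i.e. Remarks~\ref{rem:knee} and~\ref{rem:coordinates_points}) shows $C_{l_1,l_4}$ contains a fixed conical neighborhood of the vertical ray $(N,M)+\{0\}\times\mathbb{R}_{<0}$; the ``or both'' clause covers the degenerate situation where both $l_1$ and $l_4$ degenerate simultaneously so that the cone is essentially a half-plane.

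The main obstacle I anticipate is \emph{orientation bookkeeping}: Lemma~\ref{lem:uc_line_segment_slope} only controls $|slope|$ in the limit, so I must argue carefully from Remark~\ref{rem:monotonic} (the monotone ordering of slopes along the polygonal path) and from which quadrant each terminal point $A_q,B_u,C_r,D_s$ lies in (Remark~\ref{rem:coordinates_points}) to determine the actual signed directions of $l_1,l_2,l_3,l_4$ and hence that the cones open in the asserted directions rather than their opposites. Once the signed directions are fixed, the containment statements are immediate from the fact that a cone spanned by a ray of slope $\to 0^{-}$ (pointing left) and any other ray not in the lower half-plane contains a neighborhood of the negative horizontal ray, and analogously for the vertical case and for the first-quadrant translate in part (1).
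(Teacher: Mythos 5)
Your overall strategy --- determine the limiting directions of $l_1,l_2,l_3,l_4$ and read off the cone containments --- is reasonable in outline, but two of your key steps do not go through as stated. First, you repeatedly invoke Lemma~\ref{lem:uc_line_segment_slope} to control the slopes of $l_1,l_2,l_3,l_4$, but that lemma only applies to segments joining $(N,M)$ or $(n,m)$ to points of $S^{\rm uc}$; the endpoints $A_q,B_u,C_r,D_s$ are terminal points of the polygonal construction lying on the outer boundary curves of the uncertainty regions with indices $i_1,\dots,i_4$, and are not in general intersection points of two uncertainty regions. Indeed, the paper's own computation gives $\lim_{\delta\to\infty} slope(l_4) = -1/c_1$, a \emph{finite nonzero} negative number (obtained from Lemma~\ref{lem:knee_coordinate}, which yields $x_u=c_1M+c_2$), so your claimed dichotomy ``both $l_1$ and $l_4$ become nearly horizontal or both nearly vertical'' is false. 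Worse, if both segments did degenerate to the same limiting direction, the cone $C_{l_1,l_4}$ would collapse to a sliver and could not contain a \emph{fixed} conical neighbourhood of any ray; it is precisely the boundedness of $slope(l_4)$ away from the vertical, combined with $x_q<N$ (so that $l_1$ points into the left half-plane), that produces a fixed-width neighbourhood of the downward vertical ray in the case $M>N$.

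Second, for part (1) the orientation question you defer to the end is the entire content of the statement, and the paper resolves it with no limiting-slope argument at all: by Remark~\ref{rem:monotonic} every segment along the path $C_r,\dots,C_0,\dots,D_s$ has negative slope, and since the $x$-coordinate is monotone along each of the two subpaths, $slope(l_2)$ is a convex combination of the slopes $slope(l_{C_i,C_{i-1}})$ and hence satisfies $slope(l_2)<slope(l_{C_1,C_0})<0$, with $l_2$ pointing up-and-left from $(n,m)$; symmetrically $slope(l_3)<slope(l_{D_{s-1},D_s})<0$ with $l_3$ pointing down-and-right. Two rays of negative slope emanating in these opposite horizontal senses automatically span a cone containing $(n,m)+\mathbb{R}^2_{>0}$, for every sufficiently large $\delta$, with no asymptotics needed. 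I recommend replacing the appeal to Lemma~\ref{lem:uc_line_segment_slope} with this convex-combination argument, and for part (2) carrying out the explicit limit $slope(l_4)\to -1/c_1$ via Lemma~\ref{lem:knee_coordinate} and Remark~\ref{rem:coordinates_points}.
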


\begin{proof}
We will first show that the cone $C_{l_2,l_3}$ contains the set $(n,m)+\mathbb{R}^2_{>0}$. Note from Remark~\ref{rem:monotonic} that we have the following set of inequalities: $slope(l_{C_r,C_{r-1}}) < \cdots < slope(l_{C_1,C_0}) < slope(l_{C_0,D_1}) < \cdots < slope(l_{D_{s-1},D_s}) < 0$. Let us denote the point $C_0=(n,m)$ by $(x_0,y_0)$. Let $C_1=(x_1,y_1),C_2=(x_2,y_2),..., C_r=(x_r,y_r)$. Note that $y_0-y_r = \displaystyle\sum_{i=1}^r (y_{i-1} - y_i) = \displaystyle\sum_{i=1}^rslope(l_{C_{i-1,i}})(x_{i-1} - x_i)$. But we also have $y_0-y_r = slope(l_2)(x_0-x_r)= slope(l_2)\displaystyle\sum_{i=1}^r (x_{i-1} - x_i)$. This implies $slope(l_2) = \frac{\displaystyle\sum_{i=1}^rslope(l_{C_{i-1,i}})(x_{i-1} - x_i)}{\displaystyle\sum_{i=1}^r (x_{i-1} - x_i)}$. Note that in the construction of $M^{\mathcal{F}}_{\delta}$, since we build line segments starting from $(n,m)$ towards the point $C_r$ by going along the attracting direction of the next uncertainty region in the clockwise sense, we have $x_i < x_{i-1}$ for $1\leq i\leq r$. Therefore $slope(l_2)$ is a convex combination of the slopes of the line segments $l_{C_1,C_0},l_{C_2,C_1},...,l_{C_{r-1},C_r}$ and is bounded by the maximum slope among these line segments. This implies that $slope(l_2) < slope(l_{C_1,C_0}) < 0$. A similar argument along the path $\{C_0,D_1,...,D_s\}$ gives $slope(l_3) < slope(l_{D_{s-1},D_s}) < 0$. Consequently, the cone $C_{l_2,l_3}$ contains the set $(n,m)+\mathbb{R}^2_{>0}$, as required.

We now show that the cone $C_{l_1,l_4}$ either contains a fixed conical neighbourhood of the horizontal ray $(N,M)+\mathbb{R}_{<0}\times\{0\}$ or contains a fixed conical neighbourhood of the vertical ray $(N,M)+\{0\}\times\mathbb{R}_{<0}$ or contains both. In particular, we show that if in Step 2 of the construction of $M^{\mathcal{F}}_{\delta}$, $M$ turns out to be the coordinate with the maximum value, then $C_{l_1,l_4}$ contains a fixed conical neighbourhood of the vertical ray $(N,M)+\{0\}\times\mathbb{R}_{<0}$; and if $N$ turns out to be the coordinate with the maximum value, then $C_{l_1,l_4}$ contains a fixed conical neighbourhood of the horizontal ray $(N,M)+\mathbb{R}_{<0}\times\{0\}$. We present a proof for the case $M>N$, the other case follows analogously. Let us denote the coordinates of $B_u$ by $(x_u,y_u)$. By Remark~\ref{rem:knee}, we know that in the limit of large $\delta$ and for $M>N$, the path $I_4$ consists of an \emph{almost} horizontal component and an \emph{almost} vertical component separated by the line $Y=X$ in logarithmic space. Figure~\ref{fig:logarithmic_space} depicts the path $I_4$ in logarithmic space. Note that from Remark~\ref{rem:para_inter}, one can parametrize $(N,M)=(e^{N'\delta},e^{M'\delta})$ for some $M',N'\in\mathbb{R}$. By Lemma~\ref{lem:knee_coordinate}, we get that $x_u = c_1M + c_2  =  c_1e^{M'\delta} + c_2$ for some $c_1,c_2>0$. Further, by Remark~\ref{rem:coordinates_points}, we get that $y_u<1$ and $M>1$ or equivalently $M'>0$. This gives 
\begin{eqnarray}\label{eq:negative_slope_generator}
\begin{split}
\displaystyle\lim_{\delta\to\infty}slope(l_4) &= \displaystyle\lim_{\delta\to\infty}\frac{y_u-M}{x_u-N} \\
                                              &= \displaystyle\lim_{\delta\to\infty}\frac{y_u - e^{M'\delta}}{c_1e^{M'\delta} + c_2 - e^{N'\delta}} \\
                                              &= \displaystyle\lim_{\delta\to\infty}\frac{y_u/{e^{M'\delta}} - 1}{c_1 + c_2e^{-M'\delta} - e^{(N'-M')\delta}}\\
                                              &= \frac{-1}{c_1}<0
\end{split}
\end{eqnarray}
Let us denote the coordinates of $A_q$ by $(x_q,y_q)$. Note that in the construction of $M^{\mathcal{F}}_{\delta}$, since we build line segments starting from $(N,M)$ towards the point $A_q$ by going along the attracting direction corresponding to the next uncertainty region in an anticlockwise sense, we get $x_q<N$. This, together with Equation~(\ref{eq:negative_slope_generator}) implies that $C_{l_1,l_4}$ contains a fixed conical neighbourhood of the vertical ray $(N,M)+\{0\}\times\mathbb{R}_{<0}$.
\end{proof}

The next lemma shows that for a sufficiently large $\delta$, the set $\mathcal{M}_{\mathcal{F},\delta}$ contains all the intersection points corresponding to the uncertainty regions of the toric differential inclusion $\mathcal{T}_{\mathcal{F},\delta}$.

\begin{lem}\label{lem:uc_in_invariant}
For large enough $\delta$ we have $S^{\rm uc}\subset M^{\mathcal{F}}_{\delta}$.
\end{lem}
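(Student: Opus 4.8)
The plan is to first shrink $M^{\mathcal{F}}_{\delta}$ to a more tractable comparison region, and then to place every point of $S^{\rm uc}$ inside it. Let $R_\delta$ be the region whose boundary is obtained from $C^{\mathcal{F}}_{\delta}$ by replacing each polygonal line $I_1,I_2,I_3,I_4$ with the straight chord $l_1,l_2,l_3,l_4$ joining its two endpoints, while keeping the curved arcs of $\mathcal{C}_{i_1},\mathcal{C}_{i_2},\mathcal{C}_{i_3},\mathcal{C}_{i_4}$ unchanged. By the concavity of $I_1,I_2,I_3,I_4$ (exactly the property established, via Remark~\ref{rem:monotonic}, in the proof of the lemma that $l_1,l_2,l_3,l_4$ lie inside $M^{\mathcal{F}}_{\delta}$), each $I_j$ lies on the far side of its chord, so $R_\delta\subseteq M^{\mathcal{F}}_{\delta}$, and it is enough to prove $S^{\rm uc}\subseteq R_\delta$ for all sufficiently large $\delta$. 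Note that near the corner $(N,M)$ the boundary of $R_\delta$ consists of the two rays $l_1,l_4$, so locally $R_\delta$ coincides with the cone $C_{l_1,l_4}$, and near $(n,m)$ it coincides with $C_{l_2,l_3}$.

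Now fix $(a,b)\in S^{\rm uc}$ and, by Remark~\ref{rem:para_inter}, write $(a,b)=(e^{a'\delta},e^{b'\delta})$, $(N,M)=(e^{N'\delta},e^{M'\delta})$, $(n,m)=(e^{n'\delta},e^{m'\delta})$ with the primed exponents independent of $\delta$; by Step~2 of the construction $m'\le\max(a',b')\le M'$, and by Remark~\ref{rem:coordinates_points} we may take $M'>0$. If $a\ge n$ and $b\ge m$, then $(a,b)\in(n,m)+\mathbb{R}^2_{\ge 0}\subseteq C_{l_2,l_3}$ by part~(1) of Lemma~\ref{lem:cone_uc}. Otherwise one coordinate of $(a,b)$ drops below that of $(n,m)$, and I would show $(a,b)\in C_{l_1,l_4}$ by evaluating $\lim_{\delta\to\infty}|slope(l^{a,b}_{N,M})|$ via Lemma~\ref{lem:uc_line_segment_slope} (it is $0$ or $\infty$ according as $N$ or $M$ is the maximal coordinate) and comparing it with the limiting slopes of $l_1$ and $l_4$ determined inside the proof of Lemma~\ref{lem:cone_uc}, using Lemma~\ref{lem:knee_coordinate} (which gives $x_u=O(M)$ and thereby fixes the limiting direction of $l_4$). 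In either case $(a,b)$ lies in the unbounded wedge attached to the appropriate corner of $\partial R_\delta$.

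It remains to check that $(a,b)$ is not separated from that corner by one of the curved arcs $\mathcal{C}_{i_1},\mathcal{C}_{i_2},\mathcal{C}_{i_3},\mathcal{C}_{i_4}$. In logarithmic coordinates each such arc lies on the line $q_{i_k}Y=p_{i_k}X\pm\delta_{i_k}$, and each point of $S^{\rm uc}$ is the intersection of two such lines (Step~1). By the definitions of $i_1,\dots,i_4$ through $S_1,S_2,S_3$, the generators $i_1,i_4$ are the negative-slope generators of smallest and largest absolute slope and $i_2,i_3$ are the positive-slope generators of smallest and largest slope, so the four arcs are the extremal uncertainty-region boundaries in their angular sectors. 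Dividing the logarithmic coordinates by $\delta$, I would show that the finite set $\{(a',b')\}$ lies weakly on the inner side of each of the four limiting lines $q_{i_k}Y'=p_{i_k}X'\pm\sqrt{p_{i_k}^2+q_{i_k}^2}$, and that for an arc on which $(a,b)$ itself lies, the comparison of the exponents $a',b'$ with $p_{i_k}/q_{i_k}$ places $(a,b)$ on the sub-arc belonging to $\partial R_\delta$, i.e. between the endpoint of the adjacent polygonal line and the meeting point ($P_{i_1,i_2}$ or $P_{i_3,i_4}$). Combined with the cone membership of the previous step this gives $(a,b)\in R_\delta$; since $(a,b)\in S^{\rm uc}$ was arbitrary, $S^{\rm uc}\subseteq R_\delta\subseteq M^{\mathcal{F}}_{\delta}$.

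The step I expect to be the main obstacle is the last one: the corner cones only certify membership in an unbounded wedge at $(N,M)$ or $(n,m)$, and one must still rule out, uniformly as $\delta\to\infty$, that some intersection point slips past one of the curved caps. Making this precise amounts to showing that the sign of $q_{i_k}b'-p_{i_k}a'\mp\sqrt{p_{i_k}^2+q_{i_k}^2}$ — rather than its $\delta$-scaled magnitude — is forced by the extremal choice of $i_1,\dots,i_4$ for every intersection point of two uncertainty-region boundaries; a careful bookkeeping of which pairs of boundaries can produce an intersection point, organized by the slope classes $S_1,S_2,S_3$, should close the argument.
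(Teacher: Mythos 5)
Your overall strategy matches the paper's: pass from $M^{\mathcal{F}}_{\delta}$ to the region bounded by the chords $l_1,\dots,l_4$ and the arcs, certify that the points of $S^{\rm uc}$ lie in the wedges $C_{l_1,l_4}$ and $C_{l_2,l_3}$ via Lemmas~\ref{lem:uc_line_segment_slope} and~\ref{lem:cone_uc}, and treat the curved caps separately. However, there is a structural gap in your second paragraph: the case split certifies membership in only \emph{one} of the two wedges per point. When $a\ge n$ and $b\ge m$ you place $(a,b)$ in $C_{l_2,l_3}$ and say nothing about $C_{l_1,l_4}$; in the complementary case you do the reverse. Membership in $R_\delta$ requires lying on the correct side of all four chords, i.e.\ lying in \emph{both} wedges simultaneously, and neither conclusion implies the other (a point of $(n,m)+\mathbb{R}^2_{\ge 0}$ is not automatically on the correct side of $l_1$ and $l_4$). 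The paper runs both arguments for every point of $S^{\rm uc}$: part~(1) of Lemma~\ref{lem:uc_line_segment_slope} together with part~(2) of Lemma~\ref{lem:cone_uc} puts all of $S^{\rm uc}$ into $C_{l_1,l_4}$, and part~(2) of Lemma~\ref{lem:uc_line_segment_slope} together with part~(1) of Lemma~\ref{lem:cone_uc} puts all of $S^{\rm uc}$ into $C_{l_2,l_3}$. Your two cases need to be replaced by this ``both cones for every point'' argument.

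The second issue is the step you yourself flag as the main obstacle: ruling out that an intersection point slips past one of the arcs $\mathcal{C}_{i_1},\dots,\mathcal{C}_{i_4}$. You leave this as a plan (a sign analysis of $q_{i_k}b'-p_{i_k}a'\mp\sqrt{p_{i_k}^2+q_{i_k}^2}$ organized by the slope classes), so as written the proof is incomplete precisely where it is hardest. The paper disposes of this much more directly: since the construction of the polygonal lines terminates exactly at the extremal uncertainty regions $i_1,\dots,i_4$, the cap regions --- $R_1$, enclosed by $\mathcal{C}_{i_1}$, $\mathcal{C}_{i_2}$ and the extensions of $l_1,l_2$ to their meeting point, and $R_2$ defined analogously for $l_3,l_4$ --- contain no element of $S^{\rm uc}$ at all, so combined with membership in both wedges one gets $S^{\rm uc}\subset M^{\mathcal{F}}_{\delta}$ without any per-arc sign bookkeeping. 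If you prefer your route, the bookkeeping must actually be carried out; otherwise adopt the paper's observation to close the argument.
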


\begin{proof}
Refer to Figure~\ref{fig:red_cone} for this proof. Extend line segments $l_1$ and $l_2$ so that they meet at the point $P_{12}$. Let $R_1$ be the region enclosed between the curves $C_{i_1},C_{i_2}$ and the line segments joining $A_0$ to $P_{12}$ and $C_0$ to $P_{12}$. Similarly, extend line segments $l_3$ and $l_4$ so that they meet at the point $P_{34}$. Let $R_2$ be the region formed between the curves $C_{i_3},C_{i_4}$ and the line segments joining $A_0$ to $P_{34}$ and $C_0$ to $P_{34}$. Note that in the construction of $M^{\mathcal{F}}_{\delta}$, we stop building line segments at the uncertainty regions with indices $i_1,i_2,i_3,i_4$. Therefore, there is no element of $S^{\rm uc}$ in regions $R_1$ and $R_2$. By Lemma~\ref{lem:cone_uc}, we have that for $\delta$ large enough, the cone $C_{l_2,l_3}$ contains the set $(n,m)+\mathbb{R}^2_{>0}$ and the cone $C_{l_1,l_4}$ either contains a fixed conical neighbourhood of the horizontal ray $(N,M)+\mathbb{R}_{<0}\times\{0\}$ or contains a fixed conical neighbourhood of the vertical ray $(N,M)+\{0\}\times\mathbb{R}_{<0}$ or contains both. By Lemma~\ref{lem:uc_line_segment_slope}, for $\delta$ large enough, all the line segments connecting $(N,M)$ to points in $S^{\rm uc}$ have either zero slope or all of them have infinite slope. This implies that all points in $S^{\rm uc}$ are contained in $C_{l_1,l_4}$. In addition, every line segment connecting $(n,m)$ to points in $S^{\rm uc}$ has slope either zero, one or infinite, which implies that all points in $S^{\rm uc}$ are contained in $C_{l_2,l_3}$. Together, these arguments give us that $S^{\rm uc}\in M^{\mathcal{F}}_{\delta}$ for $\delta$ large enough.

\end{proof}

\begin{lem}\label{lem:one_uncertainty}
For large enough $\delta$, we have $r(\vx)\leq 1\ \text{for all}\ \vx\in C_{\delta}^{\mathcal{F}}$.
\end{lem}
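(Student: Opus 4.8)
The plan is to recast the statement in terms of overlaps of uncertainty regions and then exploit the fact that, in logarithmic coordinates rescaled by $\delta$, both the boundary $C^{\mathcal{F}}_{\delta}$ and these overlaps stabilize as $\delta\to\infty$. By Definition~\ref{def:function_uncertainty_region}, $r(\vx)\ge 2$ forces $\vx$ to lie in the intersection of the interiors of two distinct uncertainty regions $UC_k$ and $UC_{k'}$, which is contained in $\mathrm{int}(UC_k\cap UC_{k'})$. So it suffices to show that for $\delta$ large, $C^{\mathcal{F}}_{\delta}$ is disjoint from the interior of every pairwise intersection $UC_k\cap UC_{k'}$.

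First I would record the geometry of a single overlap. In the coordinates $(X,Y)=(\log x,\log y)$, the uncertainty region $UC_i$ is the closed strip of width $2\delta$ bounded by the two lines $q_iY=p_iX\pm\delta_i$ from Equation~(\ref{eq:one_d_line}); since every line generator passes through the origin, this strip is symmetric about the origin. Hence, for distinct directions, $UC_k\cap UC_{k'}$ is a bounded parallelogram centered at the origin whose four vertices are exactly the four points of $S^{\rm uc}$ listed in Equation~(\ref{eq:expo_intersection_points}) for the pair $(k,k')$. By Remark~\ref{rem:para_inter} each such vertex has the form $(e^{C\delta},e^{D\delta})$ with $C,D$ independent of $\delta$; so after dividing logarithmic coordinates by $\delta$ the image of $UC_k\cap UC_{k'}$ is a fixed parallelogram $\Pi_{k,k'}$, and in particular $\{\vx:r(\vx)\ge 2\}$ becomes, in rescaled logarithmic coordinates, a fixed bounded set contained in a ball of some fixed radius $\rho_0$ about the origin.

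Next I would treat the two kinds of pieces of $C^{\mathcal{F}}_{\delta}$ separately. For a curved piece, say a point $\vx$ on $\mathcal{C}_{i_1}$: by construction $\log\vx$ lies on a bounding line of $UC_{i_1}$, so $\vx\notin\mathrm{int}(UC_{i_1})$; and the arc of $\mathcal{C}_{i_1}$ used in Step~5, from the terminal point $A_q$ of $I_1$ to the meeting point $P_{i_1,i_2}$, runs along the outer bounding curve of the extremal region in the class $S_1$, so by the monotonicity of its tangent slopes (Equation~(\ref{eq:slope_i_1}), already exploited in Lemma~\ref{lem:curved_part}) this arc moves monotonically away from every other uncertainty region once it has passed the relevant intersection points of $S^{\rm uc}$, hence never re-enters $\mathrm{int}(UC_k)$ for $k\ne i_1$; thus $r(\vx)\le 1$ there, and $\mathcal{C}_{i_2},\mathcal{C}_{i_3},\mathcal{C}_{i_4}$ are handled identically via Equation~(\ref{eq:slope_i_2}) and its analogues. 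For a polygonal piece, a line segment of $C^{\mathcal{F}}_{\delta}$ crosses exactly one uncertainty region $UC_t$ in its attracting direction (perpendicular to $q_tY=p_tX$) between two breakpoints that lie on the bounding curves of $UC_t$, hence in $S^{\rm uc}$ and of the form $(e^{C\delta},e^{D\delta})$; in rescaled logarithmic coordinates the segment therefore sits at a fixed position, crossing the rescaled strip of $UC_t$ (which has width $2$) at a fixed nonzero signed height $s_0$ measured along the generator $q_tY=p_tX$. If some $\vx$ on this segment had $r(\vx)\ge 2$ it would lie in $UC_t\cap UC_{t'}$ for some $t'\ne t$; since the rescaled strips of $UC_t$ and $UC_{t'}$ are transverse, a short computation shows this can happen only when $|s_0|$ is smaller than a fixed constant $\kappa_{t,t'}$ depending only on the angle between the two generators. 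I would rule this out using the ordering of uncertainty regions fixed before Remark~\ref{rem:monotonic}: along $C^{\mathcal{F}}_{\delta}$ the segment crossing $UC_t$ is adjacent only to the segments crossing the neighbouring regions, and the slope monotonicity of Remark~\ref{rem:monotonic}, together with the locations of the starting points $(N,M)$ and $(n,m)$ controlled by Lemmas~\ref{lem:knee_coordinate}, \ref{lem:cone_uc} and~\ref{lem:uc_line_segment_slope} and the fact that all corners of every overlap parallelogram lie inside $M^{\mathcal{F}}_{\delta}$ (Lemma~\ref{lem:uc_in_invariant}), forces $|s_0|\ge\kappa_{t,t'}$ for every $t'\ne t$. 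This contradiction finishes the polygonal case and hence the lemma for $\delta$ large.

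The main obstacle is precisely this last estimate: showing that each line segment of the boundary, while legitimately crossing its "own" uncertainty region, never dips into a neighbouring one. Since the breakpoints are the corners of the overlap parallelograms and, by Lemma~\ref{lem:uc_in_invariant}, those corners lie strictly inside $M^{\mathcal{F}}_{\delta}$, the real content is that the boundary does not "cut a corner" off any overlap; this is where the extremality of $i_1,i_2,i_3,i_4$ and the slope ordering of Remark~\ref{rem:monotonic} do the work, and the bookkeeping of which regions are adjacent along the path — and of which of the two starting points $(N,M)$, $(n,m)$ a given segment descends from — is the delicate part of the argument.
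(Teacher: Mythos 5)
Your overall strategy matches the paper's in its main ingredients: reduce $r(\vx)\ge 2$ to membership in a pairwise overlap $UC_t\cap UC_{t'}$, observe that this overlap is a parallelogram (in logarithmic coordinates) whose corners are points of $S^{\rm uc}$, and aim for a contradiction with Lemma~\ref{lem:uc_in_invariant}. But the step that actually produces the contradiction is missing. You reduce everything to the claim that the signed height $|s_0|$ of each boundary segment along its generator exceeds a constant $\kappa_{t,t'}$ for every other region $UC_{t'}$, assert that this is ``forced'' by Remark~\ref{rem:monotonic} and Lemmas~\ref{lem:knee_coordinate}, \ref{lem:cone_uc}, \ref{lem:uc_line_segment_slope}, \ref{lem:uc_in_invariant}, and then concede in your closing paragraph that this estimate is ``the main obstacle'' and ``the delicate part of the argument.'' That estimate \emph{is} the content of the lemma, so the proof is not closed.

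The paper closes it with a short separation argument that needs no quantitative estimate: each line segment of $C^{\mathcal{F}}_{\delta}$ crosses its own uncertainty region $UC_t$ from one bounding curve to the other. If such a segment met the interior of $D=UC_t\cap UC_{t'}$, it would traverse $D$, entering and leaving through the two sides of $D$ lying on $\partial UC_t$, and would therefore separate the two opposite sides of $D$ lying on $\partial UC_{t'}$ --- each of which carries corners of $D$, i.e.\ points of $S^{\rm uc}$. Since $C^{\mathcal{F}}_{\delta}$ is the boundary of $M^{\mathcal{F}}_{\delta}$, one of the two resulting pieces of $D$ lies outside $M^{\mathcal{F}}_{\delta}$, hence some point of $S^{\rm uc}$ lies outside $M^{\mathcal{F}}_{\delta}$, contradicting Lemma~\ref{lem:uc_in_invariant}. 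Separately, your sentence ``the breakpoints are the corners of the overlap parallelograms'' is false --- the breakpoints $A_1,\dots,A_q$, $B_1,\dots,B_u$, etc.\ each lie on a single bounding curve and are in general not in $S^{\rm uc}$; only $(N,M)$ and $(n,m)$ are --- and it is self-contradictory as written, since breakpoints lie on $C^{\mathcal{F}}_{\delta}$ and so cannot lie strictly inside $M^{\mathcal{F}}_{\delta}$.
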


\begin{proof} 
For contradiction, assume that there exists a $\vx_0\in C_{\delta}^{\mathcal{F}}$ such that $r(\vx_0)\geq 2$. Since $C_{\delta}^{\mathcal{F}}$ is a continuous curve, there exists a region $D$ that is bounded by at least two different uncertainty regions such that it contains the point $\vx_0$ in its interior. Further, steps $3$ and $4$ in the construction of $\mathcal{M}_{\mathcal{F},\delta}$ ensure that $C_{\delta}^{\mathcal{F}}$ intersects at least one uncertainty region containing $\vx_0$ at both its boundary curves. This line segment of $C_{\delta}^{\mathcal{F}}$ containing $\vx_0$ divides $D$ into two regions. Therefore, there is at least element of $S^{\rm uc}$ that is not contained in $\mathcal{M}_{\mathcal{F},\delta}$, contradicting Lemma~\ref{lem:uc_in_invariant} which says that $S^{\rm uc}\subseteq \mathcal{M}_{\mathcal{F},\delta}$ for a large enough $\delta$.
\end{proof}

\begin{prop}\label{lem:invariant}
For large enough $\delta$, $M^{\mathcal{F}}_{\delta}$ is an invariant region for the toric differential inclusion $\mathcal{T}_{\mathcal{F},\delta}$.
\end{prop}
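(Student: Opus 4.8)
The plan is to verify the standard boundary (sub-tangency) criterion for invariance: if at every point $\vx$ of the boundary curve $C^{\mathcal F}_\delta=\partial M^{\mathcal F}_\delta$ the cone $F_{\mathcal F,\delta}(\log\vx)$ lies in the tangent cone of $M^{\mathcal F}_\delta$ at $\vx$, then no solution starting in $M^{\mathcal F}_\delta$ can leave it, since at a first exit time the velocity would have to point strictly out of $M^{\mathcal F}_\delta$, contradicting $\frac{d\vx}{dt}\in F_{\mathcal F,\delta}(\log\vx)$. Because $C^{\mathcal F}_\delta$ is a piecewise-smooth Jordan curve, I would split the check according to which piece $\vx$ belongs to: (a) the relative interior of a straight segment of one of the polygonal arcs $I_1,I_2,I_3,I_4$; (b) a curved arc $\mathcal C_{i_1},\mathcal C_{i_2},\mathcal C_{i_3},\mathcal C_{i_4}$; (c) a junction point between two consecutive straight segments (the points $A_q,B_u,C_r,D_s$ where the straight and curved parts meet are handled the same way); or (d) one of the distinguished corner points $(N,M)$ and $(n,m)$. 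Throughout, $\delta$ is taken large enough that Lemmas~\ref{lem:knee_coordinate}, \ref{lem:curved_part}, \ref{lem:cone_uc}, \ref{lem:uc_in_invariant} and \ref{lem:one_uncertainty} all apply simultaneously.

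Case (a) is the generic situation and should be essentially immediate. Suppose $\vx$ lies in the relative interior of a segment of (say) $I_4$ built while crossing the uncertainty region $UC_k$ of the line generator $q_kY=p_kX$, so that this segment is parallel to an attracting direction of $UC_k$, i.e.\ to a line orthogonal to $q_kY=p_kX$. By Lemma~\ref{lem:one_uncertainty} we have $r(\vx)\le 1$. If $r(\vx)=1$, then by Remark~\ref{rem:tdi}(i) the cone $F_{\mathcal F,\delta}(\log\vx)$ is a half-plane bounded by a line orthogonal to $q_kY=p_kX$, hence parallel to the segment; and since the construction of $I_4$ follows the attracting direction pointing \emph{into} the region being enclosed, this half-plane is exactly the tangent half-plane $T_{M^{\mathcal F}_\delta}(\vx)$. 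If $r(\vx)=0$, then by Remark~\ref{rem:tdi}(iii) the cone $F_{\mathcal F,\delta}(\log\vx)$ is a proper subcone of that same half-plane (its intersection with the half-plane coming from the neighbouring uncertainty region), so it is contained in $T_{M^{\mathcal F}_\delta}(\vx)$ a fortiori. The arcs $I_1,I_2,I_3$ are treated identically.

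For case (b), Lemma~\ref{lem:curved_part} is exactly what is needed: at a point on $\mathcal C_{i_1},\dots,\mathcal C_{i_4}$ the cone $F_{\mathcal F,\delta}(\log\vx)$ is contained in the cone spanned by the tangents to the two meeting curves, and because the slopes of those tangents tend to $\pm\infty$ as $\delta\to\infty$ that cone — which is contained in $T_{M^{\mathcal F}_\delta}(\vx)$ — opens widely enough to contain $F_{\mathcal F,\delta}(\log\vx)$. For case (c), the tangent cone $T_{M^{\mathcal F}_\delta}(B_j)$ at a junction of two straight segments is the wedge they span; the monotone ordering of slopes in Remark~\ref{rem:monotonic}, together with the concavity of the polygonal arcs established earlier, guarantees that this wedge opens toward $M^{\mathcal F}_\delta$ and contains the half-plane (or proper cone) $F_{\mathcal F,\delta}(\log B_j)$, which by case (a) is bounded by a line parallel to one of the two incident segments. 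For case (d), at $(n,m)$ Lemma~\ref{lem:cone_uc} gives $(n,m)+\mathbb R^2_{>0}\subseteq C_{l_2,l_3}\subseteq T_{M^{\mathcal F}_\delta}((n,m))$ and one checks from Remark~\ref{rem:tdi} that $F_{\mathcal F,\delta}(\log(n,m))$ lands in this region; at $(N,M)$, Lemma~\ref{lem:cone_uc} supplies a fixed conical neighbourhood of a horizontal or vertical ray inside $C_{l_1,l_4}\subseteq T_{M^{\mathcal F}_\delta}((N,M))$, and the slope estimate $-1/c_1$ from its proof together with $M>1$, $y_u<1$ (Remark~\ref{rem:coordinates_points}) shows that $F_{\mathcal F,\delta}(\log(N,M))$ sits inside it once $\delta$ is large. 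Combining (a)--(d), the sub-tangency condition holds everywhere on $C^{\mathcal F}_\delta$, so $M^{\mathcal F}_\delta$ is invariant.

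The main obstacle I anticipate is case (d), and to a lesser extent the corner bookkeeping in case (c): one must pin down the precise orientation and opening of the cones $C_{l_1,l_4}$, $C_{l_2,l_3}$ and of the wedges at the $B_j$'s, and verify they open on the $M^{\mathcal F}_\delta$-side and are wide enough, which requires juggling the sign conventions of the attracting directions, the monotonicity of Remark~\ref{rem:monotonic}, and the $\delta\to\infty$ asymptotics at once. By contrast, once one observes that on a straight segment the admissible-velocity half-plane and the boundary segment are parallel, case (a) carries essentially no content, and case (b) is already packaged in Lemma~\ref{lem:curved_part}.
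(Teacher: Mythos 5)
Your proposal is correct and follows essentially the same route as the paper: verify the Nagumo sub-tangency condition along $C^{\mathcal{F}}_{\delta}$, using Lemma~\ref{lem:one_uncertainty} to reduce to $r(\vx)\leq 1$, Remark~\ref{rem:tdi} to identify $F_{\mathcal{F},\delta}$ as a half-plane parallel to the segment (or a subcone thereof) on the straight pieces, and Lemma~\ref{lem:curved_part} on the arcs $\mathcal{C}_{i_1},\dots,\mathcal{C}_{i_4}$. Your explicit treatment of the junction points and of the corners $(N,M)$, $(n,m)$ via Remark~\ref{rem:monotonic} and Lemma~\ref{lem:cone_uc} is a welcome refinement of details the paper's proof leaves implicit, but it is not a different argument.
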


\begin{proof}
Consider a solution $\vx(t)$ of $\mathcal{T}_{\mathcal{F},\delta}$. We will show that if $\vx(t_0)\in C^{\mathcal{F}}_{\delta}$,  then $\dot{\vx}(t_0)$ points towards the interior of $M^{\mathcal{F}}_{\delta}$~\cite{nagumo1942lage,blanchini1999set}. More precisely, we will show that $\dot{\vx}(t_0)\cdot \vn\leq 0$, where $\vn$ is the outer normal to the curve $C^{\mathcal{F}}_{\delta}$. Note that by Lemma~\ref{lem:one_uncertainty}, we have that $r(\vx)\leq 1$ for all $\vx\in C_{\delta}^{\mathcal{F}}$. If $\vx\in \mathcal{C}_{i_1}\cup \mathcal{C}_{i_2}\cup \mathcal{C}_{i_3}\cup \mathcal{C}_{i_4}$, then Lemma~\ref{lem:curved_part} shows that the cone corresponding to the right-hand side of $\mathcal{T}_{\mathcal{F},\delta}$ points towards the interior of the respective uncertainty regions. This implies that $\dot{\vx}(t_0)\cdot \vn\leq 0$. Else $\vx$ lies on the line segments of $C^{\mathcal{F}}_{\delta}$. We proceed by case analysis:\\
Case 1: $r(\vx(t_0))= 1$. By Remark~\ref{rem:tdi}.(i), the right-hand side of $\mathcal{T}_{\mathcal{F},\delta}$ is a half-space that points towards the interior of $M^{\mathcal{F}}_{\delta}$. Note that the slope of the generators of the half-space is equal to the slope of the line segment of $C^{\mathcal{F}}_{\delta}$ that contains the point $\vx(t_0)$. Therefore, we have $\dot{\vx}(t_0)\cdot \vn\leq 0$.\\
Case 2: $r(\vx(t_0))= 0$. By Remark~\ref{rem:tdi}.(iii), the right-hand side of $\mathcal{T}_{\mathcal{F},\delta}$ is a proper cone formed by the intersection of two half-spaces which are the right-hand sides of the toric differential inclusion corrresponding to  the nearest two uncertainty regions. Note that the half-space generated by the line segment of $C^{\mathcal{F}}_{\delta}$ that points towards $M^{\mathcal{F}}_{\delta}$ is one of these two half-spaces. By the analysis in Case 1, we get that the cone corresponding to the right-hand side of $\mathcal{T}_{\mathcal{F},\delta}$ is contained in both these half-spaces. In particular, it is contained in the half-space generated by the line segment of $C^{\mathcal{F}}_{\delta}$ that points towards $M^{\mathcal{F}}_{\delta}$. This implies that $\dot{\vx}(t_0)\cdot \vn\leq 0$.

\end{proof}

\begin{lem}\label{lem:(1,1)_global_attractor}
For any $\delta>0$ and $\vx_0\in\mathbb{R}^2_{>0}$, there exists a trajectory of $\mathcal{T}_{\mathcal{F},\delta}$ from $\vx_0$ to $(1,1)$.
\end{lem}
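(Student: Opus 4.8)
\emph{Proof sketch.} I would work in logarithmic coordinates, writing $\vX_0=\log\vx_0$, and use that $\vx(t)$ is a solution of $\mathcal{T}_{\mathcal{F},\delta}$ exactly when $\dot\vx(t)\in F_{\mathcal{F},\delta}(\log\vx(t))$; thus it suffices to construct a solution with $\vx(0)=\vx_0$ and $\vx(t)\to(1,1)$ as $t\to\infty$, which in particular produces a trajectory in the sense of Definition~\ref{def:trajectory}. The engine of the argument is the Lyapunov function $g(\vx)=\sum_i(x_i\log x_i-x_i+1)$: it satisfies $g\ge 0$, $g(\vx)=0$ iff $\vx=(1,1)$, and $\nabla g(\vx)=\log\vx=\vX$, so that $\tfrac{d}{dt}g(\vx(t))=\vX(t)\cdot\dot\vx(t)$.

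The key geometric claim I would establish is: there is a constant $c_0>0$, depending only on $\mathcal{F}$ and $\delta$, such that for every $\vX\neq\bv{0}$ the cone $F_{\mathcal{F},\delta}(\vX)$ contains a unit vector $\vv$ with $\vv\cdot\vX\le -c_0\|\vX\|$. For the pointwise part: by Equation~(\ref{eq:simple_tdi}), $F_{\mathcal{F},\delta}(\vX)=K(\vX)^o$ where $K(\vX)$ is an intersection of full-dimensional cones of $\mathcal{F}$; since $\mathcal{F}$ is complete, $\vX$ lies in some full-dimensional cone $C^{+}\in\mathcal{F}$, which is therefore one of the cones intersected, so $K(\vX)\subseteq C^{+}$ and $F_{\mathcal{F},\delta}(\vX)\supseteq (C^{+})^o$. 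As $C^{+}$ is pointed and contains $\vX\neq\bv{0}$, it cannot contain $-\vX$; by the bipolar theorem this is equivalent to the existence of $\vv\in (C^{+})^o$ with $\vv\cdot\vX<0$, and we may normalize $\vv$. Uniformity then follows because $\vX/\|\vX\|\mapsto\max_{\vv\in (C^{+})^o,\,\|\vv\|=1}(-\vv\cdot\vX/\|\vX\|)$ is continuous and strictly positive on the compact set $\overline{C^{+}}\cap S^1$, and there are only finitely many full-dimensional cones; the same bound can equally be read off region by region from the explicit description in Remark~\ref{rem:tdi}.

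Given the claim, I would let $\vv(\vX)$ be a (measurable, by standard arguments) selection of such unit vectors, take $\vx(t)$ to be a solution of $\dot\vx=\vv(\log\vx)$ with $\vx(0)=\vx_0$, and observe $\tfrac{d}{dt}g(\vx(t))=\vv(\vX(t))\cdot\vX(t)\le -c_0\|\vX(t)\|\le 0$. By Remark~\ref{rem:(1,1)_uncert} there is an open ball $U$ about $(1,1)$ on which $F_{\mathcal{F},\delta}(\log\vx)=\mathbb{R}^2$, and by continuity of $\exp$ there is $\beta>0$ with $\|\log\vx\|\ge\beta$ for $\vx\notin U$; hence $g$ decreases at rate at least $c_0\beta$ while $\vx(t)\notin U$, and since $g\ge 0$ the solution enters $U$ after finite time. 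A short separate check shows the solution cannot first leave $\mathbb{R}^2_{>0}$ through its boundary: near $\partial\mathbb{R}^2_{>0}$ some coordinate of $\vX$ is large in absolute value, and the inequality $\vv(\vX)\cdot\vX\le -c_0\|\vX\|$ then forces the corresponding component of $\vv$ to point inward. Once inside $U$, I would switch the dynamics to $\dot\vx=(1,1)-\vx$, which is admissible since $(1,1)-\vx\in\mathbb{R}^2=F_{\mathcal{F},\delta}(\log\vx)$ there and keeps $\vx$ in $U$; then $\vx(t)\to(1,1)$. Concatenating the two pieces gives the desired trajectory.

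I expect the main obstacle to be the uniform inward-direction claim — that the polar-cone right-hand side always contains a direction strictly decreasing $g$, with a rate bounded below uniformly in $\vX$ — which needs the casework/geometry behind Remark~\ref{rem:tdi} together with the compactness argument on $S^1$, plus the minor but necessary verifications that the constructed solution is defined on a long enough interval and stays inside $\mathbb{R}^2_{>0}$ until it reaches $U$.
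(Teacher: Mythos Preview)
Your argument is correct and follows a genuinely different route from the paper. The paper's proof is essentially a citation: it invokes \cite[Proposition~3.2]{craciun2019polynomial} to embed a reversible mass-action system (with all rate constants equal to $1$) into $\mathcal{T}_{\mathcal{F},\delta}$, observes that $(1,1)$ is then a complex-balanced equilibrium, and appeals to the known two-dimensional global attractor result \cite{craciun2013persistence} to conclude that every solution of that embedded system converges to $(1,1)$. You instead build the trajectory by hand from the cone geometry: the inclusion $(C^{+})^o\subseteq F_{\mathcal{F},\delta}(\vX)$ for the full-dimensional $C^{+}\ni\vX$, together with pointedness of $C^{+}$, yields a uniformly inward unit selection that drives the entropy $g(\vx)=\sum_i(x_i\log x_i-x_i+1)$ down into the central region where $F=\mathbb{R}^2$, after which a linear contraction finishes the job. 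The paper's route is much shorter but imports two external results; yours is self-contained and makes transparent \emph{why} $F_{\mathcal{F},\delta}(\vX)$ always contains a direction decreasing $g$ --- your Lyapunov function is precisely the one underlying the cited global attractor theorem, so you are in effect re-deriving the relevant special case directly at the level of the differential inclusion. The technical obligations you flag (measurable selection, Carath\'eodory existence, the concatenation at the switching time) are standard; the positivity check near $\partial\mathbb{R}^2_{>0}$ is genuinely needed since the sublevel sets of $g$ are not compact in $\mathbb{R}^2_{>0}$, but your sign analysis of $\vv(\vX)\cdot\vX\le -c_0\|\vX\|$ handles it. Note only that the pointedness of $C^{+}$ requires at least two line generators in $\mathcal{F}$, which is ensured here by Assumption~\ref{assump:positive_negative_generators}.
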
	

\begin{proof}
Using Proposition~\cite[3.2]{craciun2019polynomial}, there exists a variable-$k$ reversible dynamical system generated by $\mathcal{G}=(V,E)$ with 
\begin{eqnarray}
\epsilon=exp\left({\displaystyle\min_{\vs\rightleftharpoons\vs'\in E}||\vs'-\vs||\frac{\delta}{2}}\right)
\end{eqnarray}
such that it can be embedded into the toric differential inclusion $\mathcal{T}_{\mathcal{F},\delta}$. Setting the rate constants $k_i=1$ for all the reactions in $\mathcal{G}$, we get that $(1,1)$ is a point of complex balanced equilibrium. Since this dynamical system is two-dimensional, the point $(1,1)$ is a global attractor~\cite{craciun2013persistence}. In particular, for any initial condition $\vx(0)\in\mathbb{R}^2_{>0}$, a solution $\vx(t)$ of this reversible dynamical system satisfies $\displaystyle\lim_{t\to\infty}\vx(t)=(1,1)$. This implies that for any $\vx_0\in\mathbb{R}^2_{>0}$, there exists a trajectory of  $\mathcal{T}_{\mathcal{F},\delta}$ from $\vx_0$ to $(1,1)$.
\end{proof}

\begin{cor}\label{cor:(1,1)_min_inv}
The point $(1,1)\in\Omega^{\rm{min,inv}}_{\mathcal{T}_{\mathcal{F},\delta}}$.
\end{cor}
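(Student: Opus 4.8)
\textbf{Proof proposal for Corollary~\ref{cor:(1,1)_min_inv}.}

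The plan is to feed Lemma~\ref{lem:(1,1)_global_attractor} into the invariance property, using that the right-hand side of $\mathcal{T}_{\mathcal{F},\delta}$ is all of $\mathbb{R}^2$ near $(1,1)$ in order to turn the abstract trajectory into a genuine solution that actually reaches $(1,1)$ in finite time. Concretely: since $\Omega^{\mathrm{min},\mathrm{inv}}_{\mathcal{T}_{\mathcal{F},\delta}}$ is an invariant region (the minimal one), it is nonempty; pick $\vx_0\in\Omega^{\mathrm{min},\mathrm{inv}}_{\mathcal{T}_{\mathcal{F},\delta}}$. By Lemma~\ref{lem:(1,1)_global_attractor} there is a solution $\vx(t)$ of $\mathcal{T}_{\mathcal{F},\delta}$ with $\vx(0)=\vx_0$ that comes arbitrarily close to $(1,1)$. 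As $\Omega^{\mathrm{min},\mathrm{inv}}_{\mathcal{T}_{\mathcal{F},\delta}}$ is invariant, $\vx(t)$ stays in it for all $t>0$, so at least $(1,1)\in\overline{\Omega^{\mathrm{min},\mathrm{inv}}_{\mathcal{T}_{\mathcal{F},\delta}}}$.

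The one subtlety is precisely this asymptotic-versus-exact gap, and closing it is where the work is. By Assumption~\ref{assump:positive_negative_generators} the fan has $b\geq 3$ line generators, and by Remark~\ref{rem:(1,1)_uncert} the point $(1,1)$ lies in the interior of every one of the $b$ uncertainty regions; hence there is an open ball $B$ around $(1,1)$ with $B\subseteq\mathbb{R}^2_{>0}$ on which $r\geq 2$, so by Remark~\ref{rem:tdi}(ii), $F_{\mathcal{F},\delta}(\log\vx)=\mathbb{R}^2$ for all $\vx\in B$. Choose a time $t_0$ with $\vx(t_0)\in B$ and define a new path by following $\vx$ on $[0,t_0]$ and then the constant-speed straight segment from $\vx(t_0)$ to $(1,1)$ on $[t_0,t_0+1]$. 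This path is absolutely continuous; it satisfies the inclusion on $[0,t_0]$ because $\vx$ does, and on $[t_0,t_0+1]$ because the segment stays inside the convex set $B$, where the right-hand side is $\mathbb{R}^2$ and so contains the (constant) velocity. Thus it is a solution of $\mathcal{T}_{\mathcal{F},\delta}$ starting at $\vx_0$ with value $(1,1)$ at the finite time $T=t_0+1$.

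Since $\Omega^{\mathrm{min},\mathrm{inv}}_{\mathcal{T}_{\mathcal{F},\delta}}$ is invariant and contains $\vx_0=\vx(0)$, this solution stays in $\Omega^{\mathrm{min},\mathrm{inv}}_{\mathcal{T}_{\mathcal{F},\delta}}$ for all $t>0$, and in particular $(1,1)=\vx(T)\in\Omega^{\mathrm{min},\mathrm{inv}}_{\mathcal{T}_{\mathcal{F},\delta}}$, as claimed; the same argument applied to an arbitrary invariant region $\Omega$ (and any $\vx_0\in\Omega$) shows $(1,1)$ belongs to every invariant region. I expect the only real obstacle to be the finite-time completion of the trajectory just described, together with the routine check that the spliced path is an admissible solution (absolutely continuous, inclusion holding a.e.); everything else is a direct combination of Lemma~\ref{lem:(1,1)_global_attractor}, Remark~\ref{rem:(1,1)_uncert}, and Remark~\ref{rem:tdi}. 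If one adopts the convention that invariant regions are closed, the completion step can be omitted entirely, since then $(1,1)\in\overline{\Omega^{\mathrm{min},\mathrm{inv}}_{\mathcal{T}_{\mathcal{F},\delta}}}=\Omega^{\mathrm{min},\mathrm{inv}}_{\mathcal{T}_{\mathcal{F},\delta}}$.
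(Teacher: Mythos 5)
Your proof is correct and follows the same route as the paper, whose entire proof of this corollary is the single line ``This follows from Lemma~\ref{lem:(1,1)_global_attractor}.'' The gap you identify is real: a \emph{trajectory} from $\vx_0$ to $(1,1)$ in the sense of Definition~\ref{def:trajectory} only approaches $(1,1)$ arbitrarily closely, while membership in an invariant region requires the solution to actually attain the point, so the one-line citation by itself only yields $(1,1)\in\overline{\Omega^{\rm{min,inv}}_{\mathcal{T}_{\mathcal{F},\delta}}}$. Your splicing argument --- using Remark~\ref{rem:(1,1)_uncert} and Remark~\ref{rem:tdi}(ii) to get a ball around $(1,1)$ on which $F_{\mathcal{F},\delta}=\mathbb{R}^2$, and then appending a straight segment that reaches $(1,1)$ in finite time --- is exactly the mechanism the paper itself deploys implicitly in Lemma~\ref{lem:uncert_in_invariant} and Lemma~\ref{lem:travel_everywhere}, so your write-up supplies a detail the paper omits rather than taking a different path. (Both you and the paper tacitly assume invariant regions are nonempty, since the empty set is vacuously invariant under Definition~\ref{def:invariant_regions}; this is a shared convention rather than a flaw in your argument.)
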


\begin{proof}
This follows from Lemma~\ref{lem:(1,1)_global_attractor}.
\end{proof}

\begin{lem}\label{lem:(M,N)_(m,n)_global_attractor}
For any $\delta>0$ and $\vx_0\in\mathbb{R}^2_{>0}$, there exists a trajectory of $\mathcal{T}_{\mathcal{F},\delta}$ from $\vx_0$ to $(N,M)$.
\end{lem}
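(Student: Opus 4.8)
The plan is to build the trajectory to $(N,M)$ by concatenating two pieces: first route $\vx_0$ into an arbitrarily small neighbourhood of $(1,1)$ via Lemma~\ref{lem:(1,1)_global_attractor}, then leave that neighbourhood and travel to $(N,M)$ along the exponential image of a straight segment in logarithmic coordinates, chosen so that $F_{\mathcal{F},\delta}=\mathbb{R}^2$ at every point of the segment other than its endpoint — which makes any smooth curve tracing the segment an admissible solution of $\mathcal{T}_{\mathcal{F},\delta}$.

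\emph{The geometry.} Since $(N,M)\in S^{\rm uc}$, the point $\vX^{*}:=\log(N,M)$ lies simultaneously on the boundaries of the logarithmic slabs $\Sigma_i=\{(X,Y):|q_iY-p_iX|\le\delta_i\}$ and $\Sigma_j$ of two distinct uncertainty regions. Each $\Sigma_k$ is convex and, by Remark~\ref{rem:(1,1)_uncert}, contains the origin $(0,0)=\log(1,1)$ in its interior. By the elementary fact that the half-open segment from an interior point of a convex set to any point of the set stays in the interior, for every $\vX_1\in\mathrm{int}\,\Sigma_i\cap\mathrm{int}\,\Sigma_j$ one has $[\vX_1,\vX^{*})\subset\mathrm{int}\,\Sigma_i\cap\mathrm{int}\,\Sigma_j$. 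Hence the points $\vx=(e^X,e^Y)$ with $(X,Y)\in[\vX_1,\vX^{*})$ lie in the interior of at least two uncertainty regions, so $r(\vx)\ge 2$ and $F_{\mathcal{F},\delta}=\mathbb{R}^2$ along this whole segment by Remark~\ref{rem:tdi}(ii).

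\emph{The two pieces.} By Lemma~\ref{lem:(1,1)_global_attractor} there is a solution $\vy$ of $\mathcal{T}_{\mathcal{F},\delta}$ with $\vy(0)=\vx_0$ that comes arbitrarily close to $(1,1)$; since $\mathrm{int}\,\Sigma_i\cap\mathrm{int}\,\Sigma_j$ is open and contains $(0,0)=\log(1,1)$, I may pick $T_1>0$ with $\vX_1:=\log\vy(T_1)\in\mathrm{int}\,\Sigma_i\cap\mathrm{int}\,\Sigma_j$. Now define $\vz(t)=\vy(t)$ on $[0,T_1]$ and $\vz(t)=\big(e^{X(t)},e^{Y(t)}\big)$ on $[T_1,\infty)$, where $(X(t),Y(t))=\vX_1+(1-e^{-(t-T_1)})(\vX^{*}-\vX_1)$. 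Then $\vz$ is absolutely continuous with $\vz(0)=\vx_0$; one has $\dot\vz=\dot\vy\in F_{\mathcal{F},\delta}$ on $[0,T_1]$, and $\dot\vz(t)\in\mathbb{R}^2=F_{\mathcal{F},\delta}(\log\vz(t))$ on $(T_1,\infty)$ since $\log\vz(t)\in[\vX_1,\vX^{*})$; thus $\vz$ is a solution of $\mathcal{T}_{\mathcal{F},\delta}$ (the velocity jump at $t=T_1$ is irrelevant for differential-inclusion solutions). Since $\vz(t)\to(e^{X^{*}},e^{Y^{*}})=(N,M)$ as $t\to\infty$, Definition~\ref{def:trajectory} is satisfied and $\vz$ is the desired trajectory.

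I expect the concatenation bookkeeping to be routine; the one place needing care is the geometric claim — that a point of $S^{\rm uc}$ is genuinely a common boundary point of two uncertainty regions, that these uncertainty regions pull back to convex slabs through the origin under $\log$, and hence that the connecting segment (minus its endpoint $\vX^{*}$) lies entirely in the ``$r\ge 2$'' region where the toric differential inclusion imposes no constraint. The argument is uniform in $\delta>0$ and goes through verbatim with $(N,M)$ replaced by $(n,m)$, or indeed by any point of $S^{\rm uc}$.
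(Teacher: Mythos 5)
Your proof is correct, but it takes a genuinely different route from the paper's. The paper proves reachability of $(N,M)$ the same way it proves Lemma~\ref{lem:(1,1)_global_attractor}: it writes down a two-reaction reversible network whose rate constants are tuned (using the fact that $(N,M)$ is the intersection of two boundary curves $y^{q_i}=e^{\delta_i}x^{p_i}$, $y^{q_j}=e^{\delta_j}x^{p_j}$) so that $(N,M)$ is a complex balanced equilibrium, embeds the resulting variable-$k$ system into $\mathcal{T}_{\tilde{\mathcal{F}},\delta}$ for the coarser two-line fan $\tilde{\mathcal{F}}$ via Proposition 3.2 of the cited reference, passes to $\mathcal{T}_{\mathcal{F},\delta}$ via Lemma~\ref{lem:embed_tdi_another}, and invokes the two-dimensional Global Attractor Theorem. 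You instead use Lemma~\ref{lem:(1,1)_global_attractor} only to reach a small neighbourhood of $(1,1)$, and then exploit the structure of the inclusion directly: in logarithmic coordinates the two uncertainty regions meeting at $(N,M)$ are convex slabs containing the origin in their interiors, so the half-open segment from $\log\vy(T_1)$ to $\log(N,M)$ stays in the intersection of their interiors, where $r\ge 2$ and hence $F_{\mathcal{F},\delta}=\mathbb{R}^2$ by Remark~\ref{rem:tdi}(ii); any reparametrized exponential of that segment is then an admissible solution limiting to $(N,M)$. Your geometric claim is sound (interior point plus closure point of a convex set gives a half-open segment in the interior, and $(N,M)\in S^{\rm uc}$ does lie on the boundary of both slabs), and your explicit concatenation at the exact point $\vy(T_1)$ is if anything cleaner than the paper's reliance on Remark~\ref{rem:connected_trajectory}. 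What the paper's route buys is uniformity (one mechanism handles both $(1,1)$ and $(N,M)$, at the cost of citing two external results twice); what yours buys is that the second leg is elementary and self-contained, and it visibly generalizes to every point of $S^{\rm uc}$ and indeed to every point of a pairwise intersection of uncertainty regions, essentially re-deriving Lemma~\ref{lem:uncert_in_invariant} along the way. Note that you cannot dispense with Lemma~\ref{lem:(1,1)_global_attractor} for the first leg, so the dependence on the embedded-mass-action machinery is reduced but not eliminated.
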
	

\begin{proof}
Note that from Step 2 in the construction of $M_{\mathcal{F},\delta}$, we have $(N,M)\in S^{\rm uc}$. Without loss of generality, we can assume that $(N,M)$ is  the intersection of curves of the form $y^{q_i}=e^{\delta_i}x^{p_i}$ and $y^{q_j}=e^{\delta_j}x^{p_j}$. Consider the reaction network $\mathcal{G}=(V,E)$, where $E=\{q_i Y\xrightleftharpoons[k_2]{k_1} p_iX, q_j Y\xrightleftharpoons[k_4]{k_3} p_jX\}$. Using Proposition~\cite[3.2]{craciun2019polynomial}, the variable-$k$ reversible dynamical system generated by $\mathcal{G}$ with 
\begin{eqnarray}
\epsilon=exp\left({\displaystyle\min_{\vs\rightleftharpoons\vs'\in E}||\vs'-\vs||\frac{\delta}{2}}\right)
\end{eqnarray}
can be embedded into the toric differential inclusion $\mathcal{T}_{\tilde{\mathcal{F}},\delta}$, where $\tilde{\mathcal{F}}$ is a fan whose line generators are given by $q_i Y = p_i X$ and $q_j Y = p_j X$. Setting the rate constants $k_1=k_3=1,k_2=e^{\delta_i}$ and $k_4=e^{\delta_j}$ for this variable-$k$ reversible dynamical system, we get that $(N,M)$ is a point of complex balanced equilibrium. Since this dynamical system is two-dimensional, the point $(N,M)$ is a global attractor~\cite{craciun2013persistence}. In particular, for any initial condition $\vx(0)\in\mathbb{R}^2_{>0}$, a solution $\vx(t)$ of this reversible dynamical system satisfies $\displaystyle\lim_{t\to\infty}\vx(t)=(N,M)$. Note that since the line generators of $\tilde{\mathcal{F}}$ are included in the line generators of $\mathcal{F}$, by Lemma~\ref{lem:embed_tdi_another} we get that $\mathcal{T}_{\tilde{\mathcal{F}},\delta}\subseteq \mathcal{T}_{\mathcal{F},\delta}$. Therefore, this variable-$k$ reversible dynamical system can be embedded into $\mathcal{T}_{\mathcal{F},\delta}$. This implies that for any $\vx_0\in\mathbb{R}^2_{>0}$, there exists a trajectory from $\vx_0$ to $(N,M)$.
\end{proof}

\begin{lem}\label{lem:uncert_in_invariant}
Consider the toric differential inclusion $\mathcal{T}_{\mathcal{F},\delta}$. Let $UC_1,UC_2$ be any two distinct uncertainty regions of $\mathcal{T}_{\mathcal{F},\delta}$. Then, we have $UC_1\cap UC_2\subseteq\Omega
^{\rm min,\rm{inv}}_{\mathcal{F},\delta}$.
\end{lem}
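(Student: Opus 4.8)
The plan is to show that every point $P$ of $UC_1\cap UC_2$ is reachable by a trajectory of $\mathcal{T}_{\mathcal{F},\delta}$ that starts at $(1,1)$ and stays in the region where the right-hand side equals all of $\mathbb{R}^2$; since $(1,1)$ lies in the minimal invariant region by Corollary~\ref{cor:(1,1)_min_inv}, invariance then forces $P$ into every invariant region, hence into $\Omega^{\rm min,\rm{inv}}_{\mathcal{F},\delta}$.

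First I would pass to logarithmic coordinates $X=\log x$, $Y=\log y$. There each uncertainty region $UC_i$ is the closed slab $\{\,|p_iX-q_iY|\le\delta_i\,\}$ between two parallel lines, hence convex; since the generators are distinct lines through the origin, the slabs are transverse and $UC_1\cap UC_2$ is a bounded convex set with nonempty interior. By Remark~\ref{rem:(1,1)_uncert}, $(1,1)$ lies in the interior of both $UC_1$ and $UC_2$, hence in the interior of $UC_1\cap UC_2$. The standard fact that, for a convex set, the half-open segment joining an interior point to any point of the set stays in the interior then shows that for every $P\in UC_1\cap UC_2$ the segment from $(1,1)$ to $P$, with $P$ excluded, lies entirely in the interior of $UC_1\cap UC_2$.

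Next I would invoke Remark~\ref{rem:tdi}(ii): at any point of the interior of $UC_1\cap UC_2$ we have $r(\vx)\ge 2$, so $F_{\mathcal{F},\delta}=\mathbb{R}^2$ there. Consequently any absolutely continuous curve confined to that interior is automatically a solution of $\mathcal{T}_{\mathcal{F},\delta}$. I would take the curve that traverses the segment above under a reparametrization (for instance $s(t)=1-e^{-t}$) so that it starts at $(1,1)$, remains in the interior for all finite $t$, and converges to $P$ as $t\to\infty$. By Definition~\ref{def:trajectory} this is a trajectory of $\mathcal{T}_{\mathcal{F},\delta}$ from $(1,1)$ to $P$. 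Finally, for any invariant region $\Omega$ of $\mathcal{T}_{\mathcal{F},\delta}$, Corollary~\ref{cor:(1,1)_min_inv} gives $(1,1)\in\Omega^{\rm min,\rm{inv}}_{\mathcal{F},\delta}\subseteq\Omega$, so this solution starts in $\Omega$ and, by invariance, stays in $\Omega$; since it converges to $P$ and invariant regions are closed, $P\in\Omega$. As $\Omega$ is arbitrary and $\Omega^{\rm min,\rm{inv}}_{\mathcal{F},\delta}$ is itself an invariant region, $P\in\Omega^{\rm min,\rm{inv}}_{\mathcal{F},\delta}$; and since $P\in UC_1\cap UC_2$ was arbitrary, $UC_1\cap UC_2\subseteq\Omega^{\rm min,\rm{inv}}_{\mathcal{F},\delta}$.

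The step I expect to be the main obstacle is guaranteeing that the approximating trajectory never leaves the open set on which $F_{\mathcal{F},\delta}=\mathbb{R}^2$ while still converging to $P$ (including the case where $P$ lies on the boundary of $UC_1\cap UC_2$); this is precisely what the convexity of $UC_1\cap UC_2$ in logarithmic coordinates supplies, via the interior-segment property, together with the convention that invariant regions are closed, which upgrades ``converges to $P$'' to ``contains $P$''. Anchoring the trajectory at $(1,1)$ through Corollary~\ref{cor:(1,1)_min_inv} is what makes Lemma~\ref{lem:(1,1)_global_attractor} unnecessary here, since we only need membership in invariant regions rather than reachability from every initial condition.
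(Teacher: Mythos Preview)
Your proposal is correct and takes essentially the same approach as the paper: anchor at $(1,1)\in\Omega^{\rm min,inv}_{\mathcal{F},\delta}$ via Corollary~\ref{cor:(1,1)_min_inv}, use Remark~\ref{rem:tdi}(ii) to see that $F_{\mathcal{F},\delta}=\mathbb{R}^2$ on $UC_1\cap UC_2$, build a trajectory from $(1,1)$ to an arbitrary point of the intersection, and conclude by invariance. Your added detail about convexity of the slab intersection in log coordinates and the interior-segment property makes the trajectory construction explicit, whereas the paper simply asserts that such a trajectory exists between any two points of $UC_1\cap UC_2$.
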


\begin{proof} 
For contradiction, assume that there exists $\vx_0\in\mathbb{R}^2_{>0}$ such that $\vx_0\in UC_1\cap UC_2$ but $\vx_0\notin\Omega^{\rm min,\rm{inv}}_{\mathcal{F},\delta}$. From Remark~\ref{rem:(1,1)_uncert}, we know that $(1,1)$ is in the interior of $UC_1\cap UC_2$. From Corollary~\ref{cor:(1,1)_min_inv}, we know that $(1,1)\in\Omega
^{\rm min,\rm{inv}}_{\mathcal{F},\delta}$. Note that from Remark~\ref{rem:tdi}.ii, we have $F_{\mathcal{F},\delta}(\vX)=\mathbb{R}^2$ for all points $\vx\in UC_1\cap UC_2$. This implies that for any two points $\vx_1,\vx_2\in UC_1\cap UC_2$, there is a trajectory from $\vx_1$ to $\vx_2$. In particular, there exists a trajectory from $(1,1)$ to $\vx_0$, contradicting the fact that $\vx_0\notin \Omega^{\rm min,\rm{inv}}_{\mathcal{F},\delta}$. 
\end{proof}

\begin{cor}\label{cor:(1,1)_int_min_inv}
The point $(1,1)$ is contained in the interior of $\Omega^{\rm min,\rm{inv}}_{\mathcal{F},\delta}$.
\end{cor}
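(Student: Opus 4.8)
The plan is to upgrade the conclusion of Corollary~\ref{cor:(1,1)_min_inv} from membership to interior membership by exhibiting a full-dimensional neighbourhood of $(1,1)$ that must lie inside every invariant region. First I would invoke Remark~\ref{rem:(1,1)_uncert} together with Remark~\ref{rem:tdi}.(ii): by Assumption~\ref{assump:positive_negative_generators} the fan $\mathcal{F}$ has at least three line generators with distinct slopes, so there are at least two (in fact at least three) distinct uncertainty regions, all of which contain $(1,1)$ in their interior. Pick two of them, say $UC_1$ and $UC_2$; their intersection $UC_1\cap UC_2$ is an open set (intersection of two open sets) containing $(1,1)$, hence it contains an open ball $B_\varepsilon((1,1))$ for some $\varepsilon>0$.

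Next I would apply Lemma~\ref{lem:uncert_in_invariant}, which states precisely that $UC_1\cap UC_2\subseteq\Omega^{\rm min,\rm{inv}}_{\mathcal{F},\delta}$. Chaining the inclusions gives $B_\varepsilon((1,1))\subseteq UC_1\cap UC_2\subseteq\Omega^{\rm min,\rm{inv}}_{\mathcal{F},\delta}$, so an entire open ball around $(1,1)$ is contained in the minimal invariant region, which is exactly the statement that $(1,1)$ lies in the interior of $\Omega^{\rm min,\rm{inv}}_{\mathcal{F},\delta}$. Corollary~\ref{cor:(1,1)_min_inv} is then subsumed as the weaker fact that $(1,1)$ itself belongs to this set, but it is worth citing it (or re-deriving it from Lemma~\ref{lem:(1,1)_global_attractor}) to make the logic self-contained.

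There is essentially no hard step here — the corollary is a one-line consequence of Lemma~\ref{lem:uncert_in_invariant} plus the openness of a finite intersection of open sets. The only mild subtlety worth a sentence is making sure that at least two \emph{distinct} uncertainty regions exist: this is guaranteed because Assumption~\ref{assump:positive_negative_generators} forces line generators of three different slopes, and distinct line generators produce distinct uncertainty regions (distinct bounding curves $y^{q_i}=e^{\pm\delta_i}x^{p_i}$), so there is genuine overlap to work with rather than a single region. Once that is noted, the proof is complete.
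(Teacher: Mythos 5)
Your proposal is correct and follows essentially the same route as the paper, which also deduces the corollary directly from Remark~\ref{rem:(1,1)_uncert} and Lemma~\ref{lem:uncert_in_invariant}; you have merely made explicit the (easy) step that a full open ball around $(1,1)$ sits inside the intersection of two uncertainty regions. The only nitpick is that the uncertainty regions themselves need not be open (they are regions bounded between two curves), so it is cleaner to intersect their \emph{interiors} --- which by Remark~\ref{rem:(1,1)_uncert} still contain $(1,1)$ --- before extracting the ball $B_\varepsilon((1,1))$.
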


\begin{proof}
Follows from Remark~\ref{rem:(1,1)_uncert} and Lemma~\ref{lem:uncert_in_invariant}.
\end{proof}

\begin{rem}\label{rem:connected_trajectory}
Consider the points $\vx_0,\vx_1,\vx_2\in\mathbb{R}^2_{>0}$. Note that if there is a trajectory of $\mathcal{T}_{\mathcal{F},\delta}$ from $\vx_0$ to $\vx_1$ and from $\vx_1$ to $\vx_2$, then there exists a trajectory from $\vx_0$ to $\vx_2$. This follows from the fact that solutions of toric differential inclusions depend continuously on their initial conditions.
\end{rem}

\begin{lem} \label{lem:travel_everywhere}
For any two points $\vx_1$ and $\vx_2$ in $\mathcal{M}_{\mathcal{F},\delta}$, there exists a trajectory from $\vx_1$ to $\vx_2$.
\end{lem}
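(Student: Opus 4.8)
The plan is to show that any two points of $\mathcal{M}_{\mathcal{F},\delta}$ can be joined by a trajectory by routing everything through the point $(1,1)$, which by Remark~\ref{rem:(1,1)_uncert} lies in the interior of every uncertainty region, and by Corollary~\ref{cor:(1,1)_int_min_inv} lies in the interior of $\Omega^{\rm min,\rm inv}_{\mathcal{F},\delta}$. By Remark~\ref{rem:connected_trajectory}, trajectories compose, so it suffices to prove the following two statements: (a) for every $\vx_1\in\mathcal{M}_{\mathcal{F},\delta}$ there is a trajectory from $\vx_1$ to $(1,1)$, and (b) for every $\vx_2\in\mathcal{M}_{\mathcal{F},\delta}$ there is a trajectory from $(1,1)$ to $\vx_2$. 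Statement (a) is immediate: Lemma~\ref{lem:(1,1)_global_attractor} gives a trajectory from \emph{any} point of $\mathbb{R}^2_{>0}$ to $(1,1)$, in particular from $\vx_1$. So the entire content is statement (b): reaching an arbitrary target point of $\mathcal{M}_{\mathcal{F},\delta}$ starting from $(1,1)$.

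For (b), I would first reduce to reaching the ``corner'' points $(N,M)$ and $(n,m)$ and then exploit the structure of the boundary $C^{\mathcal{F}}_{\delta}$. Lemma~\ref{lem:(M,N)_(m,n)_global_attractor} already supplies a trajectory from $(1,1)$ to $(N,M)$ (and the analogous argument, embedding the two-line fan whose generators pass through $(n,m)$, gives a trajectory from $(1,1)$ to $(n,m)$). Next, along the boundary $C^{\mathcal{F}}_{\delta}$: in each line-segment piece the point lies in exactly one uncertainty region (Lemma~\ref{lem:one_uncertainty}), where by Remark~\ref{rem:tdi}.(i) the right-hand side is the half-space whose boundary has exactly the slope of that segment, so one can move \emph{along} the segment in either direction; on the curved pieces $\mathcal{C}_{i_1},\dots,\mathcal{C}_{i_4}$ one can similarly travel along the curve (the right-hand side there is a cone pointing into the uncertainty region by Lemma~\ref{lem:curved_part}, and in particular contains vectors tangent to the curve after taking the large-$\delta$ limit). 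Chaining these, one obtains a trajectory from $(N,M)$ all the way around $C^{\mathcal{F}}_{\delta}$, so every boundary point is reachable from $(1,1)$. Finally, for a target $\vx_2$ in the interior of $\mathcal{M}_{\mathcal{F},\delta}$: since $F_{\mathcal{F},\delta}(\vX)=\mathbb{R}^2$ on the overlap of two uncertainty regions (Remark~\ref{rem:tdi}.(ii)), one can move freely inside such an overlap, and more generally one can steer $\vx_2$ toward $(1,1)$ by running the complex-balanced dynamical system of Lemma~\ref{lem:(1,1)_global_attractor} backward in a neighbourhood, or — cleaner — observe that $\vx_2$ can be connected to a boundary point of $\mathcal{M}_{\mathcal{F},\delta}$ by a path staying inside $\mathcal{M}_{\mathcal{F},\delta}$ along which the toric differential inclusion always admits a forward velocity (using that at interior points near the boundary the cone $F_{\mathcal{F},\delta}$ is a genuine half-space or $\mathbb{R}^2$, never degenerate), and then append the boundary-to-$(1,1)$ trajectory reversed.

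The main obstacle I anticipate is making the ``travel along the boundary in either direction'' step fully rigorous: a solution of a differential inclusion must move with velocity in $F_{\mathcal{F},\delta}$, and on a line segment that cone is a \emph{closed} half-space whose boundary is parallel to the segment, so the tangential direction is allowed — but one must check that one can realize arbitrarily small speeds (or reparametrize) so as not to overshoot at the polygonal vertices $A_i, B_i, C_i, D_i$ where the admissible half-space rotates, and similarly handle the transitions into the curved arcs and into the overlap regions where $F_{\mathcal{F},\delta}=\mathbb{R}^2$. This is a gluing/continuity argument of the same flavor as Remark~\ref{rem:connected_trajectory}, and I expect it to go through once one notes that consecutive admissible half-spaces (resp.\ cones) along $C^{\mathcal{F}}_{\delta}$ always share the relevant tangent direction by the monotonicity of slopes in Remark~\ref{rem:monotonic}; the only real care is at the four curved corners, where Lemma~\ref{lem:curved_part} and the $|y'|\to\infty$ estimate are exactly what is needed.
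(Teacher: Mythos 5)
Your proposal is essentially the paper's own argument: reach $(1,1)$ and $(N,M)$ via Lemma~\ref{lem:(1,1)_global_attractor} and Lemma~\ref{lem:(M,N)_(m,n)_global_attractor}, travel along the boundary $C^{\mathcal{F}}_{\delta}$, descend to $\vx_2$ using the half-space/cone structure of Remark~\ref{rem:tdi}, and compose via Remark~\ref{rem:connected_trajectory}; the paper merely organizes the cases by $r(\vx_2)\in\{0,1,\geq 2\}$ rather than by boundary versus interior, and it likewise asserts the travel-along-the-boundary step without the gluing details you flag. The only phrase to discard is ``running the dynamical system backward''/``trajectory reversed'' --- time-reversal of a solution of a differential inclusion is not a solution --- but your preferred ``cleaner'' alternative (a forward-admissible path from a suitable boundary point toward $\vx_2$) is exactly what the paper does.
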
 

\begin{proof}
We split our analysis into the following cases:
\begin{enumerate}
\item[] (i) $r(\vx_2)\geq 2$: Note that from Lemma~\ref{lem:(1,1)_global_attractor}, we get that there is a trajectory from $\vx_1$ to $(1,1)$. By Remark~\ref{rem:(1,1)_uncert}, we know that the point $(1,1)$ is contained in every uncertainty region, implying that $r((1,1))\geq 2$. Further, by Remark~\ref{rem:tdi}.(ii), we know that $F_{\mathcal{F},\delta}(\vX)=\mathbb{R}^2$ for every $\vx$ such that $r(\vx)\geq 2$. Therefore, there exists a trajectory from $(1,1)$ to the point $\vx_2$. From Remark~\ref{rem:connected_trajectory}, we get that there is a trajectory from $\vx_1$ to $\vx_2$, as desired.
\item[] (ii) $r(\vx_2)=1$: Let $UC$ denote the uncertainty region that contains the point $\vx_2$. From Lemma~\ref{lem:(M,N)_(m,n)_global_attractor}, we get that there is a trajectory from $\vx_1$ to $(N,M)$. Starting from $(N,M)$, one can go along the boundary $C_{\delta}^{\mathcal{F}}$ till one reaches a point on the line segment intersecting the uncertainty region $UC$. Note that Remark~\ref{rem:tdi}.(i) shows that the right-hand side of the toric differential inclusion at any point inside $UC$ is a half-space that points towards the interior of $\mathcal{M}_{\mathcal{F},\delta}$. Therefore, there exists a trajectory from that point on the line segment intersecting $UC$ to the point $\vx_2$. From Remark~\ref{rem:connected_trajectory}, we get that there that there is a trajectory from $\vx_1$ to $\vx_2$.
\item[] (iii) $r(\vx_2)= 0$: Let $UC_1$ and $UC_2$ denote the two uncertainty regions closest to the point $\vx_2$. From Lemma~\ref{lem:(M,N)_(m,n)_global_attractor}, we get that there is a trajectory from $\vx_1$ to $(N,M)$. Starting from $(N,M)$, one can go along the curve $C_{\delta}^{\mathcal{F}}$ till one reaches a point $P_1$ on the boundary of the uncertainty region $UC_1$ that is closest to the point $\vx_2$. From Remark~\ref{rem:tdi}.(iii), the right-hand side of the toric differential inclusion at $P_1$ is a proper cone formed by the intersection of two half-spaces, which are the right-hand sides of the toric differential inclusion corrresponding to the uncertainty regions $UC_1$ and $UC_2$. Consider the region enclosed between the boundary of the uncertainty regions $UC_1,UC_2$ and $C_{\mathcal{F},\delta}$ such that $r(\vx)=0$ for every point $\vx$ in this region. Note that the right-hand side of the toric differential inclusion at every point in this region is a proper cone formed by the intersection of two half-spaces, which are the right-hand sides of the toric differential inclusion corrresponding to the uncertainty regions $UC_1$ and $UC_2$. In particular, this cone contains the point $\vx_2$. Therefore, there exists a trajectory starting from the point $P_1$ to $\vx_2$. From Remark~\ref{rem:connected_trajectory}, we get that there is a trajectory from $\vx_1$ to $\vx_2$.
\end{enumerate}
\end{proof}
Finally we prove that the region $\mathcal{M}_{\mathcal{F},\delta}$ is the minimal invariant region.
	
\begin{theorem}\label{thm:min_inv_region}
For large enough $\delta$, $\mathcal{M}_{\mathcal{F},\delta}$ is the minimal invariant region for the toric differential inclusion $\mathcal{T}_{\mathcal{F},\delta}$, i.e., $\mathcal{M}_{\mathcal{F},\delta} = \Omega^{\rm{min,inv}}_{\mathcal{T}_{\mathcal{F},\delta}}$.
\end{theorem}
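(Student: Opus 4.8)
I would derive the equality $\mathcal{M}_{\mathcal{F},\delta} = \Omega^{\rm{min,inv}}_{\mathcal{T}_{\mathcal{F},\delta}}$ from two facts: (a) $\mathcal{M}_{\mathcal{F},\delta}$ is an invariant region of $\mathcal{T}_{\mathcal{F},\delta}$, and (b) $\mathcal{M}_{\mathcal{F},\delta}$ is contained in every invariant region of $\mathcal{T}_{\mathcal{F},\delta}$. By Definition~\ref{def:invariant_regions}, (a) and (b) together say precisely that $\mathcal{M}_{\mathcal{F},\delta}$ is the minimal invariant region. Fact (a) is exactly Proposition~\ref{lem:invariant}, which already holds once $\delta$ is large enough, so the real content is fact (b).

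To prove (b), fix an arbitrary nonempty closed invariant region $\Omega$ of $\mathcal{T}_{\mathcal{F},\delta}$ and pick a point $\vx_0\in\Omega$. The idea is to route every point of $\mathcal{M}_{\mathcal{F},\delta}$ through the single hub point $(N,M)$. First, Lemma~\ref{lem:(M,N)_(m,n)_global_attractor} produces a trajectory of $\mathcal{T}_{\mathcal{F},\delta}$ from $\vx_0$ to $(N,M)$; since $\vx_0\in\Omega$ and $\Omega$ is invariant, the solution realizing this trajectory stays in $\Omega$ for all positive times, and hence $(N,M)\in\overline{\Omega}=\Omega$. Second, Lemma~\ref{lem:uc_in_invariant} gives $(N,M)\in S^{\rm uc}\subseteq M^{\mathcal{F}}_{\delta}$ for $\delta$ large enough, so $(N,M)$ is a common point of $\Omega$ and $\mathcal{M}_{\mathcal{F},\delta}$. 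Third, for an arbitrary target $\vx^*\in\mathcal{M}_{\mathcal{F},\delta}$, Lemma~\ref{lem:travel_everywhere} produces a trajectory from $(N,M)$ to $\vx^*$, and the same reasoning applied to the invariant set $\Omega$ with the starting point $(N,M)\in\Omega$ forces $\vx^*\in\Omega$. As $\vx^*$ was arbitrary, $\mathcal{M}_{\mathcal{F},\delta}\subseteq\Omega$. (One may instead concatenate the two trajectories via Remark~\ref{rem:connected_trajectory} into a single solution issuing from $\vx_0$ that witnesses $\vx^*\in\Omega$.) Combining (a) and (b), and noting that $\mathcal{M}_{\mathcal{F},\delta}$ is itself a nonempty closed invariant region (it is bounded by the closed curve $C^{\mathcal{F}}_{\delta}$), yields the theorem.

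I do not expect a single hard computation here: Proposition~\ref{lem:invariant} together with Lemmas~\ref{lem:(M,N)_(m,n)_global_attractor}, \ref{lem:uc_in_invariant} and \ref{lem:travel_everywhere} carries all of the geometric weight, and the theorem is their assembly. The one genuinely delicate point is reconciling Definition~\ref{def:trajectory} --- which guarantees only that $\vx^*$ is approached arbitrarily closely by a solution, not attained in finite time --- with the set-membership conclusion $\vx^*\in\Omega$. Passing from ``$\vx^*$ lies in the closure of a solution that stays in $\Omega$'' to ``$\vx^*\in\Omega$'' requires $\Omega$ to be closed; this is the natural reading of ``invariant region'', and is implicitly needed even for $\Omega^{\rm{min,inv}}_{\mathcal{T}_{\mathcal{F},\delta}}$ to be nonempty, since the empty set is vacuously invariant. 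If one wanted the statement for arbitrary invariant regions, one would additionally check that the straight segments of $C^{\mathcal{F}}_{\delta}$ are traversed in finite time --- there the admissible-velocity cone is a half-plane whose bounding line is parallel to the segment, so a solution can slide along it --- and combine this with $F_{\mathcal{F},\delta}(\vX)=\mathbb{R}^2$ on $\{\vx: r(\vx)\geq 2\}$; but the clean route is simply to take invariant regions to be closed. Finally, ``$\delta$ large enough'' is invoked both through Proposition~\ref{lem:invariant} and through Lemma~\ref{lem:uc_in_invariant} (to place $(N,M)$ inside $\mathcal{M}_{\mathcal{F},\delta}$), and one takes the larger of the two thresholds.
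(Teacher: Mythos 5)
Your proof is correct and follows essentially the same route as the paper: invariance comes from Proposition~\ref{lem:invariant}, and minimality from anchoring $(N,M)$ in every invariant region and then applying Lemma~\ref{lem:travel_everywhere} to reach an arbitrary point of $\mathcal{M}_{\mathcal{F},\delta}$. The only differences are cosmetic --- the paper places $(N,M)$ in every invariant region via Lemma~\ref{lem:uncert_in_invariant} (since $(N,M)\in S^{\rm uc}$ lies in an intersection of two uncertainty regions) rather than via Lemma~\ref{lem:(M,N)_(m,n)_global_attractor} as you do --- and your explicit remark that one must take invariant regions to be closed (because trajectories in the sense of Definition~\ref{def:trajectory} only approach their targets) is a genuine subtlety that the paper leaves implicit.
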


\begin{proof}
By Lemma~\ref{lem:invariant}, we know that $\mathcal{M}_{\mathcal{F},\delta}$ is an invariant region for the toric differential inclusion $\mathcal{T}_{\mathcal{F},\delta}$. We now show that it is minimal, i.e., every invariant region 
contains $\mathcal{M}_{\mathcal{F},\delta}$. Note from the construction of $\mathcal{M}_{\mathcal{F},\delta}$ that the point $(N,M)$ lies on the intersection of two uncertainty regions, i.e., $(N,M)\in S^{\rm uc}$. By Lemma~\ref{lem:uncert_in_invariant}, the point $(N,M)$ must belong to every invariant region. Further, by Lemma~\ref{lem:travel_everywhere}, there exists a trajectory from $(N,M)$ to any point in $\mathcal{M}_{\mathcal{F},\delta}$. This implies that $\mathcal{M}_{\mathcal{F},\delta}$ is contained in every invariant region, as desired.
\end{proof}

\begin{cor}\label{cor:(1,1)_inv}
The point $(1,1)$ is contained in the interior of $\mathcal{M}_{\mathcal{F},\delta}$.
\end{cor}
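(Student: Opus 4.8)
The plan is to deduce this immediately by combining two results already established in this section: Theorem~\ref{thm:min_inv_region}, which identifies $\mathcal{M}_{\mathcal{F},\delta}$ with the minimal invariant region $\Omega^{\rm{min,inv}}_{\mathcal{T}_{\mathcal{F},\delta}}$ for large $\delta$, and Corollary~\ref{cor:(1,1)_int_min_inv}, which states that $(1,1)$ lies in the interior of $\Omega^{\rm{min,inv}}_{\mathcal{T}_{\mathcal{F},\delta}}$.

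First I would invoke Theorem~\ref{thm:min_inv_region} to fix a threshold on $\delta$ above which the set-theoretic equality $\mathcal{M}_{\mathcal{F},\delta} = \Omega^{\rm{min,inv}}_{\mathcal{T}_{\mathcal{F},\delta}}$ holds; this equality is of sets, so it preserves topological notions such as interior. Next I would recall that Corollary~\ref{cor:(1,1)_int_min_inv} (itself a consequence of Remark~\ref{rem:(1,1)_uncert} and Lemma~\ref{lem:uncert_in_invariant}) places $(1,1)$ in $\mathrm{int}(\Omega^{\rm{min,inv}}_{\mathcal{T}_{\mathcal{F},\delta}})$. Substituting the equality from Theorem~\ref{thm:min_inv_region} then yields $(1,1) \in \mathrm{int}(\mathcal{M}_{\mathcal{F},\delta})$, which is exactly the claim.

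There is essentially no obstacle here: the corollary is a bookkeeping consequence of the two cited results, and the only point to be careful about is that Theorem~\ref{thm:min_inv_region} carries the hypothesis "$\delta$ large enough," so the statement of the corollary must (and does) inherit that same qualifier. No new geometric argument about the polygonal lines $I_1,\dots,I_4$ or the curves $\mathcal{C}_{i_1},\dots,\mathcal{C}_{i_4}$ is needed, since all of that work has already been absorbed into the proof of Theorem~\ref{thm:min_inv_region}.

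\begin{proof}
By Theorem~\ref{thm:min_inv_region}, for $\delta$ large enough we have $\mathcal{M}_{\mathcal{F},\delta} = \Omega^{\rm{min,inv}}_{\mathcal{T}_{\mathcal{F},\delta}}$, and in particular the two sets have the same interior. By Corollary~\ref{cor:(1,1)_int_min_inv}, the point $(1,1)$ lies in the interior of $\Omega^{\rm{min,inv}}_{\mathcal{T}_{\mathcal{F},\delta}}$. Combining these, $(1,1)$ lies in the interior of $\mathcal{M}_{\mathcal{F},\delta}$.
\end{proof}
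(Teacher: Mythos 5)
Your proposal is correct and matches the paper's own proof exactly: the paper derives this corollary by combining Corollary~\ref{cor:(1,1)_int_min_inv} with Theorem~\ref{thm:min_inv_region}, which is precisely your argument. Your added remark about inheriting the ``$\delta$ large enough'' hypothesis is a sensible point of care, though the paper leaves it implicit.
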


\begin{proof}
Follows from Corollary~\ref{cor:(1,1)_int_min_inv} and Theorem~\ref{thm:min_inv_region}.
\end{proof}

\section{The minimal globally attracting region}\label{sec:min_glob_region}	

The goal of this section is to show that $\mathcal{M}_{\mathcal{F},\delta}$ is the minimal globally attracting region for the toric differential inclusion $\mathcal{T}_{\mathcal{F},\delta}$. Towards this, we recall the construction of $\mathcal{M}_{\mathcal{F},\delta}$ from Section~\ref{sec:construction}. Note that for $\delta$ large enough, the line segments connecting the polygonal paths $I_1,I_2$ and $I_3,I_4$ approach lines that are parallel to the coordinate axis, making the resultant polygon convex. Let $conv(\mathcal{M}_{\mathcal{F},\delta})$ denote the convex hull of $\mathcal{M}_{\mathcal{F},\delta}$. Therefore, for a sufficiently large $\delta$, $conv(\mathcal{M}_{\mathcal{F},\delta})$ is a closed convex region enclosed by the polygon described above. Figure~\ref{fig:convex} shows $conv(\mathcal{M}_{\mathcal{F},\delta})$ for a sufficiently large $\delta$.

\begin{figure}[H]
\begin{center}
\includegraphics[scale=0.6]{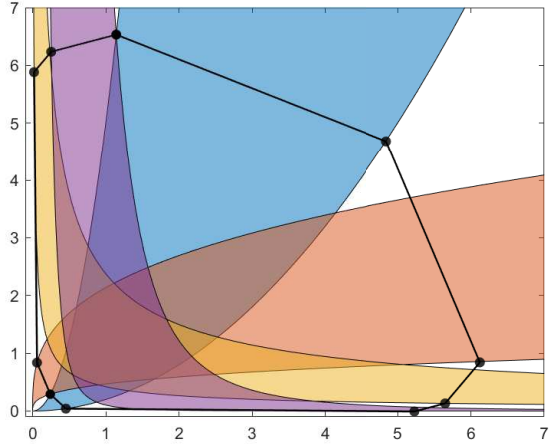}
\caption{The $conv(\mathcal{M}_{\mathcal{F},\delta})$ for large $\delta$.}
\label{fig:convex}
\end{center}
\end{figure}

\begin{theorem}
Given a fan $\mathcal{F}$ and a sufficiently large $\delta$, the region $\mathcal{M}_{\mathcal{F},\delta}$ is the minimal globally attracting region for the toric differential inclusion $\mathcal{T}_{\mathcal{F},\delta}$, i.e. $\Omega^{\rm{min},\rm{glob}}_{\mathcal{T}_{\mathcal{F},\delta}}=\mathcal{M}_{\mathcal{F},\delta}$.
\end{theorem}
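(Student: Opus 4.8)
The plan is to establish the two inclusions separately: (a) every strict solution $\vx(t)$ with $\vx(0)\in\mathbb{R}^2_{>0}$ satisfies $\omega(\vx(0))\subseteq\mathcal{M}_{\mathcal{F},\delta}$, so that $\mathcal{M}_{\mathcal{F},\delta}$ is itself a globally attracting region; and (b) $\mathcal{M}_{\mathcal{F},\delta}\subseteq\Omega$ for every globally attracting region $\Omega$. Together these give $\Omega^{\rm{min},\rm{glob}}_{\mathcal{T}_{\mathcal{F},\delta}}=\mathcal{M}_{\mathcal{F},\delta}$. For (a), since $\mathcal{M}_{\mathcal{F},\delta}$ is closed and invariant (Proposition~\ref{lem:invariant}), it suffices to show that every strict solution enters $\mathcal{M}_{\mathcal{F},\delta}$ in finite time; its tail then lies in $\mathcal{M}_{\mathcal{F},\delta}$, forcing $\omega(\vx(0))\subseteq\mathcal{M}_{\mathcal{F},\delta}$.

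For (a) I would argue in two stages via $conv(\mathcal{M}_{\mathcal{F},\delta})$, which for $\delta$ large is a compact convex polygon containing $\mathcal{M}_{\mathcal{F},\delta}$. First, by Lemma~\ref{lem:uncert_in_invariant} together with Remark~\ref{rem:tdi}(ii), the set $\{\vx:r(\vx)\ge 2\}=\bigcup_{i\ne j}(UC_i\cap UC_j)$ lies in $\mathcal{M}_{\mathcal{F},\delta}\subseteq conv(\mathcal{M}_{\mathcal{F},\delta})$, so $F_{\mathcal{F},\delta}(\log\vx)\ne\mathbb{R}^2$ at every $\vx\notin conv(\mathcal{M}_{\mathcal{F},\delta})$; and by the Nagumo-type computation of Proposition~\ref{lem:invariant} on the polygonal boundary pieces, Lemma~\ref{lem:curved_part} on the curved pieces, and the explicit fixed half-spaces and cones governing large $\vx$ (Remark~\ref{rem:tdi}(i),(iii) and Assumption~\ref{assump:positive_negative_generators}), the entire cone $F_{\mathcal{F},\delta}(\log\vx)$ points into $conv(\mathcal{M}_{\mathcal{F},\delta})$ with an angular margin from the nearest supporting hyperplane that is bounded below uniformly in $\vx$. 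Hence $dist(\vx(t),conv(\mathcal{M}_{\mathcal{F},\delta}))$ is non-increasing, so $\vx(t)$ stays in a fixed compact set, and, because $\vx(t)$ is strict so that $\|\dot\vx\|>\rho$ on that compact set wherever $F\ne\mathbb{R}^2$, this distance in fact decreases at a rate bounded below while positive; thus $\vx(t)$ enters $conv(\mathcal{M}_{\mathcal{F},\delta})$ in finite time and never leaves. Second, on the finitely many ``corner'' regions forming $conv(\mathcal{M}_{\mathcal{F},\delta})\setminus\mathcal{M}_{\mathcal{F},\delta}$, where again $r\le 1$ and (by Lemma~\ref{lem:curved_part}) the right-hand side points into $\mathcal{M}_{\mathcal{F},\delta}$ with speed bounded below, the same reasoning forces $\vx(t)$ to enter the invariant set $\mathcal{M}_{\mathcal{F},\delta}$ in finite time. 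This proves (a).

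For (b), fix a globally attracting region $\Omega$ and a point $\vx^*\in\mathcal{M}_{\mathcal{F},\delta}$; it suffices to produce a strict solution $\vx(t)$ with $\vx^*\in\omega(\vx(0))$, since then $\vx^*\in\omega(\vx(0))\subseteq\Omega$. If $r(\vx^*)\ge 2$ then $F_{\mathcal{F},\delta}(\log\vx^*)=\mathbb{R}^2$ and the constant solution $\vx(t)\equiv\vx^*$ is (vacuously) strict with $\omega(\vx^*)=\{\vx^*\}$. If $r(\vx^*)\le 1$ the constant solution is not strict, so I instead build $\vx(t)$ by concatenation: starting at $\vx^*$, repeatedly (i) follow a trajectory toward $(1,1)$ (Lemma~\ref{lem:(1,1)_global_attractor}) until the solution enters the open set $\{r\ge 2\}$ around $(1,1)$, where $F=\mathbb{R}^2$; (ii) inside that set, where the right-hand side is $\mathbb{R}^2$, ``wait'' with $\dot\vx\equiv 0$ for a prescribed length of time; (iii) follow a trajectory from $(1,1)$ back toward $\vx^*$ (Lemma~\ref{lem:travel_everywhere}) until the solution is within $1/k$ of $\vx^*$ on the $k$-th pass. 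Choosing the waiting times in step (ii) large enough ensures that the $k$-th pass occurs at a time $t_k\ge k$, so $\vx(t_k)\to\vx^*$ with $t_k\to\infty$ and hence $\vx^*\in\omega(\vx(0))$. Each non-waiting segment can be taken to move with speed bounded below wherever $F\ne\mathbb{R}^2$ (in each constant-right-hand-side region one moves at unit speed along an admissible direction, the waiting and other $\mathbb{R}^2$-regions absorbing the time bookkeeping), so $\vx(t)$ is a strict solution. Thus $\mathcal{M}_{\mathcal{F},\delta}\subseteq\Omega$, and with (a) this yields $\Omega^{\rm{min},\rm{glob}}_{\mathcal{T}_{\mathcal{F},\delta}}=\mathcal{M}_{\mathcal{F},\delta}$.

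The main obstacle I anticipate is the uniform inward-pointing estimate in the first stage of (a): showing that $F_{\mathcal{F},\delta}(\log\vx)$ makes an angle bounded away from zero with the boundary of $conv(\mathcal{M}_{\mathcal{F},\delta})$ at \emph{all} points of its complement, so that the distance function strictly decreases at a uniform rate and boundedness of the trajectory comes for free. This means stitching together the local facts already in hand --- $\{r\ge 2\}\subseteq\mathcal{M}_{\mathcal{F},\delta}$ (Lemma~\ref{lem:uncert_in_invariant}), the slope monotonicity along the construction (Remark~\ref{rem:monotonic}), the behaviour of the curved pieces (Lemma~\ref{lem:curved_part}), and the Nagumo computation of Proposition~\ref{lem:invariant} --- into one uniform statement, using finiteness of the fan and Assumption~\ref{assump:positive_negative_generators} to control the directions at infinity. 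A secondary, more routine point is verifying that the concatenated path in (b) is genuinely a strict solution: that it is absolutely continuous across the joins and that the trajectories furnished by Lemmas~\ref{lem:(1,1)_global_attractor} and~\ref{lem:travel_everywhere} can be chosen, or reparametrized, to have speed bounded below on the region where $F\ne\mathbb{R}^2$.
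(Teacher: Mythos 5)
Your overall architecture matches the paper's: you prove the two inclusions separately, and for minimality you use exactly the paper's device --- a trajectory shuttling between $(1,1)$ and an arbitrary $P\in\mathcal{M}_{\mathcal{F},\delta}$ via Lemmas~\ref{lem:(1,1)_global_attractor} and~\ref{lem:travel_everywhere}, with $t_k\to\infty$ and $\vx(t_k)\to P$. Your extra care in part (b) about making the concatenated solution \emph{strict} (waiting inside the $F=\mathbb{R}^2$ region, reparametrizing to unit speed elsewhere) addresses a point the paper's proof passes over silently, and is welcome. The finishing move for the attracting property --- solutions trapped in $conv(\mathcal{M}_{\mathcal{F},\delta})\setminus\mathcal{M}_{\mathcal{F},\delta}$ must have $r=0$ there, so strictness pushes them onto $\mathcal{C}_{i_1}\cup\cdots\cup\mathcal{C}_{i_4}$ and into $\mathcal{M}_{\mathcal{F},\delta}$ --- is also the paper's.

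The one genuine divergence is your Lyapunov function in part (a). You propose the Euclidean distance to the single polygon $conv(\mathcal{M}_{\mathcal{F},\delta_0})$; the paper instead foliates $\mathbb{R}^2_{>0}$ by the nested family $\mathcal{P}(\delta)=\partial\, conv(\mathcal{M}_{\mathcal{F},\delta})$ for $\delta\ge\delta_0$ and uses the function $\Phi(\vx)=\delta$ for $\vx\in\mathcal{P}(\delta)$, driving $\Phi$ down with a subgradient mean-value argument. The paper's choice is adapted to the dynamics: each level set $\mathcal{P}(\delta)$ bounds an invariant region for $\mathcal{T}_{\mathcal{F},\delta}\supseteq\mathcal{T}_{\mathcal{F},\delta_0}$, so the weak inward-pointing of $F_{\mathcal{F},\delta_0}$ across every level set is inherited directly from Proposition~\ref{lem:invariant}, and only the strict margin $\eta$ needs additional work. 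Your choice forces a fresh global claim --- that every $v\in F_{\mathcal{F},\delta_0}(\log\vx)$ satisfies $v\cdot(\vx-\pi(\vx))\le 0$ at \emph{every} exterior point $\vx$, where $\pi$ is the projection onto the polygon --- which does not follow from the Nagumo condition on the boundary itself. Concretely, at a point deep inside a single uncertainty region whose nearest polygon point is a vertex, the admissible velocity along the boundary of the half-space $F$ (i.e., along the attracting direction) is orthogonal to the dominant component of $\vx-\pi(\vx)$ but not to its bounded remainder, so the derivative of your distance function along that velocity is $O(1/|\vx-\pi(\vx)|)$ and can be positive: ``non-increasing'' fails pointwise even though the trajectory does eventually fall inward. (To be fair, the paper's asserted strict bound $\grad\Phi_i\cdot\dot\vx<-\eta$ has a cognate weak spot at velocities tangent to an edge of $\mathcal{P}(\delta)$, and both arguments ultimately need the strictness of the solution to rule out indefinite tangential sliding.) If you keep your route, you should either replace the Euclidean distance by the paper's foliation parameter, or prove the uniform inward estimate you flagged as your ``main obstacle'' in a form robust to these tangential directions; as written, that step is the gap.
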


\begin{proof}

We first show that $\mathcal{M}_{\mathcal{F},\delta}\subseteq \Omega^{\rm{min},\rm{glob}}_{\mathcal{T}_{\mathcal{F},\delta}}$. Towards this, it is sufficient to show that every point in $\mathcal{M}_{\mathcal{F},\delta}$ is contained in the omega-limit  set of some trajectory of $\mathcal{T}_{\mathcal{F},\delta}$. Note that by Lemma~\ref{lem:(1,1)_global_attractor}, there is a trajectory from $\vx_0\in\mathbb{R}^2_{>0}$ to the point $(1,1)$. In addition, Corollary~\ref{cor:(1,1)_inv} shows that $(1,1)$ is in the interior of $\mathcal{M}_{\mathcal{F},\delta}$. Consider a point $P$ in $\mathcal{M}_{\mathcal{F},\delta}$. By Lemma~\ref{lem:travel_everywhere}, we get that there is a trajectory from $(1,1)$ to $P$. Choose some $\zeta_1>0$. Then there is a trajectory $\vx(t)$ of $\mathcal{T}_{\mathcal{F},\delta}$ with $\vx(0)=(1,1)$, for which there exists $t_1>0$ such that $||\vx(t_1) - P||<\zeta_1$. Now choose some $\zeta'_1>0$. By Lemma~\ref{lem:(1,1)_global_attractor}, there is a trajectory starting from $\vx(t_1)$, for which there exists $t'_1>t_1$ such that $||\vx(t'_1) - (1,1)||<\zeta'_1$. Now choose $0<\zeta_2 < \zeta_1$. Again by Lemma~\ref{lem:travel_everywhere}, we get that there is a trajectory starting from $\vx(t'_1)$ for which there exists $t_2>t'_1>t_1$ such that $||\vx(t_2) - P||<\zeta_2$. Repeating this back and forth between the points $(1,1)$ and $P$, we get a sequence of times $t_1 < t_2 <...< t_k$ with $\displaystyle\lim_{k\to\infty}t_k=\infty$ such that $\displaystyle\lim_{k\to\infty}\vx(t_k)=P$. This implies that the point $P$ belongs to the omega-limit set of the trajectory $\vx(t)$. Since the choice of the point $P$ was arbitrary, the set $\mathcal{M}_{\mathcal{F},\delta}$ is contained in the minimal globally attracting region, i.e. $\mathcal{M}_{\mathcal{F},\delta}\subseteq \Omega^{\rm{min},\rm{glob}}_{\mathcal{T}_{\mathcal{F},\delta}}$, as required.

We now show the other direction, i.e., $\Omega^{\rm{min},\rm{glob}}_{\mathcal{T}_{\mathcal{F},\delta}}\subseteq \mathcal{M}_{\mathcal{F},\delta}$. Towards this, it suffices to show that $\mathcal{M}_{\mathcal{F},\delta}$ is a globally attracting region. Let us denote by $\mathcal{P}$ the boundary of $conv(\mathcal{M}_{\mathcal{F},\delta})$ and by $conv(\mathcal{P})$ the convex hull of $\mathcal{P}$. We will denote by $\mathcal{P}(\delta_0)$ the convex polygon corresponding to $\delta=\delta_0$ that can be constructed using the procedure described in Section~\ref{sec:construction} when $\delta_0$ is large enough. Note that $\mathcal{P}(\delta)$ varies continuously with $\delta$. In addition, we also have that $conv(\mathcal{P}(\delta))$ is a cover of $\mathbb{R}^2_{>0}$, i.e., $\displaystyle\bigcup_{\delta=\delta_0}^{\infty}conv(\mathcal{P}(\delta))=\mathbb{R}^2_{>0}$ and $conv(\mathcal{P}(\delta'))\subset conv(\mathcal{P}(\delta''))$ if $\delta' < \delta''$. Now choose $\gamma$ small enough so that $M_{\delta_0-\gamma}^{\mathcal{F}}$ can still be constructed. Consider a strict solution $\vx(t)$ of the toric differential inclusion $\mathcal{T}_{\delta}^{\mathcal{F}}$ with initial condition $\vx(0)\in\mathbb{R}^2_{>0}$. We will show that $\vx(t)\in conv(\mathcal{P}(\delta_0))$ for a sufficiently large $t$.

Note that since we have $\displaystyle\bigcup_{\delta=\delta_0-\gamma}^{\infty}conv(\mathcal{P}(\delta))=\mathbb{R}^2_{>0}$, one can choose $\delta_1 > \delta_0$ such that $\vx(0)\in\displaystyle\bigcup_{\delta=\delta_0-\gamma}^{\delta_1}conv(\mathcal{P}(\delta))$. Define a function $\Phi: \displaystyle\bigcup_{\delta=\delta_0-\gamma}^{\delta_1}\mathcal{P}(\delta)\to [\delta_0-\gamma,\delta_1]$ such that 
\begin{eqnarray}
\Phi(x,y)=\delta\,\, \text{if}\,\, (x,y)\in\mathcal{P}(\delta).
\end{eqnarray}
We will show that $\Phi(\vx(t))\leq \delta_0$ for $t$ large enough. For contradiction, assume not. Then since $\Phi^{-1}[0,\delta_0]=conv(\mathcal{P}(\delta_0))$ and $\Phi^{-1}[0,\delta_1]=conv(\mathcal{P}(\delta_1))$ are invariant by Lemma~\ref{lem:invariant}, we get $\Phi(\vx(t))\in(\delta_0,\delta_1]$ for all $t$. 

The function $\Phi$ is differentiable on its domain except at the points in the following set $\mathcal{W}=\{B_u, B_{u-1}, B_p, B_{p-1}, \cdots, B_1, A_0=(N,M), A_1, A_2, \cdots, A_q, C_r, C_{r-1}\cdots, C_1, C_0=(n,m), D_1, D_2, \cdots, D_s\}$. Consider $\vx_0\in \mathcal{W}$ . Let $\Phi_1$ and $\Phi_2$ denote the smooth functions that define $\Phi$ on the two line segments in a neighborhood of $\vx_0$. The subgradient of $\Phi$ at $\vx_0$ is~\cite[Definition 8.3]{rockafellar2009variational}
\begin{eqnarray}\label{eq:subgradient}
\displaystyle\partial\Phi(\vx_0)=\{\lambda\grad\Phi_1(\vx_0)+(1-\lambda)\grad\Phi_2(\vx_0)\ |\ \lambda\in[0,1]\}
\end{eqnarray}
This subgradient $\partial \Phi$ exists and is continuous~\cite[Definition 9.1]{rockafellar2009variational}. One can compose $\Phi$ with $\vx(t)$ which is differentiable to get a strictly continuous function $\Phi\circ\vx(t)$. Consequently, one can apply a generalized mean value theorem~\cite[Theorem 10.48]{rockafellar2009variational} to $\Phi\circ\vx(t)$ to get that there exists a $t_0\in[0,t]$ such that
\begin{eqnarray}\label{eq:mean_value_theorem}
\displaystyle\Phi(\vx(t))-\Phi(\vx(0)) = tg_t  \text{ for some } g_t\in \partial (\Phi\circ\vx)(t_0)
\end{eqnarray}
By the chain rule~\cite[Theorem 10.6]{rockafellar2009variational}, we have 
\begin{eqnarray}\label{eq:chain_rule_subgradient}
\partial (\Phi\circ\vx)(t)\subset\{\vw\cdot\dot{\vx}(t)\,|\,\vw\in\partial\Phi(\vx(t))\}.
\end{eqnarray}
Note that Lemma~\ref{lem:invariant} proves that $\mathcal{M}_{\mathcal{F},\delta}$ is invariant by showing that along its boundary, the vector field points towards the interior of $\mathcal{M}_{\mathcal{F},\delta}$. In particular, we can extend this proof to show that for a compact set $J\in\mathbb{R}^2_{>0}$, there exists a $\eta>0$ such that for $\vx(t)\in J$, we have $\grad\Phi_1(\vx(t))\cdot\dot{\vx}(t) < -\eta$ and  $\grad\Phi_2(\vx(t))\cdot\dot{\vx}(t) < -\eta$. Note that from Equation~(\ref{eq:subgradient}), we have that the subgradient $\partial\Phi(\cdot)$ is a convex combination of $\grad\Phi_1(\cdot)$ and $\grad\Phi_2(\cdot)$. This implies that $\displaystyle\partial\Phi(\vx(t))\dot{\vx}(t) < -\eta$. Using Equation~(\ref{eq:chain_rule_subgradient}), we get
\begin{eqnarray}
\sup\limits_{t\geq 0}\partial (\Phi\circ\vx)(t)<-\eta
\end{eqnarray}
From the mean value theorem given by Equation~(\ref{eq:mean_value_theorem}), we get
\begin{eqnarray}
\Phi(\vx(t))<\Phi(\vx(0)) - \eta t
\end{eqnarray}
for all $t>0$, contradicting that $\Phi(\vx(t))\in[\delta_0,\delta_1]$ for all $t\geq 0$. 

Therefore, $\vx(t)\in conv(\mathcal{P}(\delta_0))= conv(M_{\delta_0}^{\mathcal{F}})$ for $t$ large enough. If $\vx(t)\in conv(M_{\delta_0}^{\mathcal{F}})\setminus M_{\delta_0}^{\mathcal{F}} $, then our construction of $M_{\delta_0}^{\mathcal{F}}$ would imply $r(\vx)=0$. Since $\vx(t)$ is a strict solution, we would have $\vx(t_0)\in\mathcal{C}_{i_1}\cup\mathcal{C}_{i_2}\cup\mathcal{C}_{i_3}\cup\mathcal{C}_{i_4}\in M_{\delta_0}^{\mathcal{F}}$ for $t_0$ large enough. This implies that $M_{\delta_0}^{\mathcal{F}}$ is a globally attracting region, as desired.
\end{proof}

%\begin{rem}
%The minimal invariant region $\mathcal{M}_{\mathcal{F},\delta}$ is contained in the positive orthant, therefore the system is permanent.
%\end{rem}

\section{Special cases}\label{sec:special_cases}

Note that the construction of $\mathcal{M}_{\mathcal{F},\delta}$ outlined in Section~\ref{sec:construction} makes certain assumptions on the underlying fan; in particular see Assumption~\ref{assump:positive_negative_generators}. In this section, we show how to handle the cases when (i) the line generators of the fan have either all positive slopes or all negative slopes or (ii) at least one line generator of the fan has zero slope or infinite slope. In both these cases, the construction of $\mathcal{M}_{\mathcal{F},\delta}$ proceeds like in Section~\ref{sec:construction}, albeit with certain modifications as we show below.

\begin{enumerate}
\item \emph{If the line generators of the fan have either all positive slopes or all negative slopes}: The procedure for the case with all positive slopes is completely analogous with the procedure outlined in Section~\ref{sec:construction}. We present a procedure for the case with all negative slopes. The starting point $(N,M)$ is chosen as described in Step 2 of Section~\ref{sec:construction}. The starting point $(n,m)$ is chosen differently as shown in Figure~\ref{fig:two_neg}. The construction of $\mathcal{M}_{\mathcal{F},\delta}$ then proceeds like in Section~\ref{sec:construction}.

\begin{figure}[h!]
\begin{center}
\includegraphics[scale=0.6]{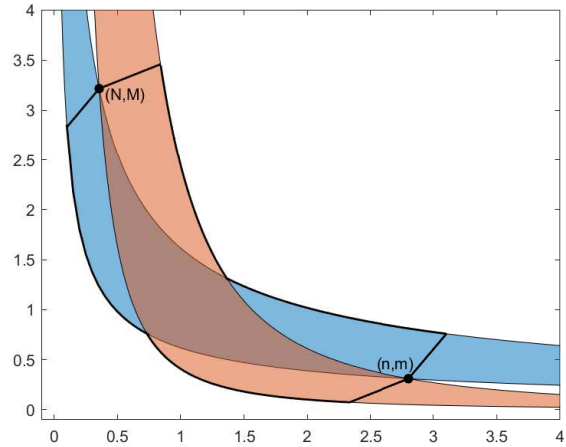}
\caption{The construction of $\mathcal{M}_{\mathcal{F},\delta}$ when the line generators of the fan have all negative slopes.}
\label{fig:two_neg}
\end{center}
\end{figure}

\item \emph{If at least one line generator of the fan has zero or infinite slope}: For example, if one of the line generators of the fan has zero slope, then the construction proceeds exactly as described in Section~\ref{sec:construction} until Step 4. In Step 5, we complete the boundary of $\mathcal{M}_{\mathcal{F},\delta}$ by a vertical line segment joining the polygonal paths $I_1$ and $I_2$ as shown in Figure~\ref{fig:horizontalcase}. Note that it is possible that the construction of $\mathcal{M}_{\mathcal{F},\delta}$ might make either the point $(N,M)$ or $(n,m)$ an interior point of $\mathcal{M}_{\mathcal{F},\delta}$.

\begin{figure}[h!]
\begin{center}
\includegraphics[scale=0.6]{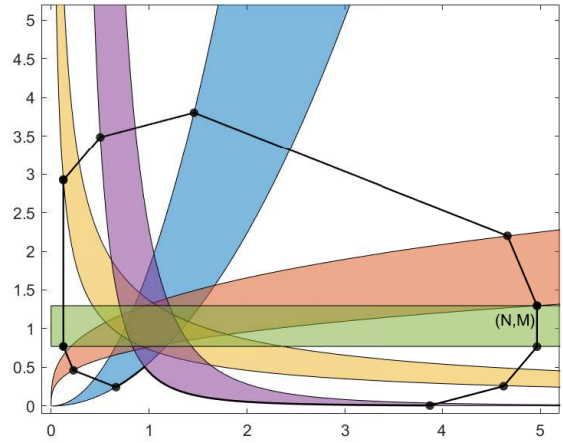}
\caption{The construction of $\mathcal{M}_{\mathcal{F},\delta}$ when a line generator of the fan has zero slope.}
\label{fig:horizontalcase}
\end{center}
\end{figure}

\end{enumerate}

\section{Discussion}\label{sec:discussion}

We have shown how to construct the minimal invariant region for a toric differential inclusion. Additionally, we have shown that the minimal invariant region is also the minimal globally attracting region for the toric differential inclusion. These results are very relevant in the study of mass-action systems and more generally polynomial dynamical systems, since it is known that weakly reversible and endotactic dynamical systems can be embedded into toric differential inclusions~\cite{craciun2019polynomial,craciun2019endotactic}, which is a key step towards the proposed proof of the global attractor conjecture~\cite{craciun2015toric}. It is notable that the structure of toric differential inclusions gives their solutions a greater degree of freedom as compared to the solutions of variable-$k$ mass-action systems. The minimal invariant regions constructed in this paper are also invariant regions for appropriate variable-$k$ mass-action systems. Further, the minimal globally attracting regions allow us to give {\em uniform upper and lower bounds} on the solutions of variable-$k$ mass action systems when $t \to \infty$. 

On the other hand, solutions of variable-$k$ mass-action systems are confined to a {\em proper subset} of the right-hand side of the corresponding toric differential inclusion, since the rate constants of the reactions cannot be switched off completely. It would be interesting to explore how to build minimal invariant and minimal globally attracting regions for variable-$k$ mass-action systems, which we plan to do in upcoming work.  
	 	
\section{Acknowledgements}	
A.D. acknowledges the Van Vleck Visiting Assistant Professorship from the Mathematics Department at University of Wisconsin Madison. The work of G.C. and Y.D. was supported in part by NSF grants DMS-1412643 and DMS-1816238. The work of G.C. was also supported by a Simons fellows grant. 
	 	
\bibliographystyle{amsplain}
\bibliography{Bibliography}

\providecommand{\bysame}{\leavevmode\hbox to3em{\hrulefill}\thinspace}
\providecommand{\MR}{\relax\ifhmode\unskip\space\fi MR }
% \MRhref is called by the amsart/book/proc definition of \MR.
\providecommand{\MRhref}[2]{%
  \href{http://www.ams.org/mathscinet-getitem?mr=#1}{#2}
}
\providecommand{\href}[2]{#2}
\begin{thebibliography}{10}

\bibitem{anderson2011proof}
David~F Anderson, \emph{A proof of the global attractor conjecture in the
  single linkage class case}, SIAM J. Appl. Math. \textbf{71} (2011), no.~4,
  1487--1508.

\bibitem{berman1994nonnegative}
A.~Berman and R.~Plemmons, \emph{Nonnegative matrices in the mathematical
  sciences}, SIAM, 1994.

\bibitem{blanchini1999set}
F.~Blanchini, \emph{Set invariance in control}, Automatica \textbf{35} (1999),
  no.~11, 1747--1767.

\bibitem{craciun2015toric}
G.~Craciun, \emph{Toric differential inclusions and a proof of the global
  attractor conjecture}, arXiv preprint arXiv:1501.02860 (2015).

\bibitem{craciun2019polynomial}
\bysame, \emph{Polynomial dynamical systems, reaction networks, and toric
  differential inclusions}, SIAGA \textbf{3} (2019), no.~1, 87--106.

\bibitem{craciun2019endotactic}
G.~Craciun and A.~Deshpande, \emph{Endotactic networks and toric differential
  inclusions}, arXiv preprint arXiv:1906.08384 (2019).

\bibitem{craciun2019quasi}
G~Craciun, A~Deshpande, and J.~Yeon, \emph{Quasi-toric differential
  inclusions}, arXiv preprint arXiv:1910.05426 (2019).

\bibitem{craciun2009toric}
G.~Craciun, A.~Dickenstein, A.~Shiu, and B.~Sturmfels, \emph{Toric dynamical
  systems}, J. Symb. Comput. \textbf{44} (2009), no.~11, 1551--1565.

\bibitem{craciun2013persistence}
G.~Craciun, F.~Nazarov, and C.~Pantea, \emph{Persistence and permanence of
  mass-action and power-law dynamical systems}, SIAM J. Appl. Math. \textbf{73}
  (2013), no.~1, 305--329.

\bibitem{feinberg1979lectures}
M.~Feinberg, \emph{Lectures on chemical reaction networks}, Notes of lectures
  given at the Mathematics Research Center, University of Wisconsin \textbf{49}
  (1979).

\bibitem{gopalkrishnan2014geometric}
M.~Gopalkrishnan, E.~Miller, and A.~Shiu, \emph{A geometric approach to the
  global attractor conjecture}, SIAM J. Appl. Dyn. Syst. \textbf{13} (2014),
  no.~2, 758--797.

\bibitem{guldberg1864studies}
C.~Guldberg and P.~Waage, \emph{Studies concerning affinity}, CM Forhandlinger:
  Videnskabs-Selskabet i Christiana \textbf{35} (1864), no.~1864, 1864.

\bibitem{gunawardena2003chemical}
Jeremy Gunawardena, \emph{Chemical reaction network theory for in-silico
  biologists}, Notes available for download at http://vcp. med. harvard.
  edu/papers/crnt. pdf (2003).

\bibitem{nagumo1942lage}
M.~Nagumo, \emph{{\"U}ber die lage der integralkurven gew{\"o}hnlicher
  differentialgleichungen}, Proceedings of the Physico-Mathematical Society of
  Japan. 3rd Series \textbf{24} (1942), 551--559.

\bibitem{pantea2012persistence}
C.~Pantea, \emph{On the persistence and global stability of mass-action
  systems}, SIAM J. Math. Anal. \textbf{44} (2012), no.~3, 1636--1673.

\bibitem{rockafellar2009variational}
R.~Rockafellar and R.~Wets, \emph{Variational analysis}, vol. 317, Springer
  Science \& Business Media, 2009.

\bibitem{voit2015150}
E.~Voit, H.~Martens, and S.~Omholt, \emph{150 years of the mass action law},
  PLOS Comput. Biol. \textbf{11} (2015), no.~1.

\bibitem{yu2018mathematical}
P.~Yu and G.~Craciun, \emph{Mathematical analysis of chemical reaction
  systems}, Isr. J. Chem. \textbf{58} (2018), no.~6-7, 733--741.

\end{thebibliography}

\end{document}